\newcommand*\bigcdot{\mathpalette\bigcdot@{.7}}
\newcommand*\bigcdot@[2]{\mathbin{\vcenter{\hbox{\scalebox{#2}{$\m@th#1\bullet$}}}}}
\newtheorem{thm}[subsection]{Theorem}
\newtheorem{defn}[subsection]{Definition}
\newtheorem{prop}[subsection]{Proposition}
\newtheorem{cor}[subsection]{Corollary}
\newtheorem{lemma}[subsection]{Lemma}
\theoremstyle{definition}  
\newtheorem{example}[subsection]{Example}
\newtheorem{remark}[subsection]{Remark}
\newcommand{\dfn}{\textbf} 
\newcommand{\mdfn}[1]{\dfn{\mathversion{bold}#1}} 
\newcommand{\Smash}             {\wedge}
\newcommand{\Wedge}             {\vee}
\newcommand{\tens}              {\otimes}               
\newcommand{\iso}               {\cong}
\newcommand{\cat}{\EuScript}    
\newcommand{\cA}{{\cat A}}      
\newcommand{\cC}{{\cat C}}
\newcommand{\cD}{{\cat D}}
\newcommand{\cI}{{\cat I}}
\newcommand{\cJ}{{\cat J}}
\newcommand{\cK}{{\cat K}}
\newcommand{\cL}{{\cat L}}
\newcommand{\cO}{{\cat O}}
\newcommand{\cP}{{\cat P}}
\newcommand{\cS}{{\cat S}}
\newcommand{\Mack}{{\cat Mack}}
\newcommand{\Mod}{\text{Mod}}
\newcommand{\Ho}{{\cat Ho}}
\renewcommand{\cA}{{\mathcal A}}
\renewcommand{\cC}{{\mathcal C}}
\renewcommand{\cI}{{\mathcal I}}
\newcommand{\field}[1]  {\mathbb #1} 
\newcommand{\F}         {\field F}
\newcommand{\R}         {\field R}
\newcommand{\M}         {\field M}
\newcommand{\Z}         {\field Z}
\newcommand{\PF}        {\field P}
\DeclareMathOperator*{\Cof}{Cof}
\DeclareMathOperator*{\Ann}{Ann}
\DeclareMathOperator*{\colim}{colim}
\DeclareMathOperator{\Hom}{Hom}
\DeclareMathOperator{\uHom}{\underline{Hom}}
\DeclareMathOperator{\uExt}{\underline{Ext}}
\DeclareMathOperator{\uTor}{\underline{Tor}}
\DeclareMathOperator{\tr}{tr}
\DeclareMathOperator{\Spec}{Spec}
\DeclareMathOperator{\Supp}{Supp}
\newcommand{\ra}{\rightarrow}                   
\newcommand{\lra}{\longrightarrow}              
\newcommand{\llra}[1]{\stackrel{#1}{\lra}}      
\newcommand{\inc}{\hookrightarrow}              
\newcommand{\dbra}{\rightrightarrows}           
\newcommand{\blank}{-}                          
\newcommand{\id}{id}                            
\newcommand{\und}{\underline}
\newcommand{\period}    {{\makebox[0pt][l]{\hspace{2pt} .}}}
\newcommand{\semicolon} {{\makebox[0pt][l]{\hspace{2pt} ;}}}
\newcommand{\bdot}{\bullet}
\newcommand{\he}{\simeq}
\numberwithin{equation}{section}
\newcommand{\Ch}{Ch}
\newcommand{\cR}{{\cat R}}
\newcommand{\Ab}{\cat Ab}
\newcommand{\ev}{ev}
\newenvironment{bsmallmatrix}
  {\left [\begin{smallmatrix}}
  {\end{smallmatrix}\right ] }
\newcommand{\cpdot}{\bigcdot}
\newcommand{\Sth}{S_{\Theta}}
\newcommand{\Sd}{S_{\cpdot}}
\newcommand{\bbox}{\Box}
\newcommand{\tht}{_{\Theta}}
\newcommand{\dt}{_{\cpdot}}
\newcommand{\chk}{\vee}
\newcommand{\op}{{op}}
\newcommand{\MMack}[2]{\xymatrix{
{#1} \ar@(ul,dl)[] \ar@<0.5ex>[r] & {#2}\ar@<0.5ex>[l]}}
\newcommand{\cotens}{\mathcal{F}}
\newcommand{\dcotens}{\mathcal{F}\!\!\!\!\mathcal{F}}
\newcommand{\dbox}{\bbox \!\!\!\!\bbox}  
\newcommand{\bD}{\mathbb{D}}
\newcommand{\MMod}{\!-\!\Mod}
\newcommand{\pars}[1]{\left({#1}\right)} 
\DeclareMathOperator{\tp}{top} 
\DeclareMathOperator{\wt}{wt} 
\DeclareMathOperator{\cowt}{cowt} 
\DeclareMathOperator{\perf}{perf}
\DeclareMathOperator{\Rep}{Rep}
\DeclareMathOperator{\Pic}{Pic}
\newcommand{\undl}{\underline{\smash{\Z/\ell}}}
\newcommand{\undt}{\underline{\smash{\Z/2}}}
\newcommand{\undr}[1]{\underline{\smash{\Z/#1}}}
\newcommand{\bSigma}{\overline{\Sigma}}
\begin{document}

\title{Equivariant $\undl$-modules for the cyclic group $C_2$}

\author{Daniel Dugger}
\author{Christy Hazel}
\author{Clover May}

\address{Department of Mathematics\\University of Oregon\\Eugene, OR
  97403}
\email{ddugger@uoregon.edu}

\address{Department of Mathematics\\UCLA\\ Los Angeles, CA 90095}
\email{chazel@math.ucla.edu}

\address{Department of Mathematical Sciences\\NTNU\\ 7491 Trondheim, Norway}
\email{clover.may@ntnu.no}

\begin{abstract}
For the cyclic group $C_2$ we give a complete description of the
derived category of perfect complexes of modules over the constant
Mackey ring $\undl$, for $\ell$ a prime.  This is fairly simple for
$\ell$ odd, but for $\ell=2$ depends on a new splitting theorem.  As
corollaries of the
splitting theorem we compute the associated Picard group and
the Balmer spectrum for compact objects in the derived category, and
we  obtain a complete classification of finite modules over the
$C_2$-equivariant Eilenberg--MacLane spectrum $H\undt$.
We also use the splitting theorem to give new and illuminating proofs of some
facts about $RO(C_2)$-graded Bredon cohomology, namely Kronholm's
freeness theorem \cite{K, HM} and the structure theorem of C. May
\cite{M}.
\end{abstract}

\maketitle

\tableofcontents

\section{Introduction}

Mackey functors were first defined by Dress and Green in the early
1970s \cite{D1,D2,Gr}.  They
have proven to be important objects in equivariant homotopy theory and
representation theory.  For $G$ a finite
group, the category of $G$-Mackey functors is abelian and symmetric
monoidal via the box product.  We call a monoid $R$ in
this category a {\it Mackey ring\/} (note that commutative Mackey rings are
also called Green functors in the literature).  
From $R$ one obtains an abelian category of $R$-modules, a
corresponding category of chain complexes of $R$-modules, and an
associated derived category $\cD(R)$.

The derived category of a Mackey ring is our main object of study in
this paper, though our motivation comes from topology.  In classical
topology, if $T$ is a ring then $\cD(T)$ is the natural homotopical recipient for
singular homology with $T$-coefficients. In the $G$-equivariant
setting, if $R$ is a Mackey ring then $\cD(R)$ is the analogous
recipient for ordinary Bredon homology with $R$-coefficients.  For this
reason it is natural to investigate $\cD(R)$ and try to understand as
much as we can about it.

This paper specializes to the case where $G=C_2$, the cyclic group of order two, and $R=\undl$, the constant-coefficient Mackey functor for the ring
$\Z/\ell$, with $\ell$ a prime.  In this case we completely describe the
full subcategory $\cD(\undl)_{\perf}$ consisting of perfect
complexes. Here a ``perfect complex'' is a bounded chain complex of
finitely-generated projective modules.  There are only two
indecomposable, finitely-generated projectives, called $H$ and $F$ in
the $\ell=2$ case and $H$ and $S\tht$ in the $\ell>2$ case (see Section~\ref{se:background}).  Topologically, $H$ and $F$ correspond to fixed and free
$C_2$-equivariant cells, respectively, and when $\ell>2$ one has $F\iso
H\oplus S\tht$.  

 When $\ell$ is odd, the category of
$\undl$-modules is semisimple and hence $\cD(\undl)_{\perf}$ is
easy to understand.  Every perfect complex decomposes---as a chain
complex, not just an object in the
derived category---as a direct sum of
suspensions of $H$ and $S\tht$ (regarded as complexes concentrated in
degree 0) as well as ``disks'' on $H$ and $S\tht$ (chain complexes
consisting of exactly one identity map)---see Proposition~\ref{pr:odd-splitting}.
But the case of real interest is $\ell=2$.  Here we are again able to describe all perfect complexes, but the result and techniques used are more complicated.  For this case we define certain families of
complexes $A_k$, $B_r$ (for $k,r\geq 0$),
and $H(n)$ (for $n\in \Z$). These are simple ``linear strands''
consisting of a single $H$ or $F$ in each degree;
see Section~\ref{se:complexes} for the precise definitions.  Our main classification result is the following:

\begin{thm}
\label{th:intro-splitting}
Every perfect complex of $\undt$-modules is isomorphic to a finite
direct sum of suspensions of complexes of type $A_k$, $B_r$, and $H(n)$, together
with suspensions of elementary contractible disk-complexes $\bD(P)$, where $P$ is a
projective and $\bD(P)$ denotes the complex $0\lra P \llra{\id} P\lra 0$.
\end{thm}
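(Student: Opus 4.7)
The plan is to reduce the classification to a Krull--Schmidt / string-algebra style analysis of the differentials. First I would compute the relevant Hom-modules $\Hom(H,H)$, $\Hom(H,F)$, $\Hom(F,H)$, and $\Hom(F,F)$ in the abelian category of $\undt$-modules, pin down their Jacobson radicals (the non-iso morphisms), and record the composition rules. Modulo its radical, the endomorphism ring of $H\oplus F$ is a product of copies of $\F_2$, so Krull--Schmidt holds for any finite direct sum of shifts of $H$ and $F$. This Hom computation should already reveal that $A_k$, $B_r$, and $H(n)$ are essentially the only ``linear strand'' shapes allowed by the combinatorics of non-invertible morphisms between $H$'s and $F$'s, i.e.\ the only ways to compose radical maps into a chain with $d^2=0$.

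Second, given a perfect complex $C_\bullet$, I would perform iterated Gauss elimination: whenever a matrix entry of a differential $d\colon C_n\to C_{n-1}$ between summands is an isomorphism ($H\to H$ or $F\to F$), split off a contractible summand $\bD(H)$ or $\bD(F)$ and continue. Because $C_\bullet$ is bounded and each $C_n$ is finitely generated, finitely many reductions produce a decomposition $C_\bullet\iso C'_\bullet \oplus \bigoplus_i \uSigma^{n_i}\bD(P_i)$ in which $C'_\bullet$ is \emph{minimal}: every entry of every differential is a radical map. The task then reduces to showing every minimal perfect complex of $\undt$-modules is a direct sum of shifts of $A_k$, $B_r$, and $H(n)$.

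Third, to decompose a minimal complex I would argue by induction on total rank. Fix a generator $x$ of minimal degree in some $C'_n$, and examine the components of $d$ hitting it from $C'_{n+1}$ and the components of $d$ it maps to in $C'_{n-1}$. Using the multiplication table of the radical computed in step~1, together with $d^2=0$, one sees that the ``tree'' of generators reachable from $x$ by radical matrix entries can only form a single linear chain of one of the three shapes in the list; any branching would force either an invertible composition (contradicting minimality) or a nonzero $d^2$. Splitting off this strand and iterating finishes the proof. The main obstacle is this last step: showing rigorously that minimality plus $d^2=0$ forces the differentials to organize into disjoint one-dimensional strings rather than higher-rank patterns. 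This is precisely where the specific module structure of $\undt$ is doing the work --- in more general settings the indecomposables include ``band'' complexes, and one must verify that no such bands appear here, presumably because the radical of $\End(H\oplus F)$ is nilpotent of small enough nilpotency class to rule them out.
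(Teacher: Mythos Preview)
Your general framework is reasonable and broadly parallel to the paper's approach in spirit, but your Step~3 contains a false claim that constitutes a genuine gap.

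You write: ``any branching would force either an invertible composition (contradicting minimality) or a nonzero $d^2$.'' This is not true. Consider the minimal complex
\[
F \xrightarrow{\ \begin{bsmallmatrix}1+t\\1+t\end{bsmallmatrix}\ } F\oplus F \xrightarrow{\ \begin{bsmallmatrix}p^* & p^*\end{bsmallmatrix}\ } H.
\]
All entries are radical, $d^2=0$ (since $p^*(1+t)=0$), and there is branching both into and out of the middle degree. Neither minimality nor $d^2=0$ is violated. What actually happens is that a change of basis in the middle degree (replace $b$ by $b+c$) resolves the branching and splits the complex as $\Sigma H(1)\oplus \Sigma A_1$. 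So the content of the theorem is not that branching is \emph{forbidden} in minimal complexes, but that it can always be \emph{undone} by basis changes---and proving this is exactly the work.

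The paper's proof does this work by a different route than you suggest. Rather than reducing to a minimal complex and then analyzing its quiver, it inducts on the top nonzero degree $m$: assuming $T_{m-1}(C)$ is already split into strands, it describes an explicit five-step algorithm to extend the splitting to $T_m(C)$. The steps are ordered carefully (split off disks; then $B_r$'s from $H$'s hitting the \emph{shortest} strand ending in $H$; then $H(-n)$'s from $H$'s hitting the \emph{longest} strand of $F$'s; then $H(n)$'s and $A_k$'s similarly from $F$'s), and the shortest/longest choices are precisely what guarantees the basis changes terminate cleanly at each level. This ordering is the substitute for the ``no bands'' assertion in your sketch, and it is where the specific composition relations $p^*(1+t)=0$, $(1+t)p_*=0$, $(1+t)^2=0$, $p^*p_*=0$ are actually used.

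Your string-algebra intuition is sound---via Proposition~\ref{pr:K-category} the problem is equivalent to classifying bounded complexes of finitely-generated $\F_2[C_2]\cong\F_2[y]/y^2$-modules up to chain homotopy, and one can indeed push through a gentle-algebra style classification---but to complete it you would need to replace the incorrect branching claim with an honest change-of-basis argument, at which point you are essentially reconstructing the paper's algorithm.
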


We actually prove more than is stated in the above theorem. We
give an algorithm for
taking any perfect complex and splitting it into the above form.
We also prove a similar splitting theorem for bounded below complexes of
finitely-generated modules.  The bounded below case requires two more types of
complexes (allowing $k=\infty$ and $r=\infty$); see Theorem~\ref{th:bd-below-splitting}.

We call each of the special complexes in the splitting theorems
``strands'' because they take a particularly nice form: they are
nonzero for a finite string of degrees, and in each of these degrees
they have a single summand that is either $H$ or $F$. A similar splitting
phenomenon occurs for the classical derived category of abelian
groups: every perfect complex  splits as a direct sum of
shifts of the strands $0 \lra \Z \lra 0$ and $0 \lra \Z \llra{n} \Z
\lra 0$.  In some ways our main argument is similar to what happens in
this classical example, in that we can do concrete change-of-bases to produce the
splitting---but the bookkeeping is more difficult and comes with several
subtleties.  
Note that there is something special about $G=C_2$ here; for
$G=C_p$ with $p$ an odd prime, there are
perfect complexes that cannot decompose into strands in
$\cD(\undr{p})$. We discuss this in Remark \ref{re:strand}.

Returning to $G=C_2$, it follows
from Theorem~\ref{th:intro-splitting} that we can understand
$\cD(\undt)_{\perf}$ by studying all maps between the $A_k$,
$B_r$, and $H(n)$ complexes.  The derived category
$\cD(\undt)$ also has a monoidal product $\bbox$, analogous to the tensor
product on the derived category of an ordinary commutative ring.  We completely calculate all maps and products in $\cD(\undt)_{\perf}$. We also study several other aspects of this derived category such as
the Picard group, the Balmer spectrum, and a certain kind of duality.
We find the Picard group of $\cD(\undt)$ is $\Z^2$ (see
Theorem~\ref{th:Picard}) and the Balmer spectrum for
$\cD(\undt)_{\perf}$ consists of three points, two closed and one generic (see Theorem~\ref{th:Balmer-spectra}).

\begin{remark}
\label{re:alggeom}
The reader will have noticed the different notation of indices for the
$H$-family as opposed to the $A$- and $B$-families.  In the derived category, the $H(n)$
are invertible with $H(n)\bbox H(m)\he H(n+m)$.  The derived category
$\cD(\undt)$ in many ways behaves like the derived category of a
projective curve having exactly two closed points $x$ and $y$.  The
$H(n)$ are analogs of the canonical sheaves $\cO(n)$, and the $A_k$ and $B_k$ are analogs of the $k$th order
thickenings of the structure sheaves for the points $x$ and $y$.  For
further analogies with algebraic geometry see  Remark~\ref{re:M2-ag} and
Theorem~\ref{th:Serre-duality}.
\end{remark}

\medskip

While the main work of this paper is entirely algebraic, our
motivation comes from equivariant homotopy theory.
For every Mackey functor $M$, there is a corresponding
genuine equivariant spectrum $HM$ called the Eilenberg--MacLane (or
Bredon) spectrum.  If $A$ is an abelian group
then we can take $M=\und{A}$, the constant-coefficient Mackey functor,
and if $A$ is a ring then $H\und{A}$ is a ring spectrum.

In the equivariant setting, homotopy and (co)homology are often graded on the representation ring $RO(G)$. For $G=C_2$, this gives us bigraded homotopy and (co)homology.  Unlike the classical setting, the $RO(G)$-graded homotopy of $H\und{A}$ is typically not concentrated in a single degree.  This can make computations significantly more challenging.  In general, the homotopy category of $H\und{A}$-modules is not well understood.  However, we can transform this into a problem in algebra.

The homotopy category of $H\und{A}$-modules is equivalent to the
derived category $\cD(\und{A})$ (for $A=\Z$ this was explained in
\cite{Z}).  We use this equivalence and our classification of perfect
complexes in $\cD(\undl)$ to give a classification of finite
$H\undl$-modules.  For $\ell$ odd, every finite $H\undl$-module
decomposes as a wedge of suspensions of $H\undl$.  Here the
suspensions are by (possibly virtual) representation spheres.  Our
description of perfect complexes is not actually required here:  this
decomposition follows from the fact that the $RO(C_2)$-graded homotopy
of $H\undl$ is a graded field, and graded modules over a graded field
decompose nicely.  As before, $\ell = 2$ is the much more interesting
case.  The $RO(C_2)$-graded homotopy of $H\undt$ is a non-Noetherian
ring and has a complicated module theory.

In \cite{M} it was shown that 
if $X$ is a finite $C_2$-CW spectrum then
$H\undt\Smash X$
splits into a wedge of suspensions of $H\undt$ and $H\undt \Smash (S^k_a)_+$, where $S^k_a$ is the $k$-dimensional sphere with the antipodal action and suspensions are by (virtual) representation spheres.
Using the splitting of perfect complexes in $\cD(\undt)$, we prove a generalization.  This generalization requires a third type of $H\undt$-module given by ``cofibers of powers of $\tau$'', where
$\tau$ is a certain familiar element in the
homotopy of $H\undt$ (see Section~\ref{se:distinguish} for a precise definition).

\begin{thm}
\label{th:intro-topsplitting}
Every finite $H\undt$-module is equivalent to a wedge of
$RO(C_2)$-graded suspensions of $H\undt$, $H\undt \Smash (S^k_a)_+$, and $\Cof(\tau^r)$ for various $k \geq 0$ and $r \geq 1$.
\end{thm}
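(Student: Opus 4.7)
The plan is to transport the algebraic splitting of Theorem~\ref{th:intro-splitting} across the equivalence $\ho(H\undt\text{-Mod})\simeq \cD(\undt)$ mentioned in the introduction (an analogue for $\undt$ of the result in~\cite{Z} for $\Z$). This equivalence restricts to one between finite $H\undt$-modules and $\cD(\undt)_{\perf}$. Given a finite $H\undt$-module $X$, let $C_X$ denote its image in $\cD(\undt)_{\perf}$. By Theorem~\ref{th:intro-splitting}, $C_X$ splits as a finite direct sum of suspensions of strand complexes of types $A_k$, $B_r$, and $H(n)$, together with suspensions of disks $\bD(P)$. Each $\bD(P)$ admits the identity as an explicit contracting homotopy, hence represents the zero object, and the corresponding summand of $X$ is contractible and can be discarded.

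It remains to identify the three surviving families of strands with topological modules. First, each $H(n)$ lies in the Picard group of $\cD(\undt)$, so under the equivalence it corresponds to an invertible $H\undt$-module; such a module must be a suspension of $H\undt$ by a (virtual) representation sphere. Second, $H\undt\Smash (S^k_a)_+$ is realized by the free $C_2$-CW structure on $S^k_a$, which has cells only of free type; computing the associated Bredon chain complex presents the algebraic model as a strand of $F$'s with the appropriate differentials, and one checks this matches a suspension of either $A_k$ or $B_k$. Third, $\Cof(\tau^r)$ is by construction the cofiber of the $r$-fold self-map $\tau^r$ of a suspension of $H\undt$; transporting this cofiber sequence into $\cD(\undt)$ and using a concrete chain-level representative of $\tau$ yields the remaining family of strands, mixing $H$'s and $F$'s.

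The last step is to absorb suspensions. On the algebraic side the suspensions appearing in Theorem~\ref{th:intro-splitting} are integer shifts together, implicitly, with twists by elements of the Picard group of $\cD(\undt)$; by Theorem~\ref{th:Picard} this Picard group is $\Z^2$, generated by the shift and the $H(n)$'s. On the topological side these translate precisely into suspensions by (virtual) representation spheres. Thus every summand of $C_X$ corresponds, under the equivalence, to a representation-sphere suspension of one of $H\undt$, $H\undt\Smash (S^k_a)_+$, or $\Cof(\tau^r)$, giving the desired wedge decomposition of $X$.

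The main obstacle is the explicit matching in the middle step, namely pinning down which of the families $A_k$, $B_r$ corresponds to $(S^k_a)_+$ and which to $\Cof(\tau^r)$, and verifying that the indexing conventions line up. The $(S^k_a)_+$ side is controlled by the free CW-structure and a differential of the form $1-\gamma$; the $\Cof(\tau^r)$ side requires a sufficiently explicit chain-level model of $\tau$ so that iterated cofibers of $\tau^r$ can be recognized as strand complexes. Once these two identifications are in hand, everything else is formal.
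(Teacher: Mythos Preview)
Your overall strategy---transport the algebraic splitting across the Quillen equivalence $\ho(H\undt\text{-Mod})\simeq \cD(\undt)$ and then identify each family of strands topologically---is exactly the paper's approach.  You have also correctly located the only nontrivial step: matching $A_k$, $B_r$, and $H(n)$ with the three topological families.

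Where your proposal remains incomplete, the paper closes the gap using Corollary~\ref{co:homology-determines}: two perfect complexes are quasi-isomorphic if and only if their bigraded cohomology $H^{*,*}(\blank)$ agree as $\M_2$-modules.  Since the Quillen equivalence carries $H\undt\Smash S^{p,q}$ to $\Sigma^p H(q)$ (checked once from the cell structure of $S^{p,q}$), bigraded cohomology computed on either side matches.  So to pin down which strand a given $H\undt$-module corresponds to, one need only compute its bigraded cohomology and compare with Proposition~\ref{pr:cohom-rings}.  Concretely: the free CW structure on $S^k_a$ gives a complex of $F$'s with differential $1+t$, i.e.\ $A_k$ on the nose (your cell-structure suggestion works and there is no ambiguity here); and computing $H^{*,*}(\Cof(\tau^r))$ directly from the cofiber sequence $H\undt \to \Sigma^{0,r}H\undt \to \Cof(\tau^r)$ yields $\M_2[\rho^{-1}]/(\tau^r)$, which matches $H^{*,*}(B_{r-1})$.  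This avoids having to chase $\tau^r$ explicitly through the equivalence.

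Two small points.  First, your Picard-group argument for $H(n)$ is more circuitous than needed and tacitly assumes that invertible $H\undt$-modules are exhausted by representation-sphere suspensions; the paper instead identifies $\Sigma^p H(q)$ with $H\undt\Smash S^{p,q}$ directly from the cell decomposition of $S^{p,q}$.  Second, the suspensions in Theorem~\ref{th:intro-splitting} are purely integer shifts $\Sigma^p$, not Picard twists; the weight direction is already absorbed into the parameter $n$ of $H(n)$, and for $A_k$ and $B_r$ the identities $A_k\dbox H(n)\simeq A_k$ and $B_r\dbox H(n)\simeq \Sigma^{-n}B_r$ show that weight-twisting gives nothing new.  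So your third paragraph is not needed.
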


This follows from the splitting theorem by identifying the complexes $H(n)$ with weight $n$ representation spheres (smashed with $H\undt$), the complexes $A_k$ with the antipodal spheres (again, smashed with $H\undt$), and finally the complexes $B_{r-1}$ with $\Cof(\tau^{r})$.

While this splitting theorem might appear a bit asymmetric with the appearance of $\Cof(\tau^r)$, we can reinterpret the splitting by identifying $H\undt \Smash (S^k_a)_+$ with a desuspension of $\Cof(\rho^{k+1})$.  Here $\rho$ is another familiar element in the homotopy of $H\undt$.
Then we observe every finite $H\undt$-module splits as a wedge of suspensions of the unit $H\undt$, $\Cof(\rho^{k})$, and $\Cof(\tau^r)$ for $k, r \geq 1$.

\medskip

The splitting result of Theorem~\ref{th:intro-splitting} has some
other nice
consequences.  One is that it leads to a new proof of the
topological splitting theorem from \cite{M}.  We also use it to give a
new perspective on an important result of Kronholm.
Kronholm \cite{K} observed that for a $C_2$-space built
from a finite number of representation cells (called a finite
$\Rep(C_2)$-complex), the $RO(C_2)$-graded Bredon cohomology is always
free as a module over the Bredon cohomology of a point.  The proof of
this ``freeness theorem'' is subtle, and a small but significant gap
was discovered and repaired by Hogle and C. May \cite{HM}, who also
extended the result to the finite-type case.  We use
Theorem~\ref{th:intro-splitting} to give another proof of this
freeness theorem, which again has some subtleties but also offers some
enlightening perspectives.  In particular, one notable aspect of
Kronholm's freeness theorem is the phenomenon now known as ``Kronholm
shifting'', whereby cells from the original decomposition appear to
shift weights in terms of how they contribute to the cohomology of the space.  The algebraic splitting algorithm underlying
Theorem~\ref{th:intro-splitting} gives a concrete perspective on
these Kronholm shifts.

\begin{remark}
\label{re:wilson}
In a private communication, Dylan Wilson shared a perspective on how the
derived category versions of some of our results can be understood via
the Tate square from equivariant homotopy theory.  We briefly include a rough outline since this might
serve as a useful
guidepost to the reader, though some of the details are
outside the scope of this paper.

The
Tate square for $H\undt$ takes the form
\[ \xymatrix{H\undt \ar[r]\ar[d] & H\undt[\rho^{-1}] \ar[d] \\
H\undt[\tau^{-1}] \ar[r] & H\undt[\tau^{-1},\rho^{-1}].
}
\]
Isomorphism classes of modules over $H\undt$ can be seen to correspond to
isomorphism classes of triples $(M,N,\phi)$ where $M$ is a module over
$H\undt[\tau^{-1}]$, $N$ is a module over $H\undt[\rho^{-1}]$, and
$\phi$ is an isomorphism $M[\rho^{-1}]\ra N[\tau^{-1}]$ (according to Wilson there is an appropriate $\infty$-categorical equivalence here).

The bigraded homotopy rings of the equivariant ring spectra $H\undt[\tau^{-1}]$ and
$H\undt[\rho^{-1}]$ are very simple: they are
$\Z/2[\tau^{\pm 1},\rho]$ and $\Z/2[\tau,\rho^{\pm 1}]$, respectively.
In particular, these are bigraded PIDs and this 
implies that the module theory of these ring spectra is formal---that
is, the module theory is the same as
the (bigraded) module theory of their bigraded homotopy rings (using that bigraded homotopy detects weak equivalences in the
$C_2$-equivariant setting).  By algebra,
the 
graded module theory of these rings is the same as that of the
singly-graded rings $\Z/2[x_1]$ and $\Z/2[x_2]$ where
$x_1=\frac{\rho}{\tau}$ and $x_2=\frac{\tau}{\rho}$.  One is therefore led
to consider triples $(M_1,M_2,\phi)$ where $M_i$ is a graded
$\Z/2[x_i]$-module and $\phi$ is an isomorphism $M_1[x_1^{-1}]\ra
M_2[x_2^{-1}]$.  Such tuples can be seen to break up into sums of three types:
$(\Z/2[x_1]/(x_1^r),0,0)$, $(0,\Z/2[x_2]/(x_2^s),0)$, and the free case
$(\Sigma^{k_1}\Z/2[x_1],\Sigma^{k_2}\Z/2[x_2],1\mapsto x_2^{k_2-k_1})$.
These are the analogs of our $A$-, $B$-, and $H(n)$-familes,
respectively (the third tuple listed above corresponds to
$\Sigma^{k_1}H(k_1-k_2)$).    

One can find shades of the above argument sprinkled amongst our
methods throughout the paper, though our perspective is
more rigid and algebraic.  But in particular we want to accentuate the
viewpoint that the module theory of $H\undt$-modules (or equivalently,
the homological algebra of $\undt$-modules) is almost like
that of a PID.  Note also that the above perspective shows the 
connection between finite-type $H\undt$-modules and coherent sheaves over the projective
line $\PF^1_{\F_2}$, which was foreshadowed in Remark~\ref{re:alggeom}
and also appears later in Section~\ref{se:complexes}.  
\end{remark}

\medskip

\subsection{Organization of the paper}
Section 2 gives background on the category of Mackey functors and its
closed symmetric monoidal structure, and sets up much of our notation.  In Section 3 we focus on
understanding $\undr{n}$-modules, and find that there are only a few
finitely-generated indecomposables.  The main results are in Section
4, which contains a thorough investigation of the derived category of
$\undt$-modules together with our main splitting theorem.  Consequences of this
work are then developed in Sections 5 and 6, whereas Section 7 gives
the proof of the splitting theorem.  Finally, Section 8 uses techniques from the proof of the
splitting theorem to prove an algebraic version of Kronholm's freeness
theorem.

\subsection{Notation and terminology}
If $\cC$ is a category then we write $\cC(A,B)$ for $\Hom_\cC(A,B)$.
The cyclic group with two elements is denoted $C_2$ when it is the
background group acting on spaces, and denoted $\Z/2$ when it plays
the role of a coefficient system.

\subsection{Acknowledgements} Many thanks to the anonymous referee for their careful reading, helpful feedback, and for suggesting the short proof of the freeness theorem via localization. The third author would also like to thank Anna Marie Bohmann, Drew Heard, Mike Hill, Eric Hogle, and Mingcong Zeng for a number of helpful conversations.


\section{Background on Mackey functors}
\label{se:background}

In this section we review the basic definitions and structures on
Mackey functors.  We then introduce the Mackey rings $\undr{n}$.
The main objects of study throughout
the paper will be $\undr{n}$-modules.  More information about Mackey functors and their
homological algebra
can be found in
papers such as \cite{W}, \cite{G}, \cite{Bc}, \cite{BSW}, and \cite{Z}.

\medskip

A {\bf{Mackey functor}} for the group $C_2$ is a diagram of abelian groups
\[ \xymatrix{
*{\phantom{--} M\tht\ } \ar@<0.5ex>[r]^{\ \ p_*}  \ar@(ul,dl)[]_t
& M\dt \ar@<0.5ex>[l]^{\ \ p^*}
}
\]
where the maps satisfy the formulas $tp^*=p^*$, $p_*t=p_*$, $t^2=\id$,
and $p^*p_*=1+t$.  There are other ways of defining Mackey functors
as certain additive functors defined on all finite $C_2$-sets.  This
`coordinate-free' approach is more convenient for some purposes.  If
$\cpdot$ denotes the trivial $C_2$-set with one element and $\Theta$
denotes $C_2$ regarded as a $C_2$-set via left multiplication, then every
$C_2$-set is a disjoint union of copies of $\cpdot$ and $\Theta$.  So
an additive functor $M$ on all $C_2$-sets is completely determined by the
data in the above diagram.

 Write $\Mack_{C_2}$ for the category of
Mackey functors for the group $C_2$, and note that this is an abelian
category.

\begin{remark}
It is often convenient to denote elements of $M\tht$ by $x\tht$,
and elements of $M\dt$ by $x\dt$.  In short, we use subscripts to
remind us what part of the Mackey functor elements live in.
\end{remark}

\subsection{Free Mackey functors}
Consider the evaluation functors $\ev\dt\colon \Mack_{C_2} \ra \Ab$
and $\ev\tht\colon \Mack_{C_2}\ra \Ab$ sending $M\mapsto M\dt$ and
$M\mapsto M\tht$, respectively.  These have left adjoints, which
will be denoted $F\dt$ and $F\tht$, respectively.  If $A$ is an
abelian group it is easy to
compute that $F\dt (A)$ has $(F\dt (A))\tht=A$ and $(F\dt (A))\dt=A\oplus A$,
where $t=\id$, the map $p_*$ is the inclusion of the right factor, and $p^*$ is
the identity on the left factor and multiplication-by-two on the right
factor.  We can use the symbols $p_*$ and $p^*$ as placekeepers and write
\[ F\dt (A):\qquad
\xymatrixcolsep{3pc}
\xymatrix{
 *{\phantom{--} p^*A\ }\ar@(ul,dl)[]_{\id} \ar@<0.5ex>[r]^-{\begin{bsmallmatrix} 0 & 1 \end{bsmallmatrix}}
&  A\oplus p_*p^*(A). \ar@<0.5ex>[l]^-{\begin{bsmallmatrix} 1 \\ 2 \end{bsmallmatrix}}
}
\]

Similarly, $F\tht (A)$ can be computed to have $(F\tht (A))\tht=A\oplus
A$ and $(F\tht (A))\dt=A$, where $t$ switches the two summands, $p_*$ is
the fold map, and $p^*$ is the diagonal.  We can write
\[ F\tht (A):\qquad
\xymatrixcolsep{3pc}\xymatrix{
 *{\phantom{---}A\oplus tA\ } \ar@(ul,dl)[]_{\begin{bsmallmatrix} 0 & 1\\ 1 &
   0\end{bsmallmatrix}} \ar@<0.5ex>[r]^-{\begin{bsmallmatrix} 1 & 1 \end{bsmallmatrix}}
&  p_*A. \ar@<0.5ex>[l]^-{\begin{bsmallmatrix} 1 \\ 1\end{bsmallmatrix}}
}
\]

Of particular importance are the Mackey functors $F\tht(\Z)$ and
$F\dt(\Z)$, which play the role of free objects in our category.
It can be useful to write these in terms of chosen generators $g$:
\[ F\dt(\Z): \qquad
\xymatrixcolsep{3pc}\xymatrix{
  *{\phantom{---}\Z\langle p^*g\dt\rangle\, } \ar@(ul,dl)[]_{\id}
\ar@<0.5ex>[r]^-{\begin{bsmallmatrix} 0 & 1 \end{bsmallmatrix}}
&  \Z\langle g\dt\rangle \oplus \Z\langle p_*p^*g\dt\rangle,
\ar@<0.5ex>[l]^-{\begin{bsmallmatrix} 1 \\ 2 \end{bsmallmatrix}}
}
\]
\[ F\tht(\Z):\qquad
\xymatrixcolsep{3pc}\xymatrix{
 *{\phantom{------.}\Z\langle g\tht\rangle \oplus \Z\langle tg\tht\rangle\,
} \ar@(ul,dl)[]_{\begin{bsmallmatrix} 0 & 1\\ 1 &
   0\end{bsmallmatrix}} \ar@<0.5ex>[r]^-{\begin{bsmallmatrix} 1 & 1 \end{bsmallmatrix}}
&  \Z\langle p_*g\tht\rangle.
\ar@<0.5ex>[l]^-{\begin{bsmallmatrix} 1 \\ 1\end{bsmallmatrix}}
}
\]

\noindent
Note
that giving a map $F\tht(\Z)\ra M$ is equivalent to specifying
the image of $g\tht$ in $M\tht$ and likewise giving a map $F\dt(\Z)\ra M$
is equivalent to specifying the image of $g\dt$ in $M\dt$.

The Mackey functor $F\dt(\Z)$ coincides with what is usually called the
{\it Burnside Mackey
functor\/} and is frequently denoted as $\cA$.

\medskip

\subsection{Constant Mackey functors}
Let $A$ be an abelian group.   There is a Mackey functor $\und{A}$
where $\und{A}\tht=\und{A}\dt=A$,  $p^*=t=\id$, and $p_*$ is
multiplication by $2$.  This is called the \dfn{constant coefficient
  Mackey functor} with value $A$ and has diagram:
  \[ \und{A}: \qquad
  \xymatrix{
  A \ar@<0.5ex>[r]^{2}  \ar@(ul,dl)[]_{\id}
  & A. \ar@<0.5ex>[l]^{\id}
  }
  \]

\subsection{The box product}
Given two Mackey functors $M$ and $N$, their \dfn{box product} $M\bbox N$ is
the Mackey functor given by
\[ (M\bbox N)\tht=M\tht\tens N\tht, \qquad (M\bbox N)\dt =
[(M\dt\tens N\dt)\oplus (M\tht\tens N\tht)]/\sim
\]
where the
equivalence relation will be defined in a moment.  The structure map
$p_*$ will send $M\tht\tens N\tht$ to the image of the right summand in
$(M\bbox N)\dt$, so we denote the element $m\tht\tens n\tht$ in
$(M\bbox N)\dt$ as $p_*(m\tht\tens n\tht)$.
The other structure maps in $M\bbox N$ are given by
\begin{itemize}
\item $t(m\tht\tens n\tht)=t(m\tht) \tens t(n\tht)$,
\item $p^*(m\dt\tens n\dt)=p^*(m\dt)\tens p^*(n\dt)$,
\item $p^*(p_*(m\tht \tens n\tht))=m\tht\tens n\tht + t(m\tht)\tens t(n\tht)$.
\end{itemize}
Finally, the equivalence relation that defines $(M\bbox N)\dt$ is
generated by
\begin{itemize}
\item $p_* (p^*(m\dt) \tens n\tht) = m\dt\tens p_*(n\tht)$,
\item
$p_* (m\tht \tens p^*(n\dt)) = p_*(m\tht)\tens n\dt$, and
\item $p_*( t(m\tht)\tens t(n\tht))=p_*(m\tht\tens n\tht)$.
\end{itemize}

There is a natural unit isomorphism $\cA\bbox M\ra M$ that sends
$g\dt\tens m\dt \mapsto m\dt$ and $p^*g\dt\tens m\tht \mapsto m\tht$.
There is a similar right unital isomorphism  $M\bbox \cA \ra M$.

The associativity isomorphism $(M\bbox N)\bbox Q\iso M\bbox (N\bbox
Q)$ is the evident one.  The twist isomorphism $M\bbox N\iso
N\bbox M$ sends $m\tht\tens n\tht\mapsto n\tht\tens m\tht$ and $m\dt
\tens n\dt \mapsto n\dt\tens m\dt$.  With these structures, $(\Mack_{C_2},
\bbox, \cA)$ is a symmetric monoidal category.

\begin{example}
\label{ex:box-with-F}
As an example we analyze $F\tht(\Z)\bbox F\tht(\Z)$.  On the $\Theta$
side this is the free abelian group with generators $g\tht \tens g\tht$, $g\tht\tens tg\tht$,
$tg\tht\tens g\tht$, and $tg\tht \tens tg\tht$, with the twist $t$
acting diagonally.  On the $\cpdot$ side we have the free abelian
group with generators $p_*(g\tht\tens g\tht)$ and $p_*(g\tht\tens
tg\tht)$.  A quick analysis of the $p^*$ and $p_*$ maps shows this is
isomorphic to $F\tht(\Z)\oplus F\tht(\Z)$, with corresponding generators $g\tht\tens
g\tht$ and $g\tht\tens tg\tht$.

As a generalization of the above, one can check that if $M$ is any
Mackey functor then $M\bbox F\tht(\Z)\iso F\tht(M\tht)$.
\end{example}

\bigskip

\subsection{Rings and modules}

A ring structure on a given Mackey functor $M$ consists of a unit map
$\cA\ra M$ and a multiplication $M\bbox M\ra M$ satisfying the usual
axioms (the commutative version of this structure is also called a Green functor).  This is equivalent to specifying a ring structure on $M\dt$
and a ring structure on $M\tht$ such that
\begin{enumerate}[(I)]
\item $p^*$ and $t$ are maps of rings, and
\item $p_*\colon M\tht \ra M\dt$ is a map of $M\dt$-$M\dt$-bimodules,
where $M\tht$ is an $M\dt$-$M\dt$-bimodule via $p^*$.
\end{enumerate}
Condition (II) is equivalent to the ``projection formulas''
\[ p_*( p^*(m\dt)\cdot m\tht) = m\dt \cdot p_*(m\tht), \qquad
p_*( m\tht\cdot p^*(m\dt)) = p_*(m\tht) \cdot m\dt. \qquad
\]

In particular, note that if $R$ is a ring then the constant Mackey
functor $\und{R}$ is a  Mackey ring in a natural way, by using the
multiplication in $R$ for the multiplication in both the $\Theta$ and
$\cpdot$ components.

If $R$ is a Mackey ring and $M$ is a Mackey functor, then equipping
$M$ with a (left) $R$-module structure is equivalent to giving an
$R\dt$-module structure on $M\dt$ and an $R\tht$-module structure on $M\tht$ such
that
\begin{enumerate}[(i)]
\item $p^*(x\dt \cdot m\dt)=p^*(x\dt)\cdot p^*(m\dt)$
\item $t(x\tht \cdot m\tht)= t(x\tht) \cdot t(m\tht)$
\item $p_*( p^*(x\dt)\cdot m\tht) = x\dt \cdot p_*(m\tht)$
\item $p_*( x\tht\cdot p^*(m\dt))= p_*(x\tht)\cdot m\dt$.
\end{enumerate}

When $R$ is the ring $\Z$ or $\Z/n$, we have the following description of $\und{R}$-modules:

\begin{prop}
\label{pr:constant}
A Mackey functor $M$ admits at most one structure of $\und{\Z}$-module, and
it admits such a structure if and only if $p_*p^*=2$.  Similarly, a
Mackey functor $M$ admits at most one structure of $\undr{n}$-module, and
it admits such a structure if and only if $nM\tht=0$, $nM\dt=0$, and
$p_*p^*=2$.
\end{prop}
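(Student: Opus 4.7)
My plan is to invoke the characterization of $R$-module structures spelled out immediately before the proposition and then work through conditions (i)--(iv) in the special case $R = \und{\Z}$ or $R = \undr{n}$.

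For uniqueness, note that both $\und{\Z}\tht$ and $\und{\Z}\dt$ equal $\Z$, and any abelian group has a unique $\Z$-module structure; hence the $\und{\Z}\tht$- and $\und{\Z}\dt$-actions on $M\tht$ and $M\dt$ are completely determined once one knows they exist, giving the ``at most one'' claim. In the $\undr{n}$ case the same argument applies, provided $M\tht$ and $M\dt$ actually admit $\Z/n$-module structures, which demands exactly $nM\tht = 0$ and $nM\dt = 0$.

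Next, I would chase through conditions (i)--(iv). Because $p^*$ and $t$ on $\und{\Z}$ (and $\undr{n}$) are identity maps, (i) and (ii) become the statements that $p^*$ and $t$ are $\Z$-linear (resp. $\Z/n$-linear), both automatic. Condition (iii), after substituting $p^*(x\dt) = x\dt$, reduces to the $\Z$-linearity of $p_*$, again automatic.

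All the substance therefore sits in condition (iv). Since $p_*\colon \und{R}\tht \to \und{R}\dt$ is multiplication by $2$ (for $R = \Z$ or $\Z/n$), condition (iv) unwinds to
\[
x\tht \cdot p_*p^*(m\dt) \;=\; p_*\bigl(x\tht \cdot p^*(m\dt)\bigr) \;=\; 2\,x\tht \cdot m\dt
\]
for all $x\tht$ and $m\dt$. Setting $x\tht = 1$ forces $p_*p^*(m\dt) = 2m\dt$, i.e., $p_*p^* = 2$, and conversely this single identity makes (iv) hold by linearity. This last step is the only piece with any content, though it is really just an unpacking of the definition rather than a genuine obstacle; a routine check that the unit acts as the identity then closes out both parts of the proposition.
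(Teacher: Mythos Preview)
Your proof is correct and follows essentially the same approach as the paper's own argument: use the uniqueness of the $\Z$-module structure on an abelian group, observe that conditions (i)--(iii) are automatic, and check that condition (iv) reduces to the identity $p_*p^*=2$. The paper's proof is terser but makes exactly the same points.
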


\begin{proof}
Because there is exactly one $\Z$-module structure on an abelian
group, conditions (i)--(iii) above are trivial (they all involve
products with a multiple of the identity).  Only condition (iv) has
content and it is the condition that $p_*p^*=2$.  The analysis for
$\undr{n}$-modules is exactly the same.
\end{proof}

\begin{remark}
In the classical literature, $\und{\Z}$-modules were originally called
``cohomological Mackey functors''.  See \cite{W}, for example.
\end{remark}

\begin{remark}
If $R$ is a Mackey ring, $M$ is a right $R$-module, and $N$ is a left
$R$-module, then we define $M\bbox_R N$ to be the coequalizer of the
two evident arrows
\[ M\bbox R \bbox N \dbra M\bbox N.
\]
This is the usual definition that works in any monoidal category.
\end{remark}

We will have special need for
the ``free'' $R$-modules $F\tht^R=R\bbox F\tht(\Z)$ and $F\dt^R=R\bbox F\dt(\Z)$.
Note that these have the adjunction properties
\[ \Hom_R(F\tht^R,M)\iso M\tht, \qquad
\Hom_R(F\dt^R,M)\iso M\dt.
\]
Note as well that $F\dt^R$ is just another name for $R$, since
$F\dt(\Z)=\cA$ is the unit for the box product.  It is best to think
of $F\tht^R$ as the free $R$-module generated by an element on the
$\Theta$ side, and $F\dt^R$ as the free $R$-module generated by an
element on the $\cpdot$ side.  We leave it as an exercise for the
reader to check that $F\tht^R\iso F\tht(R\tht)$.  That is, $F\tht^R$
is isomorphic to the $R$-module
\[ F\tht^R:
\xymatrixcolsep{3pc}\xymatrix{
 *{\phantom{--------}R\tht\langle g\tht\rangle \oplus R\tht\langle tg\tht\rangle\,
} \ar@(ul,dl)[]_{\begin{bsmallmatrix} 0 & 1\\ 1 &
   0\end{bsmallmatrix}} \ar@<0.5ex>[r]^-{\begin{bsmallmatrix} 1 & 1 \end{bsmallmatrix}}
&  R\tht\langle p_*g\tht\rangle
\ar@<0.5ex>[l]^-{\begin{bsmallmatrix} 1 \\ 1\end{bsmallmatrix}}
}
\]
where $R\tht$ acts diagonally on the $\Theta$ side and $R\dt$ acts via
$p^*\colon R\dt\ra R\tht$ on the $\cpdot$ side.

It is easy to check that both $F\tht^R$ and $F\dt^R$ are projective
$R$-modules.  We say that an $R$-module is \dfn{free} if it is a direct sum of copies of
 $F\tht^R$ and $F\dt^R$.  We have the expected notion of free basis:

\begin{defn}
\label{de:basis}
Let $R$ be a Mackey ring and let $M$ be an $R$-module.  Define a \dfn{basis} of $M$
to be a collection
of elements $b_{1}^{\Theta},\dots,b_{m}^{\Theta}\in M\tht$ and
$b^{\cpdot}_{m+1},\dots,b^{\cpdot}_{m+n} \in M\dt$ such that the
induced map
\[ \Biggl ( \bigoplus_{i=1}^m F\tht^R \Biggr)  \oplus \Biggl (
\bigoplus_{j=1}^n F\dt^R\Biggr ) \lra M
\]
is an isomorphism.
\end{defn}

Not every $R$-module has a basis, of course, only the free
modules.

\subsection{The internal hom (cotensor)}

Let $M$ and $N$ be Mackey functors, and write $\Hom(M,N)$ for the set
of maps of Mackey functors from $M$ to $N$.  This is an abelian group
in the natural way.  We will next describe an internal hom
$\uHom(M,N)$, itself a Mackey functor, with the property that
$\uHom(M,N)\dt=\Hom(M,N)$.

Consider the diagram of Mackey functors
\[ \xymatrix{
  *{\phantom{--.}F\tht(\Z)\ } \ar@(ul,dl)[]_{t}
\ar@<0.5ex>[r]^-{p^*}
&  F\dt(\Z)
\ar@<0.5ex>[l]^-{p_*}
}
\]
where here $t$ is the map sending $g\tht\mapsto tg\tht$, $p^*$ sends
$g\tht$ to $p^*(g\dt)$, and $p_*$ sends $g\dt$ to $p_*(g\tht)$.
One readily checks that $p_*\circ p^*=1+t$.
As a caution to the reader, note the maps in the above diagram go in the
opposite direction from the similarly named maps of abelian groups
within a Mackey functor. The names $p^*$ and $p_*$ are
just used to remember how we defined these maps of Mackey functors.

Given our Mackey functors $M$ and $N$, we first apply $(\blank)\bbox
M$ to the above diagram to get
\[ \xymatrix{
  *{\phantom{----.}F\tht(\Z)\bbox M\, } \ar@(ul,dl)[]_{t}
\ar@<0.5ex>[r]^-{p^*}
&  F\dt(\Z)\bbox M
\ar@<0.5ex>[l]^-{p_*}
}
\]
(suppressing $\bbox \id_M$ on the maps) and then we apply $\Hom(\blank,N)$ to get
\[ \xymatrixcolsep{3.5pc}\xymatrix{
  *{\phantom{--------.}\Hom(F\tht(\Z)\bbox M,N)\ } \ar@(ul,dl)[]_{\Hom(t,N)}
\ar@<0.5ex>[r]^-{\Hom(p_*,N)}
&  \Hom(F\dt(\Z)\bbox M,N).
\ar@<0.5ex>[l]^-{\Hom(p^*,N)}
}
\]
This is a Mackey functor, and this is our definition of $\uHom(M,N)$.
Note that $F\dt(\Z)\bbox M\iso M$, and so $\uHom(M,N)\dt\iso \Hom(M,N)$.

\begin{prop}
There are natural adjunctions
\[ \Hom(M,\uHom(N,Q))\iso \Hom(M\bbox
N,Q)
\]
and
\[ \uHom(M,\uHom(N,Q))\iso \uHom(M\bbox N,Q).
\]
\end{prop}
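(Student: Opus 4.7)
My plan is to first establish the set-level adjunction $\Hom(M, \uHom(N,Q)) \iso \Hom(M\bbox N, Q)$ by verifying it on the projective generators $M = F\dt(\Z) = \cA$ and $M = F\tht(\Z)$ and extending by free presentations. On $\cA$, the left side is $\Hom(\cA, \uHom(N,Q)) = \uHom(N,Q)\dt = \Hom(F\dt(\Z) \bbox N, Q)$, which matches the right side via the unit isomorphism $\cA \bbox N \iso N$. On $F\tht(\Z)$, both sides equal $\Hom(F\tht(\Z) \bbox N, Q)$ straight from the definition of $\uHom$. Every Mackey functor admits a presentation $P_1 \to P_0 \to M \to 0$ by direct sums of $F\tht(\Z)$ and $F\dt(\Z)$, and both $\Hom(\blank, \uHom(N,Q))$ and $\Hom(\blank\bbox N, Q)$ convert right-exact sequences in $M$ to left-exact sequences (for the second, using that $\blank\bbox N$ preserves direct sums and coequalizers by inspection of the construction of $M\bbox N$), so the five lemma upgrades the isomorphism to arbitrary $M$.

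Then I would deduce the internal hom statement by computing both sides componentwise. The $\cpdot$ component is
\[ \uHom(M, \uHom(N,Q))\dt = \Hom(M, \uHom(N,Q)) \iso \Hom(M \bbox N, Q) = \uHom(M\bbox N, Q)\dt, \]
and the $\Theta$ component is
\[ \uHom(M, \uHom(N,Q))\tht = \Hom(F\tht(\Z) \bbox M, \uHom(N,Q)) \iso \Hom(F\tht(\Z) \bbox (M \bbox N), Q) = \uHom(M\bbox N, Q)\tht, \]
using the set adjunction in each line and associativity of $\bbox$ in the second.

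The main obstacle is the final verification that these two componentwise identifications intertwine the structure maps $t$, $p^*$, $p_*$ of the Mackey functors on either side. Both sides' structure maps are defined by applying a Hom-functor to the diagram $F\tht(\Z) \bbox M \rightleftarrows F\dt(\Z) \bbox M \iso M$ (further $\bbox N$ on the right-hand side of our isomorphism), so the check reduces to naturality of the set adjunction in the $M$-slot applied to these three morphisms. This is tedious but essentially automatic---the content is precisely that $\uHom$ was designed to encode the $\bbox$-adjunction at the Mackey functor level, and there is nothing more to verify beyond the bookkeeping.
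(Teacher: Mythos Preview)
Your proof is correct and follows the standard approach. The paper itself does not give a proof---it simply says ``This is a standard argument and left to the reader''---so there is nothing to compare against beyond noting that your outline is precisely the expected argument: check the set-level adjunction on the representables $F\dt(\Z)$ and $F\tht(\Z)$ (where it holds by the very definition of $\uHom$), extend to all $M$ using that both sides take colimits in $M$ to limits, and then upgrade to the internal version componentwise. Your closing remark that the structure-map compatibility reduces to naturality of the set adjunction applied to the diagram $F\tht(\Z)\rightleftarrows F\dt(\Z)$ is exactly the point, and it is indeed automatic from how $\uHom$ was constructed.
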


\begin{proof}
This is a standard argument and left to the reader.
\end{proof}

\medskip

Now suppose that $R$ is a Mackey ring.
If $M$ and $N$ are left
$R$-modules we define $\Hom_R(M,N)\subseteq \Hom(M,N)$ to be the
subset consisting of the $R$-linear maps.

\smallskip

If $R$ is commutative we
now define an
$R$-module $\uHom_R(M,N)$.  The categorical definition is to define
this to be the equalizer of
\[ \uHom(M,N)\dbra \uHom(R\bbox M,N)
\]
where one map is induced by $R\bbox M\ra M$ and the other is the
composition
$\uHom(M,N)\ra \uHom(R\bbox M,R\bbox N) \ra \uHom(R\bbox M,N)$.
Alternatively, we can form the diagram
\[ \xymatrixcolsep{3.5pc}\xymatrix{
*{\phantom{----}F\tht^R\bbox_R M\, }
\ar@(ul,dl)[]_{t\bbox \id_M}
\ar@<0.5ex>[r]^-{p^*\bbox \id_M}
&  F\dt^R\bbox_R M
\ar@<0.5ex>[l]^-{p_*\bbox \id_M}
}
\]
and then apply $\Hom_R(\blank,N)$.  The resulting Mackey functor is
$\uHom_R(M,N)$.

The following result is again standard and left to the reader.

\begin{prop}
Suppose $R$ is a commutative Mackey ring.
There are natural adjunctions
\[ \Hom_R(M,\uHom_R(N,Q))\iso \Hom_R(M\bbox_R
N,Q)
\]
and
\[ \uHom_R(M,\uHom_R(N,Q))\iso \uHom_R(M\bbox_R N,Q).
\]
\end{prop}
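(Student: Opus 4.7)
The plan is to bootstrap from the non-$R$-linear adjunctions established in the previous proposition and identify the $R$-linearity conditions on both sides. First I would upgrade the set-level adjunction
\[
\Hom(M, \uHom(N,Q)) \iso \Hom(M \bbox N, Q)
\]
to the $R$-linear adjunction
\[
\Hom_R(M, \uHom_R(N,Q)) \iso \Hom_R(M \bbox_R N, Q),
\]
and then promote this to the internal version by an auxiliary-variable argument.

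For the set-level step, given a map $\phi\colon M \to \uHom(N,Q)$ with non-linear adjoint $\tilde\phi\colon M \bbox N \to Q$, I would verify two equivalences: (a) $\phi$ lands in $\uHom_R(N,Q)$ (i.e.\ each $\phi(m)$ is $R$-linear) if and only if $\tilde\phi$ kills the coequalizer relation coming from the $N$-side, namely $\tilde\phi(m\bbox (rn)) = \tilde\phi((rm)\bbox n)$ after transporting the $R$-action via commutativity; and (b) $\phi$ itself is $R$-linear if and only if $\tilde\phi$ kills the corresponding relation from the $M$-side. Combined, (a) and (b) say that $\tilde\phi$ factors through $M \bbox_R N$, giving the desired bijection. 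Crucially, commutativity of $R$ ensures that the left and right $R$-actions used in forming $M\bbox_R N$ agree, so the two conditions match without extra sign or twist data.

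To obtain the internal isomorphism, I would use the explicit description of $\uHom_R(A,B)$ given just before the proposition, namely the Mackey functor obtained by applying $\Hom_R(-,B)$ to the diagram
\[
\xymatrixcolsep{3pc}\xymatrix{
F\tht^R\bbox_R A \ar@(ul,dl)[]_{t\bbox \id} \ar@<0.5ex>[r]^-{p^*\bbox\id} & F\dt^R\bbox_R A \ar@<0.5ex>[l]^-{p_*\bbox\id}.
}
\]
Apply this with $A = M$ and with $A = M\bbox_R N$. Evaluating at the $\cpdot$ and $\Theta$ levels reduces the claim to
\[
\Hom_R(F_*^R \bbox_R M, \uHom_R(N,Q)) \iso \Hom_R(F_*^R \bbox_R M \bbox_R N, Q)
\]
for $* \in \{\cpdot,\Theta\}$, which is exactly the set-level adjunction applied to $F_*^R\bbox_R M$ (using associativity of $\bbox_R$). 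The resulting isomorphisms on $\cpdot$ and $\Theta$ parts are natural in the diagram above, so they intertwine the structure maps $t$, $p^*$, $p_*$ of the two internal hom Mackey functors and assemble into an isomorphism of Mackey functors. The first displayed adjunction in the statement then follows by taking $\cpdot$-components of the second, since $\uHom_R(A,B)\dt = \Hom_R(A,B)$.

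The main obstacle is Step~(a,b) above: carefully checking that pointwise $R$-linearity of $\phi(m)\colon N \to Q$ translates to exactly the $N$-side coequalizer relation of $M\bbox_R N$, while $R$-linearity of $\phi$ as a map of Mackey functors translates to the $M$-side relation. Once one unwinds the pointwise $R$-module structure on $\uHom(N,Q)$ coming from the action on $Q$ and compares with the coequalizer presentation $M\bbox R\bbox N \dbra M\bbox N$ defining $\bbox_R$, both conditions collapse onto the single quotient defining $M\bbox_R N$ (using commutativity of $R$). Everything else is a formal diagram chase and naturality check.
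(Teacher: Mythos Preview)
Your proposal is correct and in fact considerably more detailed than the paper's own treatment: the paper simply writes ``The following result is again standard and left to the reader,'' so there is no proof to compare against. Your bootstrap from the non-$R$-linear adjunction via identification of the coequalizer relations, followed by the auxiliary-variable promotion to the internal version, is exactly the kind of standard argument the authors have in mind.
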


\begin{prop}
Let $R$ be $\und{\Z}$ or $\undr{n}$.  If $M$ and $N$ are $R$-modules
then the canonical map $M\bbox N \ra
M\bbox_{R} N$ is an isomorphism.
\end{prop}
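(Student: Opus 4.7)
The plan is to unfold the definition of $M\bbox_R N$ and show the coequalizer already imposes no new relations. By the remark preceding the proposition, $M\bbox_R N$ is the coequalizer of the parallel pair
\[ \alpha,\beta\colon M\bbox R\bbox N \dbra M\bbox N, \]
with $\alpha=\mu_M\bbox\id_N$ and $\beta=\id_M\bbox\mu_N$; the canonical map $M\bbox N \to M\bbox_R N$ is the quotient by the coequalizer relation, so showing $\alpha=\beta$ shows this map is an isomorphism. Since a map of Mackey functors is determined by its $\Theta$- and $\cpdot$-components, I would verify the equality on each.

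On the $\Theta$-component, both maps land in $M\tht\tens N\tht$ and send $m\tht\tens r\tht\tens n\tht$ to $(r\tht m\tht)\tens n\tht$ and $m\tht\tens (r\tht n\tht)$ respectively. For $R\in\{\und{\Z},\undr{n}\}$ we have $R\tht\in\{\Z,\Z/n\}$, so $r\tht$ acts by scalar multiplication; the two expressions are then equal by $\Z$-bilinearity of the tensor product. In the $\undr{n}$ case, the choice of integer lift of $r\tht$ is immaterial because $nM\tht=nN\tht=0$.

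For the $\cpdot$-component, the first step is to cut down to a manageable set of generators of $(M\bbox R\bbox N)\dt$. Applying the box product relation $m\dt\tens p_*(z)=p_*(p^*(m\dt)\tens z)$ (together with its mirror arising from associativity of $\bbox$), every element can be written as a sum of two types: (A) pure-$\cpdot$ tensors $m\dt\tens r\dt\tens n\dt$, and (B) elements of the form $p_*(y)$ with $y\in(M\bbox R\bbox N)\tht$. On type (A), the same bilinearity argument applies since $R\dt\in\{\Z,\Z/n\}$. On type (B), the maps $\alpha$ and $\beta$ are natural transformations of Mackey functors, hence commute with $p_*$, so $\alpha(p_*(y))=p_*(\alpha\tht(y))=p_*(\beta\tht(y))=\beta(p_*(y))$ using the $\Theta$-level equality already established. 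The main obstacle is merely the bookkeeping of reducing the $\cpdot$-generators of the triple box product to these two clean types; underlying the whole argument is the principle that for $R\in\{\und{\Z},\undr{n}\}$ the action of $R\tht$ or $R\dt$ is literally integer (or $\Z/n$) scalar multiplication, so the tensor products defining $\bbox$ already encode the full $R$-module relation.
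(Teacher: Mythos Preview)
Your proof is correct and takes a genuinely different route from the paper's.  The paper argues via Yoneda: it observes that $\Hom_R(N,N')=\Hom(N,N')$ for $R$-modules $N,N'$ (since the $R$-module structure is a property, not extra data, by Proposition~\ref{pr:constant}), and then chains the tensor--hom adjunctions
\[
\Hom(M\bbox N,N')\iso \Hom(M,\uHom(N,N'))=\Hom_R(M,\uHom_R(N,N'))\iso \Hom_R(M\bbox_R N,N')=\Hom(M\bbox_R N,N')
\]
to conclude that $M\bbox N$ and $M\bbox_R N$ represent the same functor.  You instead work at the level of the coequalizer and show the two parallel maps $M\bbox R\bbox N\rightrightarrows M\bbox N$ are literally equal, by reducing to generators and using that $R\tht$ and $R\dt$ act by integer scalars so bilinearity of $\otimes_\Z$ already identifies the two actions.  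Your approach is more elementary and self-contained (it does not invoke $\uHom$ or $\uHom_R$ at all), while the paper's Yoneda argument is slicker and makes clearer that the result is a formal consequence of ``$R$-module structure is a property.''  Both ultimately rest on the same observation; they just deploy it at different levels of abstraction.
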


\begin{proof}
Observe $\Hom_{R}(N,N')=\Hom(N,N')$ for any $R$-module $N'$. This fact and the above adjunctions give us that
\begin{align*}
\Hom(M\bbox N,N')\cong \Hom(M, \uHom(N,N')) &= \Hom_{R}(M, \uHom_{R}(N,N'))\\
& \cong \Hom_{R}(M\bbox_{R}N,N') \\
&= \Hom(M\bbox_{R}N,N').
\end{align*}
We see that $\Hom(M\bbox N,\blank)$ and $\Hom(M\bbox_{R} N,\blank)$ are
the same functor, and the result follows.
\end{proof}

\subsection{Some adjoint functors}
\label{se:adjoint}
Now we specialize to the case where $R$ is a ring and $\und{R}$ is the
associated constant Mackey ring.  An $\und{R}$-module consists of a
Mackey functor $M$ together with $R$-module structures on $M\tht$ and
$M\dt$ having the properties that $p^*$, $p_*$, and $t^*$ are maps of
$R$-modules and $p_*p^*=2$.

The evaluation functor $\ev\tht\colon \und{R}\MMod\ra R[C_2]\MMod$ has
a left adjoint $\cK_L$ and a right adjoint $\cK_R$ given as follows:
\[ \cK_L(M)\tht=M, \quad \cK_L(M)\dt=M/C_2,\quad  \text{$p_*$ is projection},\quad
p^*(\bar{x})=x+t x;
\]
\[ \cK_R(M)\tht=M, \quad \cK_R(M)\dt=M^{C_2},\quad p_*(x)=x+tx, \quad
\text{$p^*$ is inclusion}.
\]
Note that the adjunctions yield a natural transformation $\cK_L\ra
\cK_R$ that is the identity on the $\Theta$-component.  

The next result is an easy exercise:

\begin{prop}
\label{pr:adjoint}
If $2$ is invertible in $R$ then both pairs $(\cK_L,\ev\tht)$ and
$(\ev\tht,\cK_R)$ are adjoint equivalences, and the natural
transformation $\cK_L\ra \cK_R$ is an isomorphism.  
\end{prop}


\section{$\protect\undr{\ell}$-modules}
\label{se:modules}

In this section we investigate the category of $\undl$-modules, where
$\ell$ is a prime.  We give a
simple classification of all finitely-generated modules.  Then we compute all
box products and internal homs, and investigate their
associated derived functors.

\medskip

Recall from Proposition~\ref{pr:constant}
that a $\undr{\ell}$-module is a pair of $\Z/\ell$-vector spaces $V\tht$ and
$V\dt$ together with maps
\[ \xymatrix{
*{\phantom{--} V\tht\ } \ar@<0.5ex>[r]^{\ \ p_*} \ar@(ul,dl)[]_t
& V\dt \ar@<0.5ex>[l]^{\ \ p^*}}
\]
such that $p^*p_*=1+t$ and $p_*p^*=2$.  Unsurprisingly, the structure of
the category $\undr{\ell}\MMod$ in the $\ell=2$ case turns out to be very different from the
$\ell\neq 2$ case.  We investigate both cases in detail below.
A $\undr{\ell}$-module $M$ will be called \dfn{finitely-generated} if both
$M\tht$ and $M\dt$ are finite-dimensional vector spaces over $\Z/\ell$.

\subsection{Duality}

If $V$ is a $\Z/\ell$-vector space, let $V^\chk$ denote the dual vector
space.  If $M$ is a $\undl$-module, write $M^\op$ for the result of
applying $(\blank)^\chk$ objectwise to $M$.  That is,
\[ (M^\op)\tht=(M\tht)^\chk \qquad\text{and}\qquad
(M^\op)\dt=(M\dt)^\chk,
\]
with the dual structure maps $t_{M^\op}=(t_M)^\chk$, $p^*_{M^\op}=(p_*^M)^\chk$,
$p_*^{M^\op}=(p^*_M)^\chk$.
The contravariant functor $(\blank)^\op$ is an
anti-equivalence when restricted to the finitely-generated
$\undr{\ell}$-modules.  The following useful result is a consequence:

\begin{prop}
\label{pr:pr=>inj}
If $P$ is a finitely-generated projective $\undr{\ell}$-module, then
$P^\op$ is an injective $\undr{\ell}$-module.
\end{prop}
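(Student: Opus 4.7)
My plan is to verify that the contravariant functor $\Hom_{\undl}(-, P^{\op})$ is exact on all of $\undl\MMod$ by realizing it as a composition of two exact functors. Since the conclusion concerns injectivity in the full category of $\undl$-modules (not merely the finitely-generated subcategory on which $(-)^{\op}$ is an anti-equivalence), I need a version of the duality adjunction that works for arbitrary $M$.

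First I would establish a natural isomorphism
\[
\Hom_{\undl}(M, N^{\op}) \iso \Hom_{\undl}(N, M^{\op})
\]
valid for all $\undl$-modules $M$ and $N$ whenever one of them is finitely-generated (which is all that is required below). The data of a map $f \colon M \to N^{\op}$ consists of $\Z/\ell$-linear maps $f\tht \colon M\tht \to (N\tht)^{\chk}$ and $f\dt \colon M\dt \to (N\dt)^{\chk}$, which is equivalent to a pair of $\Z/\ell$-bilinear pairings $M\tht \times N\tht \to \Z/\ell$ and $M\dt \times N\dt \to \Z/\ell$. The Mackey-linearity of $f$ (compatibility with $t$, $p_*$, and $p^*$) unpacks, via the definition of $(-)^{\op}$ which interchanges $p^*$ and $p_*$, to the identities
\[
\langle p_*(m\tht), n\dt\rangle\dt = \langle m\tht, p^*(n\dt)\rangle\tht, \quad \langle p^*(m\dt), n\tht\rangle\tht = \langle m\dt, p_*(n\tht)\rangle\dt,
\]
together with invariance under $t$ on the $\Theta$-pairing. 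These conditions are manifestly symmetric in $M$ and $N$, so the same data defines a map $N \to M^{\op}$.

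Second, I would specialize to $N = P$ to obtain $\Hom_{\undl}(-, P^{\op}) \iso \Hom_{\undl}(P, (-)^{\op})$. The right-hand side is the composition of the contravariant exact functor $(-)^{\op}$ (exact because $\Z/\ell$-vector space duality $(\blank)^{\chk}$ is an exact functor on all $\Z/\ell$-vector spaces, and the structure maps of a Mackey functor are dualized entrywise) with the covariant exact functor $\Hom_{\undl}(P,\blank)$ (exact by projectivity of $P$). Composing an exact contravariant functor with an exact covariant functor yields an exact contravariant functor, so $\Hom_{\undl}(-, P^{\op})$ is exact, and $P^{\op}$ is injective.

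The only real step requiring care is verifying the symmetry of the compatibility conditions in the bilinear-pairing reformulation; this is a purely formal unpacking of the definitions but benefits from writing out the $p^*$/$p_*$-swap carefully. An alternative route would be to identify $(-)^{\op}$ with $\uHom(-,D)$ for $D = (F\dt^{\undl})^{\op}$ and then apply the tensor–hom adjunction proved earlier, which would reduce the symmetry to the commutativity of $\bbox$; either approach will work.
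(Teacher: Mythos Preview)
Your proof is correct and takes a genuinely different route from the paper's. The paper invokes a Mackey-functor version of Baer's criterion: to verify injectivity of $P^{\op}$ it suffices to check the extension property against inclusions $I \hookrightarrow F\dt^{\undl}$ and $J \hookrightarrow F\tht^{\undl}$; since these free modules are finitely-generated, so are $I$ and $J$, and the extension problem then dualizes (via the anti-equivalence on finitely-generated modules) to a lifting problem for the projective $P$.

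Your approach instead establishes the adjunction $\Hom_{\undl}(M, P^{\op}) \cong \Hom_{\undl}(P, M^{\op})$ directly via bilinear pairings and observes that the right-hand side is a composition of two exact functors. This is cleaner in that it avoids both Baer's criterion and double-duality; in fact it shows finite-generation of $P$ is irrelevant, so any projective $\undl$-module has injective dual. (Your parenthetical restriction that one of $M$, $N$ be finitely-generated is unnecessary: the identification of $\Hom(V, W^{\chk})$ with bilinear pairings $V \times W \to \Z/\ell$, and hence with $\Hom(W, V^{\chk})$, holds for arbitrary $\Z/\ell$-vector spaces.) The paper's route has the minor advantage of being a one-line appeal to a standard criterion, but yours is more self-contained and slightly more general.
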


\begin{proof}
If $R$ is a Mackey ring then an $R$-module $M$ is injective if and
only if it satisfies Baer's criterion.  In this setting, that means
$M$ has the extension property with respect to submodule inclusions $I\inc
F\dt^R$ and $J\inc F\tht^R$ (cf. \cite[Theorem 2.4.1]{R}, but the
proof is readily adapted from the classical proof of Baer's
criterion as in \cite[2.3.1]{W}).  When $R=\undr{\ell}$, the modules $F\dt^R$
and $F\tht^R$ are finitely-generated, and so $I$ and $J$ will be as
well.  The result then follows immediately by applying duality.
\end{proof}

\subsection{Classification}
We begin by introducing some basic $\undl$-modules.  Define the two
basic ``free module'' functors $F\tht^{\undl}$, $F\dt^{\undl}\colon
\Z/\ell\MMod\ra \undl\MMod$ by
\[
F\tht^{\undl}(V)=\undl\,\bbox\, F\tht(V), \qquad
F\dt^{\undl}(V)=\undl\,\bbox\, F\dt(V).
\] We will shorten these to $F\tht^\ell$
and $F\dt^\ell$, and also abbreviate $F\tht^{\ell}(\Z/\ell)=F$ and
$F\dt^{\ell}(\Z/\ell)=H$.
Once again, these are

\[ H:
\xymatrix{
 *{\phantom{--}\Z/\ell\ } \ar@(ul,dl)[]_{\id} \ar@<0.5ex>[r]^-{2}
& \Z/\ell \ar@<0.5ex>[l]^-{\id}
}\qquad\text{and}\qquad
 F:
\xymatrix{
 *{\phantom{----.}\Z/\ell \oplus \Z/\ell\ }
\ar@(ul,dl)[]_{\begin{bsmallmatrix} 0 & 1\\ 1 &
   0\end{bsmallmatrix}} \ar@<0.5ex>[r]^-{\begin{bsmallmatrix} 1 & 1 \end{bsmallmatrix}}
&  \Z/\ell. \ar@<0.5ex>[l]^-{\begin{bsmallmatrix} 1 \\ 1\end{bsmallmatrix}}
}
\]
If $M$ is a $\undl$-module then giving a
 $\undl$-module map $H \ra M$ is equivalent to
specifying an element of $M\dt$, and giving a $\undl$-module map $F\ra M$
is equivalent to specifying an element in $M\tht$.

Consider the duals of the basic free objects. We have
\[
 H^{op}:
\xymatrix{
*{\phantom{--}\Z/\ell~}\ar@(ul,dl)[]_{\id} \ar@<0.5ex>[r]^-{\id}
& \Z/\ell, \ar@<0.5ex>[l]^-{2}
}
\]
while $F^{op}\cong F$. The modules $F$ and $H$ are projective, and consequently their duals
$F$ and $H^{\op}$ are injective by Proposition~\ref{pr:pr=>inj}.
In particular, $F$ is both
projective and injective.

Let $V$ be a $\Z/\ell$-vector space. We will also need the Mackey
functor
\[ \Sth(V): \qquad {\MMack{V}{0}}
\]
where $t$ is multiplication by $-1$.  Using the functors $\cK_L$ and
$\cK_R$ from Section~\ref{se:adjoint}, observe that  $\Sth(V)$ is $\cK_L$
(or $\cK_R$) applied to the sign representation.  For convenience we will
abbreviate $S\tht(\Z/\ell)$ as $S\tht$.  

\subsection{Classification for \mdfn{$\ell$} odd}
The classification of $\undl$-modules for $\ell$ odd is
straightforward: by Proposition~\ref{pr:adjoint} the category of
$\undl$-modules is equivalent to the category of $\Z/\ell[C_2]$-modules.
The latter is semi-simple 
with two irreducibles: $\Z/\ell$ with the trivial action and $\Z/\ell$ with
the sign action.  Applying the functor $\cK_R$ to these yields $H$ and
$S\tht$.  Thus, we obtain the following:

\begin{prop}\label{pr:odd-splitting}
Assume $\ell$ is an odd prime.  Then every $\undl$-module is a
direct sum of copies of $H$ and $S\tht$.  Moreover, every
$\undl$-module is projective and (consequently) every short exact
sequence splits.
\end{prop}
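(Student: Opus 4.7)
The plan is to exploit that $2$ is invertible in $\Z/\ell$ for $\ell$ odd. Given a $\undl$-module $M$, I would begin by decomposing $M\tht$ into $\pm 1$-eigenspaces for the involution $t$, writing $M\tht = M\tht^+ \oplus M\tht^-$; this is valid because $\frac{1}{2}(1 \pm t)$ are complementary idempotents on $M\tht$.

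Next I would analyze how the transfer maps interact with this decomposition. The Mackey relation $tp^* = p^*$ shows $p^*$ lands in $M\tht^+$, and $p_*p^* = 2$ being invertible forces $p^*$ to be injective. For any $v \in M\tht^+$, the relation $p^*p_* = 1 + t$ gives $p^*(p_*(v)) = 2v$, so $p^*$ is also surjective onto $M\tht^+$ and is therefore an isomorphism $M\dt \iso M\tht^+$. Applying the same relation to $v \in M\tht^-$ yields $p^*(p_*(v)) = v + tv = 0$, and combined with injectivity of $p^*$ this forces $p_*$ to vanish on $M\tht^-$. Thus $M$ splits as a direct sum of two Mackey submodules: one with $\Theta$-part $M\tht^+$ and $\cpdot$-part $M\dt$, and one with $\Theta$-part $M\tht^-$ and zero $\cpdot$-part.

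To finish, I would identify these summands with the standard pieces. The first summand, viewed through $p^* \colon M\dt \iso M\tht^+$, has $t = \id$, $p^* = \id$, and $p_* = 2$, so is a direct sum of copies of $H$ indexed by any basis of $M\dt$; the second summand has $t = -1$ and trivial $\cpdot$-part, hence is a direct sum of copies of $S\tht$ indexed by any basis of $M\tht^-$. For the projectivity statement, $H = F\dt^\ell$ is projective by construction, and running the same eigenspace decomposition on the free module $F = F\tht^\ell$ (whose $\Theta$-part $\Z/\ell \oplus \Z/\ell$ carries the swap involution) exhibits $F \iso H \oplus S\tht$, so $S\tht$ is a summand of a free module and hence projective. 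Since every $\undl$-module is a direct sum of projectives, it is projective, and any short exact sequence then splits by lifting the identity along the quotient map. The main subtle point will be verifying that the proposed eigenspace splitting genuinely produces Mackey submodules, but this reduces precisely to the computations that $p_*$ annihilates $M\tht^-$ and $p^*$ lands in $M\tht^+$, both of which hinge on the invertibility of $2$.
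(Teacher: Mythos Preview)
Your proof is correct and follows essentially the same approach as the paper: the paper phrases the decomposition as a short exact sequence $0 \to F\dt^\ell(M\dt) \to M \to S\tht(M\tht/\im p^*) \to 0$ split by the map $1-t$, which is exactly your eigenspace decomposition $M\tht = M\tht^+ \oplus M\tht^-$ via the idempotents $\tfrac{1}{2}(1\pm t)$ once one observes that $\im p^* = M\tht^+$. Both arguments then deduce $F \iso H \oplus S\tht$ to conclude that $S\tht$ is projective.
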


Note that when $\ell$ is odd, we have $H\iso H^\op$ and $F\iso H\oplus
\Sth$.  Note also that if $M$ is
finitely-generated then $M\iso H^a \oplus S\tht^b$ where
$a=\dim_{\Z/\ell}(M\dt)$ and
$b=\dim_{\Z/\ell}(M\tht)-\dim_{\Z/\ell}(M\dt)$, since this can be
detected using the above equivalence of categories.

Using that $\ev\tht$ is strong monoidal lets us easily compute box
products in $\undl$-modules.  
For completeness we record the following 
nontrivial box
products and internal homs (all other computations follow from $H$
being the unit).  From here on, we let $\uHom$ and $\bbox$ denote $\uHom_{\undl}$ and $\bbox_{\undl}$.

\begin{prop}
\label{pr:odd-stuff}
Assume $\ell$ is odd.  Then we have
\[ S\tht\bbox S\tht\iso H, \qquad
\uHom(S\tht,S\tht)\iso H, \qquad
\uHom(S\tht,H)\iso S\tht.
\]
\end{prop}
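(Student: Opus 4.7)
My approach is to exploit the semisimplicity of the category of finitely-generated $\undl$-modules established in Proposition~\ref{pr:odd-splitting}: every such module is isomorphic to $H^a \oplus S\tht^b$ with $a = \dim_{\Z/\ell}(M\dt)$ and $b = \dim_{\Z/\ell}(M\tht) - \dim_{\Z/\ell}(M\dt)$. So for each of the three identifications it suffices to compute the $\Theta$- and $\cpdot$-dimensions of the Mackey functor on the left and match them against those of $H$ (namely $1$ and $1$) or $S\tht$ (namely $1$ and $0$).

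For $S\tht \bbox S\tht$ I would unwind the definition of the box product directly. The $\Theta$-side is $\Z/\ell \tens \Z/\ell \iso \Z/\ell$. On the $\cpdot$-side the summand coming from the $\cpdot$-parts of the two factors vanishes, leaving $\Z/\ell \tens \Z/\ell$ modulo the defining equivalence relation; the two relations involving $p_*$ of an element in the image of $p^*$ are trivially satisfied, and the relation $p_*(t(m) \tens t(n)) = p_*(m \tens n)$ holds automatically because $(-1)(-1) = 1$. Hence the $\cpdot$-side is also $\Z/\ell$, so $S\tht \bbox S\tht$ has both dimensions $1$ and is isomorphic to $H$.

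For $\uHom(S\tht, N)$ with $N \in \{S\tht, H\}$, the $\cpdot$-side is $\Hom(S\tht, N)$. When $N = S\tht$ this is $\Z/\ell$: a Mackey-functor map is determined by its $\Theta$-component, and every $\Z/\ell$-linear map automatically commutes with $t = -1$. When $N = H$ it is $0$: the compatibility $f\dt \circ p_* = p_* \circ f\tht$ forces $2 f\tht = 0$ on the $\Theta$-component, and $\ell$ odd then gives $f\tht = 0$. The $\Theta$-side is $\Hom(F \bbox S\tht, N)$, so I would first compute $F \bbox S\tht$ directly: its $\Theta$-side is a two-dimensional $\Z/\ell$-vector space with a sign-twisted $t$-action coming from $t = -1$ on $S\tht$, and its $\cpdot$-side is $\Z/\ell$. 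A short linear-algebra check then shows that Mackey-functor maps from $F \bbox S\tht$ to either $S\tht$ or $H$ are parametrized by a single scalar in $\Z/\ell$. This yields dimensions $(1,1)$ for $\uHom(S\tht, S\tht)$ and $(0,1)$ for $\uHom(S\tht, H)$, matching $H$ and $S\tht$ respectively.

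The main obstacle is careful sign-tracking in $F \bbox S\tht$: the $t$-action on $S\tht$ being multiplication by $-1$ combines nontrivially with the swap in $F\tht$, and one must verify that the resulting candidate maps respect all of the Mackey-functor structure. Once the two component dimensions are correctly pinned down, the semisimple classification identifies each Mackey functor up to isomorphism.
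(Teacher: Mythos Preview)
Your proposal is correct. The paper's own proof is simply the word ``Routine'', so your explicit computation via the dimension-counting classification from Proposition~\ref{pr:odd-splitting} is a perfectly valid way to carry out that routine check; the dimension counts you give for the $\Theta$- and $\cpdot$-sides are all accurate.

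One small streamlining you might consider: once you have established $S\tht \bbox S\tht \iso H$, the object $S\tht$ is invertible (it is its own inverse). The natural map $\dcotens(X,H)\dbox Z \to \dcotens(X,Z)$ is an isomorphism when $X$ is dualizable (in particular when $X$ is invertible), so $\uHom(S\tht,N) \iso S\tht \bbox N$ for any $N$. This immediately gives $\uHom(S\tht,S\tht) \iso S\tht \bbox S\tht \iso H$ and $\uHom(S\tht,H) \iso S\tht$ without computing $F \bbox S\tht$ or tracking the sign-twisted $t$-action by hand. Your direct computation is not wrong, but the invertibility shortcut bypasses the part you flagged as the ``main obstacle''.
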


\begin{proof}
Routine.
\end{proof}

\subsection{Classification for \mdfn{$\ell=2$}}
For the bulk of this paper we focus on the case $\ell=2$.  Note that
when working mod $2$ we can view $t$ as the identity in $S\tht$.  We
also pick up a new Mackey functor in this case: if $V$ is a
$\Z/2$-vector space then we have
\[ \Sd(V): \qquad  {\MMack{0}{V}} \phantom{y_j}
\]
As usual, we abbreviate
$\Sd(\Z/2)$ as $\Sd$.

The following result gives a complete classification for
finitely-generated $\undt$-modules:

\begin{prop}
\label{pr:Hmod-classify}
Every finitely-generated $\undt$-module $M$ is isomorphic to a direct sum
of the Mackey functors $H$, $H^{\op}$, $F$, $\Sd$, and $\Sth$.  The
number of summands of each type is uniquely determined and is given
by the following formulas.  Let $f$ be the rank of $1+t\colon M\tht\ra
M\tht$.  Then
\begin{itemize}
\item The number of $F$ summands equals $f$;
\item The number of $H^\op$ summands equals $\dim M\tht-f-\dim(\ker
p_*)$;
\item The number of $H$ summands equals $\dim M\dt-\dim(\ker p^*)-f$;
\item The number of $\Sd$ summands equals $\dim(\ker p^*)-\dim M\tht+f+\dim (\ker p_*)$;
\item The number of $\Sth$ summands equals
$\dim(\ker
p_*)-\dim M\dt+\dim(\ker p^*)$.
\end{itemize}
\end{prop}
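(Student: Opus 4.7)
My plan is to build the decomposition in two stages: first extract an $F^f$ direct summand from $M$, and then decompose the complement (which will satisfy $t=\id$ on the $\Theta$-side) by a short filtration argument. Uniqueness and the explicit counting formulas will then drop out of a direct dimension count. The key operator throughout is the nilpotent $N=1+t$ on $M\tht$, which satisfies $N^2=0$, $N=p^*p_*$, and $p_*N=0=Np^*$ (the last two following from $p_*t=p_*$, $tp^*=p^*$, and $p_*p^*=0$).

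For the first stage, choose a basis $v_1,\dots,v_f$ of $\im N$ and lift to elements $g_1,\dots,g_f\in M\tht$ with $Ng_i=v_i$. The $\Theta$-side vectors $\{g_i,tg_i\}$ together with the $\bullet$-side vectors $\{p_*g_i\}$ span a sub-Mackey functor $C\iso F^f$; linear independence of the $p_*g_i$ in $M\dt$ follows by applying $p^*$ to a dependence relation. The hardest step is now splitting $C$ off as a direct summand. On the $\Theta$-side this is straightforward: extend $\{v_i\}$ to a basis $\{v_i\}\cup\{u_j\}$ of $\ker N$ and let $B=\mathrm{span}\{u_j\}$; since $B\subseteq\ker N$, $t$ acts as the identity on $B$ and the decomposition $M\tht=C\tht\oplus B$ is immediate. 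On the $\bullet$-side, however, a naive complement of $C\dt$ need not satisfy $p^*(D\dt)\subseteq B$. To correct this I would introduce the projection
\[
\phi\colon M\dt\to M\dt,\qquad \phi(y)=y-\sum_i\alpha_i(y)\,p_*g_i,
\]
where $\alpha_i(y)$ is the coefficient of $v_i$ when $p^*(y)\in\ker N$ is written in the basis $\{v_i\}\cup\{u_j\}$. A direct check shows $\phi^2=\phi$, $\ker\phi=C\dt$, $p^*(\im\phi)\subseteq B$, and $p_*(B)\subseteq\im\phi$ (the last using $p^*p_*|_B=N|_B=0$). Taking $D\tht=B$ and $D\dt=\im\phi$ thus produces a sub-Mackey functor $D$ with $M=C\oplus D$.

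With $C$ removed, $N$ vanishes on $D\tht$, so $t=\id$ there and, by Proposition~\ref{pr:constant}, both $p^*p_*$ and $p_*p^*$ vanish on $D$. Hence $\im p^*\subseteq\ker p_*$ inside $D\tht$ and $\im p_*\subseteq\ker p^*$ inside $D\dt$. Choose splittings $D\tht=\ker p_*\oplus W$ with $\ker p_*=\im p^*\oplus V'$, and symmetrically $D\dt=\ker p^*\oplus W'$ with $\ker p^*=\im p_*\oplus Y'$. Each $w'\in W'$ together with $p^*(w')\in\im p^*$ generates a copy of $H$, each $w\in W$ together with $p_*(w)\in\im p_*$ generates a copy of $H^{\op}$, and the vector spaces $V'$ and $Y'$ contribute $\Sth$ and $\Sd$ summands respectively. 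This produces the desired decomposition of $D$, and hence of $M$.

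Uniqueness and the counting formulas then reduce to evaluating the five invariants $\dim M\tht$, $\dim M\dt$, $\dim\ker p_*$, $\dim\ker p^*$, and the rank of $1+t$ on each of $H$, $H^{\op}$, $F$, $\Sth$, $\Sd$, and inverting the resulting linear system to recover $(h,h',f,s_\Theta,s_\bullet)$. The main obstacle in the whole proof is the direct summand step: the correction map $\phi$ is exactly what is needed to make the $\bullet$-side complement compatible with the $\Theta$-side complement under $p^*$, and without it the two halves of the Mackey functor fail to decompose in parallel.
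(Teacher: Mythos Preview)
Your proof is correct, but it takes a genuinely different route from the paper's.

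The paper's argument is shorter and more structural: it uses the fact (established earlier via duality and Baer's criterion, Proposition~\ref{pr:pr=>inj}) that $F$ and $H^{\op}$ are \emph{injective} $\undt$-modules. So whenever $t\neq\id$ one finds an injection $F\hookrightarrow M$ and splits it off for free; once $t=\id$, whenever $p_*\neq 0$ one finds an injection $H^{\op}\hookrightarrow M$ and again splits it off for free. After these two reductions one is left with $t=\id$ and $p_*=0$, and the remaining decomposition into $H$, $\Sth$, and $\Sd$ is a one-line vector space argument. No explicit complement ever needs to be built.

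Your approach, by contrast, extracts the entire $F^f$ block at once and then constructs its complement $D$ by hand via the correction projection $\phi$; in the second stage you still have $H^{\op}$ summands to account for, so your filtration argument on $D$ is a bit more involved than the paper's endgame. The trade-off is that your argument is entirely self-contained linear algebra: it never appeals to injectivity of $F$ or $H^{\op}$, and hence does not depend on Proposition~\ref{pr:pr=>inj} or the $(\blank)^{\op}$ duality. This makes it more elementary, at the cost of the extra bookkeeping with $\phi$. Both approaches yield the same counting formulas, and your plan to recover them by inverting the $5\times 5$ system of invariants is equivalent to the paper's ``go back through the proof and count''.
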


Note that the above decomposition of a module is not canonical.  Also,
we are not saying that the category of $\undt$-modules is semisimple: for
example, there is an evident exact sequence $0\ra \Sth \ra H \ra
\Sd\ra 0$ but $H\ncong \Sth\oplus \Sd$.

\begin{proof}[Proof of Proposition~\ref{pr:Hmod-classify}]
Let $M$ be a finitely-generated $\undt$-module.  If the map $t\colon M\tht\ra
M\tht$ is not the identity, pick $x\in M\tht$ such that $tx\neq
x$.  Since $p^*(p_*x)=x+tx\neq 0$, it follows that $p_*x\neq 0$.
Then the map $F\ra M$ sending the generator $g\tht$ to $x$ is an
injection.  Since $F$ is injective, we have $M\iso F\oplus M'$ for
some $M'$.  Proceeding inductively to keep splitting off copies of
$F$, we reduce to the case where $t\colon M\tht\ra M\tht$ is the
identity.

If $p_*$ is nonzero, pick $x\in M\tht$ such that $p_*(x)\neq 0$.
Then $p^*(p_*x)=x+tx=x+x=0$, and so we get an injection of $\undt$-modules
$H^\op\inc M$.  Again, $H^\op$ is an injective $\undt$-module and so we
split $M\iso H^\op\oplus M'$ for some $M'$.  Continuing in this way,
we reduce to the case where $t=\id$ and $p_*=0$.

Choose a vector space splitting $M\dt=(\ker p^*)\oplus U$.  Likewise,
choose a vector space slitting $M\tht=p^*(U)\oplus V$.  One readily
checks that $M\iso \Sth(V)\oplus \Sd(\ker p^*)\oplus H^n$ where $n=\dim
U$.

The calculation of the number of summands of each type simply comes
from going back through the above proof and counting.
\end{proof}

The only indecomposable objects are $F$, $H$, $H^{\op}$, $\Sth$, and $\Sd$.
We next turn to understanding maps between these.  Recall that maps
$F\ra M$ are in bijective correspondence with elements of $M\tht$, and
maps $H\ra M$ are in bijective correspondence with elements of $M\dt$.
Routine
calculations show that, except for maps from $F$ to itself, there is
at most one nonzero map between any two of the indecomposables.

We will have particular need for understanding maps between the
projective objects $F$ and $H$.
Maps $F\ra H$ are determined by the image of
the generator $g\tht$, and since $H\tht=\Z/2$ there are exactly two
of these: the zero map and the map that sends $g\tht\mapsto
p^*(g\dt)$.  We denote the nonzero map as $p^*$.  Similarly, there is
exactly one nonzero map $H\ra F$, sending $g\dt\mapsto p_*(g\tht)$; we
call this map $p_*$.  The only nonzero map $H\ra H$ is the identity,
and there are three nonzero maps $F\ra F$: $1$, $t$, and $1+t$ (where
$t$ is the map sending $g\tht\mapsto tg\tht$).  As a short summary,
the following diagram depicts all of these nonzero maps:

\[\xymatrix{
F\ar@(ul,dl)[]_{1,t,1+t}
\ar@<0.5ex>[r]^{p^*} & H \ar@<0.5ex>[l]^{p_*} \ar@(ur,dr)[]^{1}
}
\]
Observe that these satisfy the expected relations, e.g.
\[ p^*t=p^*, \ \ tp_*=p_*,\ \  p_*p^*=1+t, \ \ p^*p_*=0,\ \   t^2=1.
\]
Again, a word of caution to the reader: the maps $p^*$ and $p_*$ in the diagram go in the opposite direction of those in a Mackey functor.
Since there is only one nonzero map $F\ra H$ and only one nonzero map
$H\ra F$, we sometimes just
denote both by $p$ since the precise map is clear from context.

For future reference we record how all these maps look on the $\Theta$
and $\cpdot$ sides, with respect to our standard bases for $F$ and $H$:

\vspace{0.2in}

\begin{center}
\bgroup
\def\arraystretch{1.5}
\begin{tabular}{|c||c|c||c|c|}
\hline
& $t$ & $1+t$ & $p^*$ & $p_*$\\
\hline
\hline
$\Theta$ & $\begin{bsmallmatrix} 0 & 1\\ 1 & 0\end{bsmallmatrix}$
&
$\begin{bsmallmatrix} 1 & 1\\ 1 & 1\end{bsmallmatrix}$
& $\begin{bsmallmatrix} 1 & 1\end{bsmallmatrix}$
& $\begin{bsmallmatrix} 1 \\ 1\end{bsmallmatrix}$
\\
\hline
$\cpdot$ & $1$ & $0$ & $0$ & $1$\\
\hline
\end{tabular}
\egroup
\end{center}

\vspace{0.1in}

\subsection{Homological algebra of $\undt$-modules}
\label{se:hom-alg}
We now investigate the homological algebra of $\undt$-modules, making
use of the classification given in the previous section. We have the
following projective resolutions of the nonfree indecomposable modules:
\[ \xymatrixcolsep{1.6pc}\xymatrix{
0\ar[r] & H \ar[r] & F \ar[r] & \Sth\ar[r] & 0, &
0 \ar[r] & H \ar[r] & F \ar[r] & H \ar[r] & \Sd \ar[r] & 0
}
\]
and
\[
\xymatrix{
0\ar[r] & H \ar[r] & F \ar[r]^{1+t} & F \ar[r] \ar[r] & H^{op}\ar[r] & 0.
}
\]
We also have short exact sequences
\[ \xymatrix{
0\ar[r] & \Sth \ar[r] & H \ar[r] & \Sd \ar[r] & 0,
&
0\ar[r] & \Sd \ar[r] & H^{op} \ar[r] & \Sth \ar[r] & 0.
}
\]
The above sequences show that $\Sth$, $\Sd$, and $H^\op$ all have projective
resolutions with at most three nonzero terms.  The following is an immediate
consequence:

\begin{prop}
For any two $\undt$-modules $A$ and $B$ where $A$ is finitely-generated,
we have
$\uExt^i(A,B)=0$ for all $i\geq 3$.
\end{prop}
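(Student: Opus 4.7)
The plan is to reduce to the five indecomposable finitely-generated $\undt$-modules and then invoke the explicit projective resolutions displayed just above the statement.

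First, using Proposition~\ref{pr:Hmod-classify}, I would write $A$ as a finite direct sum of copies of $H$, $H^{\op}$, $F$, $\Sd$, and $\Sth$. Since $\uExt^i(\blank,B)$ is additive in its first variable (finite direct sums in the first slot of $\uHom$ become finite direct products, and the derived functors inherit this), it suffices to check the vanishing when $A$ is one of these five indecomposables.

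Next I would handle the two easy cases: $H$ and $F$ are projective, so $\uExt^i(H,B)=\uExt^i(F,B)=0$ for all $i\geq 1$, which is more than enough. For the remaining three cases $A=\Sth$, $A=\Sd$, and $A=H^{\op}$, the projective resolutions displayed just before the proposition have length $1$, $2$, and $2$ respectively. Computing $\uExt^i(A,B)$ as the cohomology of $\uHom(P_\bullet,B)$ for any such resolution, we conclude $\uExt^i(A,B)=0$ for $i$ strictly greater than the length of the resolution; in particular for $i\geq 3$.

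There is no real obstacle here — once the classification in Proposition~\ref{pr:Hmod-classify} is in hand and the short projective resolutions have been written down, the proof is essentially one line. The only mild point to mention is the additivity of $\uExt^i$ in the first variable, which follows from the fact that a direct sum of projective resolutions of the summands is a projective resolution of the sum, together with the additivity of $\uHom(\blank,B)$.
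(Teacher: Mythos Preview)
Your proof is correct and follows the same approach as the paper: the paper simply notes that the displayed resolutions have at most three nonzero terms and declares the result an immediate consequence, while you have spelled out explicitly the reduction to indecomposables via Proposition~\ref{pr:Hmod-classify} and the additivity of $\uExt$.
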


\vspace{0.1in}

The following proposition lists the results of several routine
calculations in the homological algebra of $\undt$-modules.  Recall that we use $\uHom$ and $\bbox$ to mean $\uHom_{\undt}$ and $\bbox_{\undt}$.

\begin{prop}\label{pr:box_table}
Let $A$ and $B$ be finitely-generated $\undt$-modules.  Then $\uHom(A,B)$
and $A\bbox B$ are given by the following tables:

\vspace{0.2in}

\begin{minipage}{2.5in}
$\uHom(A,B)$:

\vspace{0.1in}
\renewcommand{\arraystretch}{1.2}
\begin{tabular}{|c||c|c|c|c|c|}
\hline
$A\backslash B$ & $H$ & $F$ & $H^{op}$ & $\Sd$ & $\Sth$ \\
\hline\hline
$H$ & $H$ & $F$ & $H^{op}$ & $\Sd$ & $\Sth$ \\
\hline
$F$ & $F$ & $F^2$ & $F$ & $0$ & $F$ \\
\hline
$H^{op}$ & $H$ & $F$ & $H$ & $0$ & $H$ \\
\hline
$\Sd$ & $0$ & $0$ & $\Sd$ & $\Sd$  & $0$ \\
\hline
$\Sth$ & $H$ & $F$ & $\Sth$ & $0$ & $H$   \\
\hline
\end{tabular}
\end{minipage}
%
%
\begin{minipage}{2in}
$A\bbox B$:

\vspace{0.1in}
\renewcommand{\arraystretch}{1.2}
\begin{tabular}{|c||c|c|c|c|c|}
\hline
$A\bbox B$ & $H$ & $F$ & $H^{op}$ & $\Sd$ & $\Sth$ \\
\hline\hline
$H$ & $H$ & $F$ & $H^{op}$ & $\Sd$ & $\Sth$ \\
\hline
$F$ &  & $F^2$ & $F$ & $0$ & $F$ \\
\hline
$H^{op}$ &  &  & $H^{op}$ & $0$ & $H^{op}$ \\
\hline
$\Sd$ &  &  &  & $\Sd$  & $0$ \\
\hline
$\Sth$ &  &  &  &  & $H^{op}$   \\
\hline
\end{tabular}
\end{minipage}

\vspace{0.2in}

\noindent
For the cases where $A$ is not projective and $B$ is not injective,
the Mackey functors $\uExt^i(A,B)$ are as follows:

\vspace{0.2in}

$\uExt^i(A,B)$:

\vspace{0.1in}

\renewcommand{\arraystretch}{1.2}
\begin{tabular}{|c||c|c|c|c|c|c|c|c|}
\hline
$A$,$B$ & $H^{op}$,$H$ & $H^{op}$, $\Sth$ & $H^{op}$, $\Sd$ &
 $\Sth$, $H$ & $\Sth$, $\Sth$ & $\Sth$, $\Sd$
& $\Sd$, $\Sth$ & $\Sd$, $\Sd$ \\ \hline
2& $\Sd$ & $0$ & $\Sd$ & $0$ & $0$ & $0$ &  $\Sd$ & $\Sd$ \\ \hline
1 & $\Sd$ & $\Sd$ & $0$ & $\Sd$ & $0$ & $\Sd$  & $\Sd$ & $0$
\\ \hline
0 & $H$ & $H$ & $0$ & $H$ & $H$ & $0$  & $0$ & $\Sd$ \\ \hline
\end{tabular}

\vspace{0.2in}
\noindent
Finally, for the cases where neither $A$ nor $B$ is free the Mackey
functors $\uTor_i(A,B)$ are as follows:

\vspace{0.2in}

$\uTor_i(A,B)$:

\vspace{0.1in}

\renewcommand{\arraystretch}{1.2}
\begin{tabular}{|c||c|c|c|c|c|c|}
\hline
 & $H^{op}$,$H^{op}$ & $H^{op}$, $\Sth$ & $H^{op}$, $\Sd$ &
 $\Sth$, $\Sth$ & $\Sth$, $\Sd$ & $\Sd$, $\Sd$
\\ \hline
2& $\Sd$ & $0$ & $\Sd$ & $0$ & $0$ & $\Sd$  \\ \hline
1 & $\Sd$ & $\Sd$ & $0$ & $0$ & $\Sd$ & $0$ \\ \hline
0 & $H^{op}$ & $H^{op}$ & $0$ & $H^{op}$ &  $0$ & $\Sd$ \\ \hline
\end{tabular}
\vspace{0.1in}
\end{prop}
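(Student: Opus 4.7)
The plan is to verify the tables entry-by-entry via the explicit descriptions of $\bbox$ and $\uHom$ in Section~\ref{se:background}, and then to use the three short projective resolutions displayed immediately before the proposition to reduce $\uExt^i$ and $\uTor_i$ to routine homology computations. The classification in Proposition~\ref{pr:Hmod-classify}, together with the simple dimension and rank counts appearing there, gives a uniform way to identify the output Mackey functor in each case.

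For the $\bbox$ table I would apply the definition $(A\bbox B)\tht = A\tht\tens_{\Z/2} B\tht$ and the generators-and-relations description of $(A\bbox B)\dt$ directly to the five indecomposables; since $\bbox$ is symmetric only the upper triangle must be filled in, as the table reflects. For the $\uHom$ table I would use $\uHom(A,B)\dt = \Hom_{\undt}(A,B)$ together with the identification $\uHom(A,B)\tht = \Hom_{\Z/2}(A\tht,B\tht)$, which follows from Example~\ref{ex:box-with-F} and the $F\tht$ adjunction, reading the Mackey-functor structure from the $p^*$, $p_*$, $t$ maps that they induce on these Hom groups.

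For $\uExt^i$ and $\uTor_i$ one first observes that $F$ is both projective and injective, while $H$ is projective with injective dual $H^{op}$; this immediately explains the table's restriction to $A$ non-projective and (for $\uExt$) $B$ non-injective, or (for $\uTor$) neither $A$ nor $B$ free. For each of the three non-projective indecomposables $H^{op}$, $\Sth$, $\Sd$, I would apply $\uHom(\blank,B)$ or $\blank\bbox B$ to the listed projective resolution and read off cohomology or homology using entries already computed in the $\uHom$ and $\bbox$ tables. Duality via Proposition~\ref{pr:pr=>inj}, in the form $\uHom(A,B)^{op}\iso \uHom(B^{op},A^{op})$, cuts the labor nearly in half, since it exchanges the roles of $H$ with $H^{op}$ and $\Sth$ with $\Sd$; I would exploit this both as a shortcut and as a consistency check.

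The only real obstacle is bookkeeping: tracking how the $t$-action, transfer $p_*$, and restriction $p^*$ propagate through each tensor-hom identification, particularly for $H^{op}$ whose structure maps are ``reversed'' relative to $H$. No individual entry requires a new idea, and the counting formulas of Proposition~\ref{pr:Hmod-classify}---identifying a module by $(\dim M\tht,\dim M\dt)$ together with the ranks of $t$, $p^*$, and $p_*$---prevent misidentifications by giving each cell of the tables an unambiguous fingerprint.
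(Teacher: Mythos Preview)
Your approach is correct and matches the paper's, which simply says ``Lots of calculations, but completely routine''; your plan is precisely the expanded version of that sentence.

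One small correction to your duality shortcut: the natural isomorphism is $\uHom(A,B)\iso\uHom(B^{\op},A^{\op})$ (transpose), without the extra $(\blank)^{\op}$ on the left-hand side, and for $\ell=2$ the functor $(\blank)^{\op}$ fixes $\Sth$ and $\Sd$ individually rather than exchanging them (since $t=\id$ on $\Sth$ and both have one side zero). You can see this directly in the $\uHom$ table: e.g.\ $\uHom(\Sth,H)=H=\uHom(H^{\op},\Sth)$, not $H^{\op}$. This does not affect your main computational strategy, which stands.
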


\begin{proof}
Lots of calculations, but completely routine.
\end{proof}

\begin{remark}
Notice in the above tables that $M\bbox F$, $\uHom(M,F)$, and
$\uHom(F,M)$ are always sums of copies of $F$, no matter what $M$ is.
This can be explained by the observation that $F$ can be given the structure
of a Mackey field over $\undt$: it is the Mackey field
\[ \xymatrix{*{\phantom{-.} \F_4\ }  \ar@(ul,dl)[]_t \ar@<0.5ex>[r]^\tr  & \F_2 \ar@<0.5ex>[l]^{i}
}
\]
where $t$ is the nontrivial automorphism.
The above constructions have natural
structures of $F$-modules, and over a Mackey field all modules are
free.

Likewise, one might notice that for $i>0$ one always has $\uExt^i(A,B)\tht=0=
\uTor_i(A,B)\tht$.  This follows from the observation that for any
Mackey functor $M$ one has $M\tht=0$ if and only if $M\bbox F=0$
(because $M\bbox F=F\tht(M\tht)$ as in Example~\ref{ex:box-with-F}).  Then one calculates that
\[ \uTor_i(A,B)\bbox F=\uTor_i(A,B\bbox F), \qquad \uExt^i(A,B)\bbox
F=\uExt^i(A,B\bbox F)
\]
and uses the fact that $B\bbox F$ is a finite direct sum of copies of
$F$, making it both projective (hence flat) and injective.
\end{remark}


\section{Complexes of $\protect\undr{2}$-modules}
\label{se:complexes}

If $\cA$ is any abelian category we can consider the category of chain
complexes $\Ch(\cA)$ and the associated homotopy category
$\Ho(\Ch(\cA))$ obtained by inverting quasi-isomorphisms.  This is
also known as the derived category $\cD(\cA)$.  As usual, if $R$ is a
Mackey ring and
$\cA=R\MMod$ this homotopy category will also be denoted $\cD(R)$.
Our goal is to understand what we can about the structure of
$\cD(\undt)$.

\medskip

We will occasionally have to distinguish between constructions on
$\Ch(\undt)$ and their derived versions on $\cD(\undt)$.  The derived version
of the box product will be denoted $\dbox$, and the derived version of
the cotensor will be denoted $\dcotens(\blank,\blank)$.

A complex in $\Ch(\undt)$ is called \dfn{perfect} if it is
quasi-isomorphic to a bounded complex that in each degree is a
direct sum of finitely-many copies of $H$ and $F$.
These are the compact objects in $\cD(\undt)$, by the same argument as
given in \cite[Proposition 6.4]{BN}.    Write
$\cD(\undt)_{\perf}$ for
the full subcategory of $\cD(\undt)$ whose objects are the perfect complexes.
Our first goal is to
completely classify these objects.

The full subcategory of $\undt\MMod$ whose objects are $F$ and $H$ is, by inspection,
isomorphic to the full subcategory of $\Z/2[C_2]$-modules consisting of
$\Z/2[C_2]$ and $\Z/2$ (with $C_2$ acting trivially).  All
finitely-generated $\Z/2[C_2]$-modules are direct sums of these two types,
so the full
subcategory of finitely-generated free $\undt$-modules is isomorphic to
the category of finitely-generated $\Z/2[C_2]$-modules.  Comparing the
box products of $F$ and $H$ from Proposition~\ref{pr:box_table} with
the tensor products of $\Z/2[C_2]$ and $\Z/2$, we see the symmetric
monoidal structures agree.  Finally, recall that $\cD(\cR)_{\perf}$
for any ring (or  Mackey-ring) $\cR$ can be
modeled by the category of bounded complexes of
finitely-generated projectives and chain homotopy classes
of maps.  The following
is an immediate consequence:

\begin{prop}
\label{pr:K-category}
The map $M\mapsto M\tht$ gives an equivalence of symmetric monoidal categories $\cD(\undt)_{\perf}\he
K_{b,fg}(\Z/2[C_2])$, where $K$ denotes the category of
chain complexes and chain homotopy classes of maps and ``$b$, $fg$''
indicate bounded complexes of finitely-generated modules.
\end{prop}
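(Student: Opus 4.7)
The plan is to produce the equivalence in three stages: first replace $\cD(\undt)_{\perf}$ by a naive homotopy category of bounded complexes of projectives; then use the additive equivalence sketched in the paragraph preceding the proposition to transport this to $K_{b,fg}(\Z/2[C_2])$; and finally check that the symmetric monoidal structure is carried along.

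For the first stage, let $\cP\subseteq \undt\MMod$ denote the full additive subcategory spanned by finite direct sums of $H$ and $F$. By definition, every perfect complex is quasi-isomorphic to a bounded complex of objects of $\cP$, so essential surjectivity of $K_b(\cP)\to\cD(\undt)_{\perf}$ is immediate. Full faithfulness is the standard homological-algebra fact that between bounded complexes of projectives, derived-category maps coincide with chain-homotopy classes of maps; this applies because $H$ and $F$ are projective $\undt$-modules. Thus $K_b(\cP)\simeq \cD(\undt)_{\perf}$ as additive categories.

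For the second stage, the paragraph before the proposition observes that the functor $\ev\tht\colon \cP\to \Z/2[C_2]\MMod$ is fully faithful onto its essential image, which consists of the two indecomposable summand types $\Z/2$ (sending $H$) and $\Z/2[C_2]$ (sending $F$). Since every finitely-generated $\Z/2[C_2]$-module is a finite direct sum of these two indecomposables (by Maschke's theorem failure being compensated by the fact that $\Z/2[C_2]$ is a local $\Z/2$-algebra, so we use Krull--Schmidt directly), the functor $\ev\tht\colon \cP\to \Z/2[C_2]\!-\!\lMod_{fg}$ is an additive equivalence. Applying this equivalence objectwise to bounded complexes, and using that chain-homotopies are defined purely in terms of additive data, yields an induced equivalence $K_b(\cP)\simeq K_{b,fg}(\Z/2[C_2])$.

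For the third stage, we check the symmetric monoidal structure. The box product on $\cP$ is closed inside $\cP$, because Proposition~\ref{pr:box_table} shows $F\bbox F\iso F^{\,2}$, $F\bbox H\iso F$, and $H\bbox H\iso H$, which under $\ev\tht$ correspond exactly to the tensor products $\Z/2[C_2]\otimes\Z/2[C_2]\iso \Z/2[C_2]^{\,2}$, $\Z/2[C_2]\otimes \Z/2\iso \Z/2[C_2]$, and $\Z/2\otimes\Z/2\iso\Z/2$; the unit $H$ maps to the unit $\Z/2$. The associativity and symmetry isomorphisms on the $\Theta$-side are just the usual ones on tensor products of $\Z/2$-vector spaces, so all coherence data match. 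Passing to complexes, the monoidal product is given by the usual totalization formula $(X\bbox Y)_n=\bigoplus_{p+q=n}X_p\bbox Y_q$, and this is preserved by $\ev\tht$ applied objectwise. Therefore the equivalence $\cD(\undt)_{\perf}\simeq K_{b,fg}(\Z/2[C_2])$ is symmetric monoidal.

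The main point requiring care is the first stage, where one must verify that $H$ and $F$ are actually projective (recorded earlier in Section~\ref{se:background}) and that replacement by a bounded complex of these projectives does not change the mapping spaces in the derived category; everything else is a formal transport of structure along the additive equivalence of the two $2$-object subcategories.
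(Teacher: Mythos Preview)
Your proof is correct and follows essentially the same approach as the paper: the paper's argument is entirely contained in the paragraph preceding the proposition, which records the additive equivalence $\cP\simeq \Z/2[C_2]\!-\!\lMod_{fg}$ via $\ev\tht$ and the matching of box products with tensor products, and then declares the result an immediate consequence. You have simply spelled out the standard steps (identifying $\cD(\undt)_{\perf}$ with $K_b(\cP)$ via projectivity of $H$ and $F$, transporting along the additive equivalence, and checking the monoidal coherence) that the paper leaves implicit.
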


The above proposition is not required for our classification, but it
is informative to identify our main problem with a more classical
problem from ordinary ring theory.  We are grateful to Paul Balmer for
pointing out this connection.\medskip

We now isolate the
following special classes of perfect complexes.  By a \dfn{strand}, we
mean a perfect complex that in each degree is equal to either $H$, $F$, or
$0$, with all nonzero terms consecutive, and where all maps between nonzero terms are nonzero.  It is not hard to determine all possible strands. In addition
to the contractible complexes that have a single isomorphism and zero elsewhere
(e.g.\ $t:F\to F$), we have the following complete list of strands.

For $k\geq 0$, let $A_k$ be the complex
\[
A_k\colon \qquad\qquad \xymatrix{0 \ar[r] &
F\ar[r]^{1+t} & F\ar[r]^{1+t} & F \ar[r]^{1+t} & \cdots
\ar[r]^{1+t} & F\ar[r] & 0,
}
\]
where the nonzero groups are in degrees $0$
through $k$ (note that our complexes use homological grading, where
the differentials decrease degree).  For $n>0$, let $H(-n)$ be the complex
\[ H(-n)\colon \qquad\qquad
\xymatrix{
0\ar[r] & H \ar[r]^p& F \ar[r]^{1+t} &F \ar[r]^{1+t} & \cdots
\ar[r]^{1+t} &F \ar[r] & 0,
}
\]
where $H$ is in degree $n$ and the rightmost $F$ is in degree $0$.
Likewise (still for $n>0$), let $H(n)$ be the complex
\[ H(n)\colon \qquad\qquad
\xymatrix{
0\ar[r] & F \ar[r]^{1+t} &F \ar[r]^{1+t} &\cdots \ar[r]^{1+t} &F \ar[r]^{p} &H \ar[r]
&0,
}
\]
where the leftmost $F$ is in degree $0$ and the $H$ is in degree $-n$.  Let $H(0)$ be the complex $0 \lra H \lra 0$ with $H$ in degree $0$.
 Finally, let $B_r$ be the complex
\[
B_r\colon \qquad
\xymatrix{
0\ar[r] & H \ar[r]^{p} & F \ar[r]^{1+t} & F \ar[r]^{1+t} &\cdots \ar[r]^{1+t} &F \ar[r]^{p}
& H \ar[r] & 0,
}
\]
where the leftmost $H$ is in degree $0$ and the rightmost $H$ is in
degree $-(r+2)$.

We call $A_k$, $B_r$, and $H(n)$ (allowing $k,r \geq 0$ and $n \in \Z$) our {\bf fundamental
  complexes\/}.  There are evident homotopy cofiber
sequences
\[ A_0\lra A_k \lra \Sigma A_{k-1},\qquad B_0\lra B_{r} \lra
\Sigma^{-1}B_{r-1},\]
\[ A_{n-1}\lra H(-n)\lra \Sigma^n H, \qquad \Sigma^{-(n+2)}H \lra B_n
\lra \Sigma^{-(n+1)}H(-(n+1)),
\]
as well as many others which the reader can readily identify.

\begin{remark}
Note that $\Sth\he H(-1)$, $\Sd\he \Sigma^2 B_0$, and $H^{op}\he
H(-2)$, using the resolutions from Section~\ref{se:hom-alg}.
\end{remark}

It will turn out that for all $n\in \Z$, $H(n)$ is invertible in $\cD(\undt)$, and in fact the
objects $\Sigma^p H(n)$ constitute all the invertible objects in
$\cD(\undt)$.  The inverse of $H(n)$ is $H(-n)$, and more generally
one has $H(n)\bbox H(m)\he H(n+m)$.  See Proposition~\ref{pr:dbox} below
for this and related facts, as well as Theorem~\ref{th:Picard}.

\medskip

For $C$ any object in $\cD(\undt)$, define
\[ H^{p,q}(C)=\cD(\undt)(C,\Sigma^{p} H(q)).
\]
The derived box product of maps makes  $\bigoplus_{p,q} H^{p,q}(H)$ into a bigraded ring, which
we denote $\M_2$, and $\bigoplus_{p,q} H^{p,q}(C)$ is a bigraded
$\M_2$-module.  We record the bigraded module structure on a grid, where
each dot indicates a $\Z/2$, and the vertical and diagonal lines indicate
action by certain elements $\tau\in H^{0,1}(H)$ and $\rho\in H^{1,1}(H)$.

The ring $\M_2$ is shown in Figure~\ref{fig:M2}.  The ring structure
is completely determined by the picture (showing $\tau$- and
$\rho$-multiplications) together with the assertion that $\theta^2=0$, from which it follows that the product of any two classes in the
``lower cone'' descending from $\theta$ is also zero.  This
computation, as well as Proposition~\ref{pr:cohom-rings} below,
is jumping ahead a bit, but it is useful to see these
 calculations right away to get a sense of the fundamental complexes.
 More information about these calculations and their consequences will be given in Section~\ref{se:distinguish} below.

\begin{figure}[ht]
\begin{tikzpicture}[scale=0.7]
\draw[step=1cm,gray,very thin] (-4.5,-2.5) grid (4.5,6.5);
\draw[] (-4.5,2) -- (4.5,2) node[below, black] {\small $p$};
\draw[] (0,-2.5) -- (0,6.5) node[left, black] {\small $q$};

\foreach \y in {2,...,5}
  \foreach \x in {\y,...,2}
    \fill (\x-1.5,\y+0.5) circle (3pt);

\foreach \x in {0,...,3}
  \draw[thick,->] (\x+0.5,\x+2.5) -- (\x+0.5,6.25);
\foreach \y in {2,...,5}
  \draw[thick,->] (0.5,\y+0.5) -- (6.25-\y,6.25);

\foreach \x in {-2,...,0}
  \foreach \y in {-2,...,\x}
    \fill (\x+0.5,\y+0.5) circle (3pt);

\foreach \x in {-2,...,0}
  \draw[thick,->] (\x+0.5,\x+0.5) -- (\x+0.5,-2.25);
\foreach \y in {-2,...,0}
  \draw[thick,->] (0.5,\y+0.5) -- (-2.25-\y,-2.25);

\draw[] (0.5,3.25) node[left,black] {\small $\tau$};
\draw[] (0.5,2.25) node[left,black] {\small $1$};
\draw[] (1.95,3.25) node[left,black] {\small $\rho$};
\draw[] (1.1,0.35) node[left,black] {\small $\theta$};
\draw[] (1.1,-0.65) node[left,black] {\small $\frac{\theta}{\tau}$};
\draw[] (-0.6,-0.55) node[left,black] {\small $\frac{\theta}{\rho}$};
\end{tikzpicture}
\caption{The ring $\M_2=H^{*,*}(H)$.}
\label{fig:M2}
\end{figure}

\begin{prop}
\label{pr:cohom-rings}
The cohomology groups $H^{p,q}(C)$ for $C\in \{A_k,H(n),B_r\}$
are given
by the following pictures:
\begin{figure}[ht]
\begin{tikzpicture}[scale=0.65]
\draw[step=1cm,gray,very thin] (-1.5,-2.5) grid (3.5,4.5);
\draw[] (-1.5,0) -- (3.5,0) node[below, black] {\small $p$};
\draw[] (0,-2.5) -- (0,4.5) node[left, black] {\small $q$};

\draw[thick,<->] (0.5,-2.5) -- (0.5,4.5);
\foreach \y in {-2,...,3}
  \fill (0.5, \y+0.5) circle (3pt);

\end{tikzpicture}\hspace{1cm}
\begin{tikzpicture}[scale=0.65]
\draw[step=1cm,gray,very thin] (-1.5,-2.5) grid (5.5,4.5);
\draw[] (-1.5,0) -- (5.5,0) node[below, black] {\small $p$};
\draw[] (0,-2.5) -- (0,4.5) node[left, black] {\small $q$};

\foreach \y in {-2,...,3}
  \foreach \x in {0,...,4}
    \fill (\x+0.5, \y+0.5) circle (3pt);

\foreach \x in {0,...,4}
  \draw[thick,<->] (\x+0.5,-2.5) -- (\x+0.5,4.5);

\draw[thick] (0.5,-1.5) -- (4.5,2.5);
\draw[thick] (0.5,-0.5) -- (4.5,3.5);
\draw[thick] (0.5,0.5) -- (4.25,4.25);
\draw[thick] (0.5,1.5) -- (3.25,4.25);
\draw[thick] (0.5,2.5) -- (2.25,4.25);
\draw[thick] (0.5,3.5) -- (1.25,4.25);

\draw[thick] (0.75,-2.25) -- (4.5,1.5);
\draw[thick] (1.75,-2.25) -- (4.5,0.5);
\draw[thick] (2.75,-2.25) -- (4.5,-0.5);
\draw[thick] (3.75,-2.25) -- (4.5,-1.5);

\end{tikzpicture}
\caption{The cohomology of $A_0$ and $A_4$.}
\label{fig:A}
\end{figure}

\begin{figure}[ht]
\begin{tikzpicture}[scale=0.65]
\draw[step=1cm,gray,very thin] (-4.5,-2.5) grid (4.5,6.5);
\draw[] (-4.5,0) -- (4.5,0) node[below, black] {\small $p$};
\draw[] (0,-2.5) -- (0,6.5) node[left, black] {\small $q$};

\foreach \y in {2,...,5}
  \foreach \x in {\y,...,2}
    \fill (\x-1.5,\y+0.5) circle (3pt);

\foreach \x in {0,...,3}
  \draw[thick,->] (\x+0.5,\x+2.5) -- (\x+0.5,6.25);
\foreach \y in {2,...,5}
  \draw[thick,->] (0.5,\y+0.5) -- (6.25-\y,6.25);

\foreach \x in {-2,...,0}
  \foreach \y in {-2,...,\x}
    \fill (\x+0.5,\y+0.5) circle (3pt);

\foreach \x in {-2,...,0}
  \draw[thick,->] (\x+0.5,\x+0.5) -- (\x+0.5,-2.25);
\foreach \y in {-2,...,0}
  \draw[thick,->] (0.5,\y+0.5) -- (-2.25-\y,-2.25);

\end{tikzpicture}
\caption{The cohomology of $H(2)$.}
\label{fig:H}
\end{figure}

\begin{figure}[ht]
\begin{tikzpicture}[scale=0.65]
\draw[step=1cm,gray,very thin] (-4.5,-1.5) grid (5.5,5.5);
\draw[] (-4.5,0) -- (5.5,0) node[below, black] {\small $p$};
\draw[] (0,-1.5) -- (0,5.5) node[left, black] {\small $q$};

\foreach \y in {0,...,5}
  \fill (\y-3.5,\y-0.5) circle (3pt);
\foreach \y in {0,...,5}
  \fill (\y-2.5,\y-0.5) circle (3pt);
\foreach \y in {0,...,5}
  \fill (\y-1.5,\y-0.5) circle (3pt);
\foreach \y in {0,...,5}
  \fill (\y-0.5,\y-0.5) circle (3pt);

\foreach \x in {-1,...,-4}
  \draw[thick,<->] (\x-0.5,-1.5) -- (\x+6.5,5.5);
\foreach \x in {-4,...,-2}
  \draw[thick] (\x+0.5,\x+3.5) -- (\x+0.5,-1.15);
\foreach \x in {-1,...,1}
  \draw[thick] (\x+0.5,\x+3.5) -- (\x+0.5,\x+0.5);
\foreach \x in {2,...,4}
  \draw[thick] (\x+0.5,\x+0.5) -- (\x+0.5,5.15);

\end{tikzpicture}
\caption{The cohomology of $B_3$.}
\label{fig:D}
\end{figure}

\begin{enumerate}[(i)]
\item $A_k$: Vertical strip stretching from $p=0$ through $p=k$. See Figure \ref{fig:A}.
\item $H(n)$: Copy of $\M_2$ generated by a class in bidegree $(0,n)$. Here $n$ can be positive or negative. See Figure \ref{fig:H}.
\item $B_r$: Diagonal strip occupying the diagonals $p-q=0$ through
$p-q=-r$.  See Figure \ref{fig:D}.
\end{enumerate}

In more algebraic terms, we have
\begin{enumerate}[(i)]
\item $H^{*,*}(A_k)=\M_2[\tau^{-1}]/(\rho^{k+1})$,
\item $H^{*,*}(H(n))=\M_2\langle e\rangle$ where $e$ has bidegree
$(0,n)$,
\item $H^{*,*}(B_r)=\M_2[\rho^{-1}]/(\tau^{r+1})$.
\end{enumerate}
\end{prop}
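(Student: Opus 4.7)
The plan is to reduce each of the three statements to a direct chain-level computation, exploiting the projectivity of $F$ and $H$. For any bounded complex $C$ built from $F$ and $H$ and any complex $D$, we have
$$\cD(\undt)(C, \Sigma^p D) \iso H^p\bigl(\Hom^\bullet_{\undt}(C, D)\bigr),$$
and the identities $\Hom_{\undt}(F, M) = M\tht$ and $\Hom_{\undt}(H, M) = M\dt$ turn each such Hom complex into an explicit (double) complex of $\F_2$-vector spaces. The first step I would record is the bookkeeping on the $\cpdot$-side: $(1+t)\dt = p_*p^* = 0$ in $\F_2$, the map $p\colon F \to H$ is zero on $\cpdot$, but the map $p\colon H \to F$ is the identity on $\cpdot$; the $\Theta$-side maps are read off from Section~\ref{se:modules}.

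For (ii), I would first handle $n = 0$. Applying the reduction with $C = H$, the Hom complex is just $(H(q))\dt$. For $q \geq 0$ all differentials vanish, giving $\Z/2$ in each degree $0, -1, \ldots, -q$; for $q = -n < 0$ the leftmost differential is the identity, killing the two endpoints and leaving $\Z/2$ in degrees $0, 1, \ldots, n-2$. Collecting these over $q \in \Z$ reproduces exactly the picture of $\M_2$ in Figure~\ref{fig:M2}, with $1, \tau, \rho$ in the upper cone and $\theta$ at the top of the lower cone; the multiplicative structure and the relation $\theta^2 = 0$ are extracted by choosing chain-level representatives and composing. For general $n$ I would invoke the invertibility $H(n) \dbox H(-n) \he H$ proved in Proposition~\ref{pr:dbox} to obtain $H^{p,q}(H(n)) \iso H^{p,q-n}(H)$, shifting $\M_2$ so the generator sits at bidegree $(0, n)$.

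For (i) and (iii) the key additional input is that $(H(q))\tht$ is quasi-isomorphic to $\Z/2$ concentrated in degree $0$ for every $q \in \Z$; this is a direct check, since $(H(q))\tht$ is a bounded piece of the standard resolution of the trivial $\F_2[C_2]$-module. For (i), the Hom complex $\Hom^\bullet_{\undt}(A_k, H(q))$ is a double complex consisting of $k+1$ copies of $(H(q))\tht$ strung together by $(1+t)\tht$-maps; acyclicity of $(H(q))\tht$ off degree $0$ collapses one spectral sequence and leaves $\Z/2$ in each bidegree $(p, q)$ with $0 \leq p \leq k$, giving the vertical strip. For (iii), $B_r$ has $H$'s at both endpoints and $F$'s in between, so $\Hom^\bullet_{\undt}(B_r, H(q))$ combines $(H(q))\tht$-contributions in the interior with $(H(q))\dt$-contributions at the two ends; a parallel analysis produces the diagonal strip. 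The $\M_2$-module structure $\M_2[\tau^{-1}]/(\rho^{k+1})$ (resp.\ $\M_2[\rho^{-1}]/(\tau^{r+1})$) is then checked by observing that $\tau$ shifts vertically (invertibly inside the strip of $A_k$, nilpotently across the strip of $B_r$) while $\rho$ shifts diagonally (nilpotently on $A_k$, invertibly on $B_r$).

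The main obstacle is bookkeeping. Although the group-level cohomology is mechanical once the two basic chain-level facts above are in hand, verifying the full bigraded $\M_2$-module structure—in particular the nilpotency relations $\rho^{k+1} = 0$ and $\tau^{r+1} = 0$ together with invertibility of the complementary generator—requires picking explicit chain-level representatives of $\tau$ and $\rho$ and tracking signs carefully through the Hom complex. The forward reference to Proposition~\ref{pr:dbox} is logically safe, since that proposition can be proved without circular dependence on the present one.
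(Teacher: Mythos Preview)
Your approach is correct and takes a genuinely different route from the paper. The paper computes these cohomologies almost entirely via cofiber sequences: it first pins down $H^{*,*}(F)$ and $H^{*,*}(B_0)$ directly, imports periodicities from Proposition~\ref{pr:dbox} ($F\bbox H(1)\he F$, $B_0\bbox H(1)\he\Sigma^{-1}B_0$, etc.), then reads off $\tau$- and $\rho$-multiplications in $\M_2$ from the cofiber sequences of $\tau$ and $\rho$, and finally builds up $H^{*,*}(A_k)$ and $H^{*,*}(B_r)$ inductively from $A_{k-1}\to A_k\to\Sigma^k F$ and $B_{r-1}\to B_r\to\Sigma^{-r}B_0$. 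Your argument instead writes down the Hom double complex and runs a spectral sequence, using the single lemma that $(H(q))\tht\he\Z/2[0]$ for all $q$. Your method gives the bigraded groups in one shot with less external input; the paper's method makes the $\M_2$-module structure (especially the $\rho^{k+1}=0$ and $\tau^{r+1}=0$ relations) fall out automatically from the long exact sequences, whereas you have to go back and track explicit chain representatives of $\tau$ and $\rho$.

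One point you should make explicit: in the spectral sequence for $A_k$, knowing that $(H(q))\tht$ is acyclic off degree $0$ only tells you the $E_1$ page is concentrated in a single row; it does not by itself collapse the spectral sequence, since $d_1$ still runs along that row. You need the additional observation that precomposition with $1+t\colon F\to F$ induces the zero map on $H_0((H(q))\tht)$ (because that homology class lies in, or maps to, the $(1+t)$-invariants), which forces $d_1=0$ and hence degeneration at $E_2$. The analogous check for $B_r$ is messier, since the two $H$-endpoints contribute $(H(q))\dt$-terms that interact with the interior $(H(q))\tht$-terms via $p^*$ and $p_*$; this is where most of the bookkeeping you allude to actually lives.
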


The proof of Proposition~\ref{pr:cohom-rings} appears in
Section~\ref{se:distinguish} below.

\begin{remark}
\label{re:M2-ag}
The ring $\M_2$ also appears in algebraic geometry, but with a
different grading: it is the ring $\bigoplus_{p,q} H^p(\PF^1,\cO(q))$
(over the ground field $\F_2$).  But in this context the ``upper
cone'' is entirely concentrated in degree $p=0$ whereas the ``lower
cone'' is entirely concentrated in degree $p=1$.  
\end{remark}

The following is our main algebraic classification theorem.  In short, it says every bounded complex of projectives splits into strands.  Before stating it we need one more
piece of notation.  If $M$ is any $\undt$-module, let $\bD(M)$ denote the
chain complex with $M$ in degrees $0$ and $1$, zeros in all other
degrees, and with the differential $\bD(M)_1\ra \bD(M)_0$ given by the identity map.
Notice that $\bD(M)$ is contractible (and here $\bD$ stands for `disk').

\begin{thm}
\label{th:complex-splitting}
Let $C$ be a bounded complex that in each degree is given by a finite direct
sum of copies of $F$ and $H$.  Then $C$ is isomorphic to a direct sum
of shifts of the complexes $A_k$, $B_r$, $H(n)$, for various values of $k,r \geq 0$ and $n \in \Z$,
and the contractible complexes $\bD(F)$ and $\bD(H)$.
\end{thm}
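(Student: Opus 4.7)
The plan is to argue by induction on the total dimension $\sum_n\bigl[\dim_{\Z/2}(C_n)\tht+\dim_{\Z/2}(C_n)\dt\bigr]$, at each stage splitting off one direct summand of the required type and leaving a smaller complex that is still built degree-wise from $F$'s and $H$'s.

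The first step is a normalization. By the table of nonzero maps between $F$ and $H$, every matrix entry of a differential lies in $\{0, 1, t, 1+t, p^*, p_*\}$, and the only non-radical entries are the units $1$ on an $F\to F$ or $H\to H$ block and $t$ on an $F\to F$ block. Wherever such a unit appears, a Gaussian-style change of basis (legal because $t^{-1}=t$ and because radical entries can be freely added between rows and columns in our category) produces an identity sub-matrix that peels off as a contractible $\bD(F)$ or $\bD(H)$ summand. Iterating, we may reduce to the case that the complex is \emph{reduced}, meaning every entry of every differential belongs to the radical set $\{0,\, 1+t,\, p^*,\, p_*\}$.

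Next I would locate a single fundamental strand inside a reduced complex. The key computational input is that among compositions of radical entries only $p_*\circ p^* = 1+t$ is nonzero: $(1+t)^2$, $p^*\circ p_*$, $p^*\circ(1+t)$, and $(1+t)\circ p_*$ all vanish. Starting from a basis element in the highest nonzero degree, I would trace the path of successive differentials; the above vanishings force this path to be essentially linear, and the condition $d^2=0$ constrains where it can terminate. After absorbing any branches into the complement by change of basis in each degree, one should arrive at a sub-complex whose generators form a single chain of $F$'s and $H$'s connected by radical entries in exactly the shape of one of $A_k$, $B_r$, or $H(n)$ — the three cases distinguished by whether each end of the chain is uncapped, capped by $p_*$, or capped by $p^*$.

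The main obstacle is to promote this sub-complex to a \emph{direct} summand whose complement is itself degree-wise a sum of $F$'s and $H$'s. The surviving composition $p_*\circ p^* = 1+t$ means that distinct paths through the complex can interact via forced cancellations, and the ``obvious'' complementary basis typically needs radical corrections before the remaining basis vectors span a sub-complex closed under the differential. Verifying that such corrections always exist, and keeping the bookkeeping consistent across degrees, is the heart of the algorithm. Once the strand is split off, the complement has strictly smaller dimension and the induction applies.
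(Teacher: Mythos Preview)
Your normalization step---peeling off $\bD(F)$ and $\bD(H)$ by Gaussian elimination on the unit entries $1$ and $t$---is correct and corresponds exactly to Step~1 of the paper's algorithm. Your identification of the radical entries and their composition table (only $p_*\circ p^*=1+t$ survives) is also correct, and is indeed what forces strands to have the shapes $A_k$, $B_r$, $H(n)$.

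But the proposal does not actually prove the theorem: you explicitly name the main obstacle---promoting a traced sub-complex to a direct summand whose complement is still degree-wise free---and then say only that ``verifying such corrections always exist \ldots\ is the heart of the algorithm,'' without supplying that verification. This is precisely the nontrivial content. Naively tracing a path downward from a top basis element and ``absorbing branches by change of basis'' can fail: when a single top generator hits several targets below, the choice of \emph{which} target to follow matters, and the wrong choice produces a complement that is not closed under $d$ (equivalently, the diagonal basis change propagated down the chosen path spills over at the bottom). The paper resolves this by inducting on the top nonzero degree rather than on total dimension: at the inductive step everything below degree $m$ is already decomposed into strands, so one can read off the length and terminal type ($H$ or $F$) of each strand being hit from degree $m$. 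The selection rule is then: to split off a strand terminating in $H$ (a $B_r$ or an $H(n)$ with $n>0$) attach to the \emph{shortest} available target strand; to split off a strand terminating in $F$ (an $H(-n)$ or an $A_k$) attach to the \emph{longest}. It is exactly this shortest/longest discipline that guarantees the propagated basis change terminates cleanly at the bottom of the chosen strand. Your top-down tracing gives no access to this information, and without an analogous selection principle the absorption step is unjustified.
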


\begin{proof}
The proof is technical and we defer it to its own
section.  See Section~\ref{se:complex-classify-proof}.
\end{proof}

\begin{cor}
\label{co:perfect-complexes-splitting}
Any perfect complex is quasi-isomorphic to a direct sum of shifts of
the fundamental complexes $A_k$, $B_r$, and $H(n)$, for various values of $k,r \geq 0$ and $n \in \Z$.
\end{cor}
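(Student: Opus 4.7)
The plan is to deduce the corollary directly from Theorem~\ref{th:complex-splitting} essentially by observing that passing from isomorphism in $\Ch(\undt)$ to quasi-isomorphism in $\cD(\undt)$ kills the disk summands.

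First, by the definition of ``perfect'' given at the start of Section~\ref{se:complexes}, any perfect complex $C$ is quasi-isomorphic to some bounded complex $C'$ that in each degree is a finite direct sum of copies of $H$ and $F$. (Recall that these are the only indecomposable finitely-generated projective $\undt$-modules, so $C'$ is a bounded complex of finitely-generated projectives, which is the standard notion of a perfect complex.) Hence, for the purposes of the classification up to quasi-isomorphism, we may replace $C$ by $C'$ without loss of generality.

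Second, apply Theorem~\ref{th:complex-splitting} to $C'$. This gives an isomorphism of chain complexes
\[
C' \;\iso\; \Bigl(\bigoplus_i \Sigma^{p_i} A_{k_i}\Bigr) \oplus \Bigl(\bigoplus_j \Sigma^{q_j} B_{r_j}\Bigr) \oplus \Bigl(\bigoplus_\alpha \Sigma^{s_\alpha} H(n_\alpha)\Bigr) \oplus \Bigl(\bigoplus_\beta \Sigma^{t_\beta}\bD(P_\beta)\Bigr),
\]
where each $P_\beta$ is either $H$ or $F$, and all indexing sets are finite since $C'$ is bounded with finitely-generated terms in each degree.

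Third, observe that each disk complex $\bD(P)$ has differential $\id\colon P\to P$, so it is contractible as a chain complex of $\undt$-modules; in particular it is quasi-isomorphic to $0$. Therefore the last summand above is a (finite) direct sum of acyclic complexes and maps to zero in $\cD(\undt)$. Passing through the quasi-isomorphism $C\we C'$ and discarding these contractible summands then expresses $C$ in $\cD(\undt)$ as a finite direct sum of shifts of the fundamental complexes $A_k$, $B_r$, and $H(n)$, as desired.

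The only real content here is Theorem~\ref{th:complex-splitting} itself; once that is in hand, the corollary is essentially formal, and no new obstacle arises beyond carefully noting that ``quasi-isomorphic'' in the definition of perfect complex lets us replace $C$ by an honest bounded complex of finitely-generated projectives before invoking the theorem.
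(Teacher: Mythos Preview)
Your proof is correct and follows exactly the reasoning the paper intends: the paper's own proof consists of the single word ``Immediate,'' and you have correctly spelled out what that means---replace the perfect complex by a bounded complex of finitely-generated projectives, apply Theorem~\ref{th:complex-splitting}, and discard the contractible disk summands.
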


\begin{proof}
Immediate.
\end{proof}

The following corollary is also very useful:

\begin{cor}
\label{co:F-complexes}
Let $C$ be a bounded complex which in each degree is a finite direct
sum of copies of $F$.  Then $C$ is isomorphic to a direct sum of
shifts of the complexes $A_k$ (for various $k \geq 0$) and the contractible complexes $\bD(F)$.
\end{cor}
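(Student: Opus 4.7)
The plan is to derive this directly from Theorem~\ref{th:complex-splitting} together with the Krull--Schmidt-type uniqueness for finitely-generated $\undt$-modules established in Proposition~\ref{pr:Hmod-classify}.

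First I would apply Theorem~\ref{th:complex-splitting} to write $C$ as a direct sum of shifts of complexes drawn from the list $A_k$, $B_r$, $H(n)$, $\bD(F)$, $\bD(H)$. The next step is to inspect the definitions of the fundamental complexes and observe that every one of $B_r$, $H(n)$ (for any $n\in\Z$, including $n=0$), and $\bD(H)$ contains at least one copy of $H$ as a direct summand in some degree. Specifically, $\bD(H)$ is $H$ in both of its degrees, $H(n)$ has $H$ in its ``capping'' degree, and $B_r$ has $H$ at each of its two extreme degrees.

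The key observation is then that direct sums of chain complexes are computed degreewise, so the occurrence of any shift of $B_r$, $H(n)$, or $\bD(H)$ as a summand would force some degree of $C$ to contain $H$ as a direct summand of $\undt$-modules. But by hypothesis every degree of $C$ is a finite direct sum of copies of $F$, and the classification of finitely-generated $\undt$-modules (Proposition~\ref{pr:Hmod-classify}) asserts that the decomposition into the indecomposables $H$, $H^{\op}$, $F$, $\Sd$, $\Sth$ is unique up to permutation of summands. Since $H\not\iso F$ and $H$ is indecomposable, no such module can have $H$ as a summand. This rules out every type of summand except shifts of $A_k$ and $\bD(F)$, giving the claimed decomposition.

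I do not expect any real obstacle here; the content is entirely packaged inside Theorem~\ref{th:complex-splitting} and the uniqueness clause of Proposition~\ref{pr:Hmod-classify}, so the corollary is a formal consequence once one checks the ``each fundamental complex other than $A_k$ and $\bD(F)$ has an $H$ somewhere'' bookkeeping.
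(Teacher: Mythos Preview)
Your proposal is correct and follows essentially the same route as the paper: apply Theorem~\ref{th:complex-splitting}, then rule out $B_r$, $H(n)$, and $\bD(H)$ summands by noting each contains an $H$ in some degree while $H$ cannot be a summand of any $F^m$. The only cosmetic difference is that the paper verifies this last fact directly (in $H$ the map $p_*$ is zero, while in $F^m$ it is surjective) rather than invoking the uniqueness clause of Proposition~\ref{pr:Hmod-classify}.
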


\begin{proof}
Theorem~\ref{th:complex-splitting} implies that $C$ is isomorphic to a direct sum of
the desired types of complexes and also the $B_r$, $H(n)$, and $\bD(H)$
complexes.  None of these last three types can appear since $H$
is not a direct summand of any $F^m$.  This last fact follows because
in $H$ the structure map $p_*$ is zero, whereas in $F^m$ it is surjective.
\end{proof}

\subsection{Results for bounded below complexes}
We can extend the above results to chain complexes that are bounded below
by adding two more fundamental complexes to our list. Let $B_\infty$ denote
the complex that has an $F$ in each positive degree and ends with an
 $H$ in degree zero, and let $A_\infty$ denote the complex that has an
 $F$ in all nonnegative degrees. In both, the maps $F\to F$ are given by
 $1+t$, and in $B_\infty$ the map $F\to H$ is $p^*$.

The complex $A_\infty$ clearly deserves its name, as it is the colimit
of an evident sequence:
\[ A_\infty \iso \colim \bigl [ A_0 \ra A_1 \ra A_2 \ra \cdots \bigr].
\]
The situation is less clear for $B_\infty$.  On the one hand,
$B_\infty$ is the colimit of a sequence
\[ H \ra \Sigma H(1) \ra \Sigma^2 H(2) \ra \Sigma^2 H(3) \ra \cdots.
\]
But at the same time, $B_\infty$ is also the inverse limit of an evident
sequence involving $B$-complexes:
\[ B_\infty \iso \lim \bigl[ \cdots \ra \Sigma^4 B_2\ra
\Sigma^3 B_1\ra \Sigma^2 B_0    \bigr ].
\]
The appropriateness of the name $B_\infty$ really becomes clear in
Proposition~\ref{pr:dbox}(vii) below, as well as in the following
result (whose proof is again deferred until Section~\ref{se:distinguish}):

\begin{prop}
\label{pr:cohom-rings-2}
The bigraded cohomology of $A_\infty$ is
$H^{*,*}(A_\infty)=\M_2[\tau^{-1}]$.  For $B_\infty$,
$H^{*,*}(B_\infty)$ is the $\Sigma^{1,0}$-suspension of the bigraded module quotient
$\M_2[\tau^{-1},\rho^{-1}]/\M_2[\rho^{-1}]$.  The picture of
$H^{*,*}(B_\infty)$ is similar to Figure~\ref{fig:D} except the
diagonal strip extends to
occupy the entire half-plane below the diagonal $p-q=2$.
\end{prop}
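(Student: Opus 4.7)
The plan is to compute both bigraded cohomology rings by applying Milnor's $\lim^1$ short exact sequence to colimit presentations in $\cD(\undt)$.

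For $A_\infty = \colim_k A_k$, each inclusion $A_k \hookrightarrow A_{k+1}$ is a degreewise split monomorphism of bounded complexes of projectives, so the colimit is a homotopy colimit and the Milnor short exact sequence
\[
0 \to {\textstyle\lim^1_k}\, H^{p-1,q}(A_k) \to H^{p,q}(A_\infty) \to \lim_k H^{p,q}(A_k) \to 0
\]
applies. By Proposition~\ref{pr:cohom-rings}(i) the inverse system equals $\{\M_2[\tau^{-1}]/(\rho^{k+1})\}_k$ with the canonical surjections as transition maps, and in each bidegree it stabilizes once $k \geq p$. The system is therefore Mittag--Leffler, $\lim^1 = 0$, and the limit is $\M_2[\tau^{-1}]$.

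For $B_\infty$ I would use the colimit presentation $B_\infty = \colim_n \Sigma^n H(n)$. The preliminary step is to identify the transition maps on cohomology: a direct chain-map computation shows that $H \to \Sigma H(1)$ represents the class $\rho \in H^{1,1}(H) = \M_2$, so after identifying $H^{*,*}(\Sigma^n H(n))$ with a copy of $\M_2$ whose generator sits in bidegree $(n,n)$, each transition $H^{*,*}(\Sigma^{n+1} H(n+1)) \to H^{*,*}(\Sigma^n H(n))$ becomes multiplication by $\rho$. The Milnor sequence then reduces the computation to analyzing the inverse system $\{\M_2^{p-n,\,q-n}\}_n$ with transitions $\rho \cdot$. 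Splitting into cases in $(p,q)$: in the upper-cone region $q \geq p$ every entry vanishes for $n > p$, forcing the limit to vanish; in the intermediate band $p - q = 1$ every entry is identically zero; in the lower-cone region $p - q \geq 2$ the entries vanish for $n < p$ but are $\Z/2$ with isomorphism transitions for $n \geq p$, giving limit $\Z/2$. In all cases the system is Mittag--Leffler, so $\lim^1 = 0$, and hence $H^{p,q}(B_\infty) \cong \Z/2$ precisely for $p - q \geq 2$.

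The last step is to identify the resulting bigraded $\M_2$-module with $\Sigma^{1,0}(\M_2[\tau^{-1},\rho^{-1}]/\M_2[\rho^{-1}])$. Since both modules are one-dimensional over $\Z/2$ in exactly the same bidegrees, it suffices to match the $\tau$- and $\rho$-actions on generators. I expect this to be the most delicate step: at each finite level $n$ one has $\tau \cdot \theta = \rho \cdot \theta = 0$ in $\M_2$, so the actions are nilpotent levelwise, yet the compatibility relation $x_n = \rho\, x_{n+1}$ converts these nilpotent sequences into limit classes on which $\tau$ and $\rho$ act as isomorphisms in the appropriate bidegree ranges. It is precisely this conversion that manufactures the localization $\M_2[\tau^{-1},\rho^{-1}]$ and accounts for the quotient by $\M_2[\rho^{-1}]$, since the $\tau$-power of each limit generator is genuinely negative. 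Concrete checks at the generators near $(p,q) = (0,-2)$ and $(1,-1)$, combined with $\M_2$-naturality, then pin down the full module structure.
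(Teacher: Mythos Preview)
Your approach for $A_\infty$ is essentially the same as the paper's: both use the filtration by $A_k$'s and observe that the tower stabilizes in each fixed bidegree. You phrase this via the Milnor $\lim^1$ sequence, while the paper simply notes that $A_n\hookrightarrow A_\infty$ induces an isomorphism on $H^{p,*}$ for $p\leq n$; these are the same observation.

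For $B_\infty$ your route is genuinely different. The paper uses two cofiber sequences: first $H\to B_\infty\to\Sigma A_\infty$ to compute the groups (leaving one family of extension problems), then $\Sigma B_\infty\to B_\infty$ with cofiber $\Sigma^2 B_0$ to resolve those extensions and pin down the $\tau$- and $\rho$-actions. You instead exploit the colimit presentation $B_\infty=\colim_n \Sigma^n H(n)$ and the Milnor sequence, identifying the transition maps with $\rho$-multiplication on shifted copies of $\M_2$. Your case analysis of the tower $\{\M_2^{p-n,q-n}\}$ is correct and cleanly yields the groups. The tradeoff: the paper's cofiber-sequence method packages the $\M_2$-module structure automatically (the long exact sequences are $\M_2$-linear), whereas your method requires the separate verification you flag as ``most delicate.'' That step can be made fully rigorous: given a limit class $(y_n)$, setting $x_n:=y_{n+1}$ exhibits $\rho$-divisibility directly, and tracing $\tau\cdot(x_n)=(\tau x_n)$ through the explicit lower-cone elements $\theta/(\tau^{p-q-2}\rho^{n-p})$ shows $\tau$ acts as an isomorphism for $p-q\geq 3$ and as zero for $p-q=2$; finally $\theta$ annihilates everything since all $x_n$ lie in the lower cone. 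So your sketch is sound, but you should replace the hedged language (``I expect,'' ``concrete checks'') with these explicit verifications.
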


\begin{thm}
\label{th:bd-below-splitting}
Let $C$ be a bounded below complex that in each degree is a finite
direct sum of copies of $F$ and $H$.  Then $C$ is isomorphic to a
direct sum of shifts of the complexes $A_k$, $B_r$, $H(n)$,
$A_\infty$, $B_\infty$, and the contractible complexes
$\bD(F)$ and $\bD(H)$ (for various values of $k,r \geq 0$ and $n \in \Z$).
\end{thm}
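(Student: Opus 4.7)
The plan is to bootstrap Theorem~\ref{th:complex-splitting} from the bounded to the bounded-below case by an exhaustion from above. After shifting, assume $C_n=0$ for $n<0$, and for each $N\ge 0$ let $C^{\le N}$ denote the brutal truncation agreeing with $C$ in degrees $\le N$ and vanishing in higher degrees. Each $C^{\le N}$ is a bounded complex of finitely generated free $\undt$-modules, so Theorem~\ref{th:complex-splitting} decomposes it as a finite direct sum of shifts of $A_k$, $B_r$, $H(n)$ together with disks $\bD(F)$ and $\bD(H)$. The original $C$ is the filtered colimit of these truncations under the evident inclusions $C^{\le N}\hookrightarrow C^{\le N+1}$, so the task is to assemble a splitting of $C$ from splittings of the $C^{\le N}$.

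The heart of the argument is to choose these splittings \emph{coherently}: the splitting of $C^{\le N+1}$ should restrict to the splitting of $C^{\le N}$ on the subcomplex. In the passage from $N$ to $N+1$, each existing strand is either (i) closed off at the top and left unchanged, or (ii) extended upward by adjoining an $F$ in degree $N+1$ via $1+t$, promoting $A_k$ to $A_{k+1}$ or $B_r$ to $B_{r+1}$ or $H(n)$ to $H(n+1)$, or (iii) capped off by a new $H$ in degree $N+1$ attached by $p\colon F\to H$; in addition, brand new strands may start in degree $N+1$, either as a bare $H$, a bare $F$, or a disk $\bD(F)$ or $\bD(H)$ straddling degrees $N$ and $N+1$. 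The plan is to rerun the algorithm of Section~\ref{se:complex-classify-proof} in a bottom-up fashion that realizes precisely these upgrades, using the splitting of $C^{\le N}$ as the starting data and performing the single extra step forced by the new module in degree $N+1$.

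Passing to the colimit, any strand whose shape stabilizes after finitely many steps contributes a shifted copy of one of the finite fundamental complexes or a disk. A strand that grows indefinitely is forced into one of exactly two shapes: $A_\infty$, if its bottom is an $F$ and it is extended by $1+t$ maps forever, or $B_\infty$, if its bottom is an $H$ followed by a single $p\colon F\to H$ and then an infinite tail of $F$'s connected by $1+t$'s. (These are the only bounded-below linear strand shapes that use only one $F$ or $H$ per degree and extend to $+\infty$, given the restricted list of available connecting maps from Proposition~\ref{pr:box_table}.) Because $C_n$ is finitely generated in each degree, only finitely many strands contribute to any given degree, so the direct sum over all strands really equals $C$.

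The main obstacle I anticipate is the coherence step: verifying that the bottom-up variant of the algorithm from Section~\ref{se:complex-classify-proof} can be iterated without ever needing to merge or revise previously chosen strands. This will require a careful inductive analysis parallel to that in the bounded case, keeping track of how the existing strand tops interact with the newly exposed module $C_{N+1}$ and the differential $C_{N+1}\to C_N$. Once this bookkeeping is in place, the limit argument sketched above delivers the decomposition in the stated form.
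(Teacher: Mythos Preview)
Your overall strategy of truncating and passing to a limit matches the paper's, but the coherence step you flag as ``the main obstacle'' is a genuine gap that your sketch does not close. The problem is that the forgetful map from splittings of $C^{\le N+1}$ to splittings of $C^{\le N}$ is \emph{not surjective} in general, so a greedy bottom-up construction can get stuck. For a minimal example, take $C_0=F_{e_1}\oplus F_{e_2}$, $C_1=F_g$, and $d(g)=(1+t)e_1$. The basis $\{e_1+e_2,\,e_2\}$ gives a perfectly good strand decomposition of $C^{\le 0}$ as $A_0\oplus A_0$, but it does not extend: in this basis $d(g)=(1+t)(e_1+e_2)+(1+t)e_2$ hits both summands, and no change of basis in degree~$1$ alone repairs this. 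Your description (i)--(iii) of the passage from $N$ to $N+1$ is thus too optimistic---indeed, the algorithm in Section~\ref{se:complex-classify-proof} explicitly rewrites bases and rearranges summands in \emph{all} lower degrees at each inductive step. To run your argument you would need to know in advance which splittings of $C^{\le N}$ are infinitely extendable, and that cannot be decided from any finite stage.

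The paper sidesteps this with a compactness argument. Let $S_m$ be the set of all homogeneous bases for $C[0,m]$ exhibiting it as a direct sum of shifts of $A_k$, $B_r$, $H(n)$, $\bD(F)$, $\bD(H)$, with restriction maps $S_m\to S_{m-1}$ given by forgetting the degree-$m$ part. By Theorem~\ref{th:complex-splitting} each $S_m$ is nonempty, and since each $C_m$ is finitely generated each $S_m$ is \emph{finite}. An inverse limit of nonempty finite sets is nonempty (K\"onig's lemma), so a coherent family of splittings exists---a single basis for all of $C$ decomposing it into strands. Strands that eventually terminate contribute the finite fundamental complexes and disks; unbounded strands are forced into the shapes $A_\infty$ or $B_\infty$, exactly as in your last paragraph. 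The finiteness of the $S_m$ is the key point, and it replaces the constructive coherence verification you were unable to carry out.
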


\begin{proof}
Fix $C$ as in the statement of the proposition.  Without loss of
generality we can assume that $C$ vanishes below degree zero.   Let $C[0,m]$ be the truncation of $C$ consisting of
degrees $0$ through $m$.  Recall the notion of basis from
Definition~\ref{de:basis} and let $S_m$ be the set of all
(homogeneous) bases for
$C[0,m]$ that make the truncation split as a direct sum of shifts of
the complexes $A_k$, $B_r$, $H(n)$, $\bD(F)$, and $\bD(H)$. There are
evident maps $S_m\to S_{m-1}$ given by forgetting the basis in degree
$m$.  The proposition is then equivalent to the statement that the
inverse limit $S=\varprojlim S_m$ is nonempty. By Theorem
\ref{th:complex-splitting}, the set $S_m$ is nonempty for each $m$,
and since $C$ is finite-dimensional in each degree, $S_m$ is finite for each
$m$.  A filtered inverse limit of any collection of finite, nonempty sets is
nonempty \cite[E III.58, Theorem 1]{Bo}, and thus $S$ is nonempty. This completes the proof.
\end{proof}

\begin{remark}
\label{re:strand}
When $p$ is an odd prime and one considers $C_p$-Mackey functors,
there are again basic projective $\undr{p}$-modules $F$ and $H$.  One could
ask about a similar splitting theorem for complexes of $\undr{p}$-modules in this context.  The situation here is much more
complicated, however, and nothing as simple as Theorem~\ref{th:complex-splitting} could work.
Theorem~\ref{th:complex-splitting} implies every perfect
complex is quasi-isomorphic to a direct sum of strands, but in
the $C_p$ case  there are perfect complexes that do not split this way.

For example, let $p = 3$ and consider $G=C_3$ with generator $\gamma$.  Now let $H
= \undr{3}$
and $F = F\tht(\Z/3)$ be the analogous Mackey functors in $\Mack_{C_3}$.
The complex
\[
0 \ra F \xrightarrow{\tiny \begin{bmatrix} 1+\gamma+\gamma^2 \\ 1-\gamma \end{bmatrix}} F \oplus F \xrightarrow{\tiny \begin{bmatrix} 1-\gamma & 0 \end{bmatrix}} F \ra 0
\]
cannot be quasi-isomorphic to a direct sum of strands.
In particular, the homology at the middle spot is a Mackey functor that is not found in the homology of any possible strand.
\end{remark}

\subsection{Duality for complexes}

The functor $(\blank)^\op\colon \undt\MMod\ra \undt\MMod$ extends to a
functor $\Ch(\undt)\ra \Ch(\undt)$.  This functor preserves quasi-isomorphisms
and so further extends to a functor $\cD(\undt)\ra \cD(\undt)$.  We calculate this functor
for the fundamental complexes:

\begin{prop}  The duals of the fundamental complexes are
\begin{enumerate}[(a)]
\item $A_k^\op\he \Sigma^{-k}A_k$,
\item $H(n)^\op\he H(-(n+2))$, and
\item $B_r^\op\he \Sigma^{r+4}B_r$.
\end{enumerate}
\end{prop}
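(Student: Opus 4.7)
I would first note that $(\blank)^\op$ extends to chain complexes by $(C^\op)_n=(C_{-n})^\op$ (with dualized differentials), preserves quasi-isomorphisms, and thus descends to an anti-involution on $\cD(\undt)_{\perf}$. Part~(a) falls out immediately: since $F^\op=F$ and $(1+t)^\op=1+t$, the complex $A_k^\op$ is literally the same data as $A_k$, reindexed from degrees $0,\dots,k$ to degrees $-k,\dots,0$, giving an honest chain isomorphism $A_k^\op\cong\Sigma^{-k}A_k$ (no quasi-isomorphism required).

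\textbf{Part (b).} By the involution it suffices to treat $n\geq 0$. The key tool will be the projective resolution from Section~\ref{se:hom-alg},
\[ 0\lra H\llra{p_*}F\llra{1+t}F\llra{\epsilon}H^\op\lra 0, \]
together with the identity $(p^*)^\op\circ\epsilon=1+t\colon F\to F$, which I would verify by a short matrix computation on the $\Theta$- and $\cpdot$-parts. The plan is to replace the top $H^\op$ of $H(n)^\op$ (in degree $n$) by this projective resolution, adjoining an $F$ in degree $n+1$ and an $H$ in degree $n+2$. The identity above forces the new connecting differential from degree $n+2$ to $n+1$ to be $1+t$, so the extended complex is literally $H(-(n+2))$, and the augmentation provides a quasi-isomorphism $H(-(n+2))\to H(n)^\op$.

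\textbf{Part (c).} I would apply the same top-extension trick to $B_r^\op$ (which now has $H^\op$ at both ends) to obtain a quasi-isomorphism $\widetilde B\to B_r^\op$, with $\widetilde B$ nonzero in degrees $0,\dots,r+4$. Then I would construct an explicit chain map $\Sigma^{r+4}B_r\to \widetilde B$ that is the identity in degrees $3,\dots,r+4$ (where both complexes consist of the same $F$'s with $1+t$ differentials together with a top $p_*\colon H\to F$), is the map $p_*\colon H\to F$ in degree $2$, and is zero in degrees $0,1$. The chain-map conditions at the two interfaces reduce to $p_*\circ p^*=1+t$ and $(1+t)\circ p_*=0$, both of which are immediate matrix checks. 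To confirm this is a quasi-isomorphism, I would compute that both $H_*(\Sigma^{r+4}B_r)$ and $H_*(\widetilde B)$ consist of $r+1$ copies of $S_\cpdot$ concentrated in degrees $2$ through $r+2$, and verify degree by degree that the induced maps are isomorphisms (the only nontrivial case being degree $2$, which reduces to the identity on the $\cpdot$-part).

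\textbf{Main obstacle.} The hard part is the degree bookkeeping under $(\blank)^\op$---especially explaining why the correct shift in (c) is $r+4$ and not $r+2$, with the extra $2$ arising from the length of the projective resolution of $H^\op$---and pinning down the two identities $(p^*)^\op\circ\epsilon=1+t$ and $p_*\circ p^*=1+t$. Once these are in hand, the resolutions-plus-chain-map construction above goes through mechanically.
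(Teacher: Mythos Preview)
Your approach is essentially the same as the paper's: part~(a) is a direct computation, and for (b) and (c) you patch in the projective resolution $0\to H\to F\to F\to H^\op\to 0$ at one end and use an explicit chain map (identity on the upper portion, $p$ at the interface, zero below) at the other. You supply more detail than the paper does---verifying the identity $(p^*)^\op\circ\epsilon=1+t$ and checking the quasi-isomorphism on homology in~(c)---which is fine.

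One small gap in your reduction for~(b): the claim that ``by the involution it suffices to treat $n\geq 0$'' does not quite cover all integers. Knowing $H(n)^\op\he H(-(n+2))$ for $n\geq 0$ and applying $(\blank)^\op$ yields the statement for $m=-(n+2)\leq -2$, but the involution $n\mapsto -(n+2)$ has $-1$ as its unique fixed point, so $n=-1$ is not reached. This case is easy to handle directly (both $H(-1)$ and $H(-1)^\op$ are quasi-isomorphic to $S_\Theta$ concentrated in degree~$0$), but it should be mentioned. The paper sidesteps this by treating $H(n)^\op$ for $n>0$ and $H(-n)^\op$ for $n>0$ separately rather than invoking the involution.
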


\begin{proof}
Part (a) is a direct computation.  For (b), when $n>0$ the complex
$H(n)^\op$ is
\[ 0 \ra H^\op \ra F \ra F \ra \cdots \ra F \ra 0
\]
where the $H^\op$ is in degree $n$ and the rightmost $F$ is in degree
$0$.  Patching the resolution $0 \ra H \ra F \ra F \ra H^\op$ on to
the end of this complex, we see that $H(n)^\op\he H(-(n+2))$.
Similarly, for the complex $H(-n)^\op$ we use the quasi-isomorphism
\[ \xymatrix{
0 \ar[r] & F \ar[r] & F \ar[r] & \cdots \ar[r] & F \ar[r] & F \ar[r] &
F \ar[r] & H^\op \ar[r] & 0  \\
0 \ar[r]\ar[u]^\id & F\ar[u]^\id\ar[r] & F\ar[u]^\id \ar[r] & \cdots \ar[r] & F
\ar[r]\ar[u]^\id & H \ar[r]\ar[u]^p & 0\ar[u]
}
\]
to see that $H(-n)^\op\he H(n-2)$.  This proves (b).

Finally, the proof of (c) is a combination of the two types of
arguments used for (b).  The complex $B_r^\op$ has an $H^\op$ at both
ends, in degrees $r+2$ and $0$.  The one in degree $r+2$
 can be removed by patching in a resolution, and the one in degree $0$
is removed via a quasi-isomorphism as we used in
(b).  The end result is a copy of $B_r$ that lies in degrees $r+4$ down
through $2$, which is $\Sigma^{r+4}B_r$.
\end{proof}

Another kind of duality involves the functor $\dcotens(\blank,H)$.
There is a canonical map
\[ \Gamma_{X,Z}\colon \dcotens(X,H)\dbox Z \lra  \dcotens(X,Z)
\]
obtained as the adjoint of the composite
\[ \xymatrixcolsep{3pc}\xymatrix{
\dcotens(X,H)\dbox Z \dbox X \ar[r]^-{\id\tens t_{Z,X}}
&\dcotens(X,H)\dbox X \dbox Z \ar[r]^-{\ev\tens \id_Z} &
H\dbox Z \ar[r]^\iso & Z.
}
\]

The following is a standard argument:

\begin{prop}
\label{pr:H-duality}
If $X$ is a perfect complex then $\Gamma_{X,Z}$ is an isomorphism in
$\cD(\undt)$, for any $Z$.
\end{prop}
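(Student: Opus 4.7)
The proof is by the standard thick-subcategory argument. Fix $Z \in \cD(\undt)$ and let $\mathcal{T}_Z$ denote the class of $X \in \cD(\undt)$ for which $\Gamma_{X,Z}$ is an isomorphism. Both functors $X \mapsto \dcotens(X, H) \dbox Z$ and $X \mapsto \dcotens(X, Z)$ are contravariant and exact and preserve finite direct sums, so naturality of $\Gamma$ combined with the five lemma for triangulated categories shows that $\mathcal{T}_Z$ is closed under shifts, retracts, finite direct sums, and the two-out-of-three property on distinguished triangles. Hence $\mathcal{T}_Z$ is a thick subcategory of $\cD(\undt)$. Since every perfect complex is, by definition, quasi-isomorphic to a bounded complex whose terms are finite direct sums of copies of $H$ and $F$ --- and any such complex is built from shifts of $H$ and $F$ by finitely many extensions --- it suffices to verify $H, F \in \mathcal{T}_Z$.

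The case $X = H$ is immediate: $H$ is the monoidal unit for $\dbox$, so $\dcotens(H, W) \iso W$ naturally in $W$, both sides of $\Gamma_{H,Z}$ reduce to $Z$, and $\Gamma_{H,Z}$ becomes the identity after unwinding the definition.

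The case $X = F$ exploits the self-duality of $F$. Because $F$ is projective, $\dcotens(F, W) = \uHom(F, W)$ with no derived correction required. A direct computation, compatible with Example~\ref{ex:box-with-F} and with the $\uHom$-table of Proposition~\ref{pr:box_table}, produces a natural isomorphism $\uHom(F, W) \iso F \bbox W$ for every Mackey functor $W$; specialising to $W = H$ gives $\uHom(F, H) \iso F$, realising $F$ as its own dual. Under these identifications both $\dcotens(F, H) \dbox Z$ and $\dcotens(F, Z)$ become $F \dbox Z$, and unwinding the definition of $\Gamma$ shows that $\Gamma_{F,Z}$ corresponds to the identity map. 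The main (and only) subtlety is this last point: one must check that the natural isomorphism $\uHom(F, -) \iso F \bbox -$ genuinely implements the evaluation pairing used to define $\Gamma$ rather than being an abstract identification, and this reduces to a short direct computation with the box-product formulas given in Section~\ref{se:background}.
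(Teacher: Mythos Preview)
Your proof is correct and follows precisely the approach sketched in the paper: verify the claim directly for $X=H$ and $X=F$, then use that the class of $X$ for which $\Gamma_{X,Z}$ is an isomorphism is thick. The paper's own proof is just the one-line version of what you have written out in detail.
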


\begin{proof}
One checks the result directly for the two special objects $X=H$ and $X=F$.
Then use that the property persists under direct sums, extensions, and
retracts.
\end{proof}

\begin{remark}
We will not need it, but the standard arguments also show that the
canonical map $X\ra \dcotens(\dcotens(X,H),H)$ is an isomorphism in
$\cD(\undt)$ whenever $X$ is perfect.  
\end{remark}

\subsection{Calculations for fundamental complexes}
Moving towards our goal of understanding everything we can about
$\cD(\undt)$, the next step is to calculate the effects of basic
operations on the fundamental
complexes.  We start by computing the homology modules of the
complexes:

\begin{prop}
\label{pr:strand-homology}
For $k\geq 2$, $n \geq 1$, and $r \geq 0$ the homology of the different strands is given by the following
formulas.  
\begin{enumerate}[(a)]
\item $H_i(A_0)= \begin{cases} F & \text{if \ $i=0$,}\\ 0 & \text{else.} \end{cases}$
\item $H_i(A_1)= \begin{cases} H & \text{if \ $i = 1$,}\\ H^\op & \text{if \ $i=0$,}\\ 0 & \text{else.} \end{cases}$
\item $H_i(A_k)= \begin{cases} H & \text{if \ $i = k$,}\\ S\dt & \text{if \ $0<i<k$,}\\ H^\op & \text{if \ $i=0$,}\\ 0 & \text{else.} \end{cases}$
\item $H_i(H(n))= \begin{cases} H & \text{if \ $i=0$,}\\ S\dt & \text{if \ $-n \leq i < 0$,}\\ 0 & \text{else.}\end{cases}$
\item $H_i(H(0))= \begin{cases} H & \text{if \ $i=0$,}\\ 0 & \text{else.} \end{cases}$
\item $H_i(H(-1))= \begin{cases} S\tht & \text{if \ $i=0$,}\\ 0 & \text{else.} \end{cases}$
\item $H_i(H(-2))= \begin{cases} H^\op & \text{if \ $i=0$,}\\ 0 & \text{else.} \end{cases}$
\item For $n> 2$, $H_i(H(-n))= \begin{cases} S\dt & \text{if \ $1 \leq i \leq n-2$,} \\ H^\op & \text{if \ $i=0$,} \\ 0 & \text{else.} \end{cases}$
\item $H_i(B_r)= \begin{cases} S\dt & \text{if \ $-r-2 \leq i \leq -2$,} \\ 0 & \text{else.}\end{cases}$
\end{enumerate}
\end{prop}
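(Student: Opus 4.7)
The proof is a direct computation, and the strategy is simply to reduce everything to a small library of elementary kernel/image calculations, then apply them position by position along each strand.

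First I would establish the following atomic facts, all immediate from the module descriptions in Proposition~\ref{pr:Hmod-classify} and the matrix formulas recorded in the table just before Section~\ref{se:hom-alg}:
\begin{enumerate}[(1)]
\item $\ker(1+t\colon F\to F)=H$ and $\im(1+t\colon F\to F)=\Sth$; consequently $F/\im(1+t)\iso H^{\op}$ and $\ker(1+t)/\im(1+t)\iso H/\Sth\iso \Sd$.
\item $\ker(p\colon F\to H)=H$ and $\im(p\colon F\to H)=\Sth$; consequently $H/\im(p\colon F\to H)\iso H/\Sth\iso \Sd$.
\item $p\colon H\to F$ is injective with image equal to the sub-Mackey functor $H\subset F$ that coincides with $\ker(1+t\colon F\to F)$.
\end{enumerate}
Each of these is a two-line check on the $\Theta$ and $\cpdot$ parts using the standard bases: on $\Theta$ the map $1+t$ is $\bigl[\begin{smallmatrix}1&1\\1&1\end{smallmatrix}\bigr]$ and on $\cpdot$ it is $0$, so its kernel on $\Theta$ is $\{(a,a)\}$ and its image is again $\{(c,c)\}$; since $p^*$ sends $1\mapsto(1,1)$, the subobject $\{(a,a)\}$ sits as $H\subseteq F$ (with $p^*$ an iso and $p_*=0$), while $\{(c,c)\}$ with trivial $\cpdot$-part is $\Sth$. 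The quotient $F/\Sth$ is $H^{\op}$ after a short check on $p_*$. Item (2) is essentially the same computation, since on $\Theta$ the maps $1+t\colon F\to F$ and $p\colon F\to H$ have the same kernel and (up to the target) the same image. Item (3) is immediate from the explicit formula for $p\colon H\to F$.

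With these facts in hand I would run through the nine cases in order.
\begin{enumerate}[(a)]
\item[(a)--(c)] For $A_k$, internal positions give $\ker(1+t)/\im(1+t)\iso \Sd$; the top position gives $\ker(1+t)=H$; the bottom position gives $F/\im(1+t)\iso H^{\op}$. The low cases $k=0,1$ are handled separately and fit the same pattern (with $A_0=F$ and $A_1$ having $H$ on top, $H^{\op}$ on bottom).
\item[(d)--(h)] For $H(n)$, the key input is (3): the map $p\colon H\to F$ at the top of $H(-n)$ exactly matches the subobject $\ker(1+t)\subset F$, so $H_n(H(-n))=0$ and the homology at position $n-1$ is also $0$ (since $\im(p\colon H\to F)$ equals $\ker(1+t)$). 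Similarly, at the bottom of $H(n)$ the map $p\colon F\to H$ has $\ker(p\colon F\to H)=\ker(1+t)$, forcing homology $0$ just above the $H$. The interior positions again contribute $\Sd$, the bottommost $F$ spot of $H(n)$ contributes $H/\im(p)=\Sd$ by (2), and the extreme $F$-free ends $H_0(H(n))=H$, $H_0(H(-n))=H^{\op}$ come from (1). The degenerate cases $H(0)$, $H(-1)$, $H(-2)$ are then read off directly.
\item[(i)] For $B_r$, both ends are copies of $p\colon H\to F$, so $H_0(B_r)=0$ and $H_{-1}(B_r)=0$ by the same cancellation as at the top of $H(-n)$. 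At the other end, $H_{-(r+2)}(B_r)=H/\im(p\colon F\to H)=\Sd$ by (2), and each interior position contributes $\Sd$ by (1). One verifies $B_0$ and $B_1$ directly to confirm the index range $-(r+2)\le i\le -2$.
\end{enumerate}

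There is no real obstacle beyond bookkeeping; the one point that requires care is consistently identifying the quotients of $H$ and $F$ with the correct indecomposable ($\Sd$ vs.\ $\Sth$ vs.\ $H^{\op}$), which is where the ``op'' duality swap between $p^*$ and $p_*$ can trip one up. All nine formulas then follow by reading off which of the situations (1)--(3) governs each spot of each strand.
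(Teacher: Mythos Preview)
Your proof is correct and is exactly what the paper's proof amounts to: the paper simply says ``These are direct (and straightforward) computations with the complexes,'' and your atomic facts (1)--(3) together with the position-by-position reading are precisely that computation spelled out. The only cosmetic blemish is a slightly garbled phrase in your discussion of $H(n)$ (``the bottommost $F$ spot'' where you mean the bottom $H$), but the computation itself is right.
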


\begin{proof}
These are direct (and straightforward) computations with the complexes.
\end{proof}

\begin{prop}
\label{pr:dbox}
Let $k,l,r \geq 0$ and $m,n \in \Z$.  The (derived) box product of fundamental complexes is given by the following
formulas:
\begin{enumerate}[(i)]
\item $A_k\dbox A_l\he A_k \oplus \Sigma^l A_k$ for $k\leq l$.
\item $A_k\dbox H(n)\he A_k$.
\item $A_k\dbox B_r\he 0$.
\item $H(n)\dbox H(m)\he H(n+m)$.
\item $H(n)\dbox B_r \he \Sigma^{-n} B_r$
\item $B_r\dbox B_l \he \Sigma^{-(l+2)}B_r \oplus B_r$ for $r\leq l$.
\item Parts (i)--(iii) and (v)--(vi) also hold when $k$, $l$, or $r$ is
$\infty$, if we interpret any term with $\Sigma^\infty$ or
$\Sigma^{-\infty}$ as being zero.  For example, $A_k\dbox A_\infty \he
A_k$ and $H(n)\dbox B_\infty \he \Sigma^{-n}B_\infty$ for all $k\leq
\infty$ and $n<\infty$.
\end{enumerate}
\end{prop}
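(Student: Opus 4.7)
Since each fundamental complex is a bounded complex of the finitely-generated projectives $F$ and $H$, the derived box product $\dbox$ agrees on the nose with the underived box product $\bbox$. Combined with the identities $F \bbox F = F^2$, $F \bbox H = F$, and $H \bbox H = H$ from Proposition~\ref{pr:box_table}, this shows the box product of any two fundamental complexes is again a bounded complex built from $F$'s and $H$'s. By Theorem~\ref{th:complex-splitting}, such a complex splits in $\cD(\undt)$ as a direct sum of shifted fundamental complexes and contractible disks. The content of the proposition is thus to identify which shifted fundamentals actually occur.

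My plan is to argue in three layers. First, I would verify a short list of base cases directly at the chain level: $A_0 \bbox A_0 = F^2$ gives (i) when $k=l=0$; $B_0 \bbox B_0$ gives (vi) when $r=l=0$; the complex $A_0 \bbox H(n)$ can be written out and identified with $A_0$ after cancelling contractible disks, giving (ii) when $k=0$; and $F \bbox B_r = A_0 \bbox B_r$ has only $F$'s in each degree, so by Corollary~\ref{co:F-complexes} it splits into $A_j$'s and $\bD(F)$'s, and a bigraded cohomology count against Proposition~\ref{pr:cohom-rings} rules out all $A_j$ summands and establishes the base of (iii). For (iv), I would prove $H(1) \dbox H(-1) \he H$ by a direct chain-level calculation; once $H(1)$ is invertible, it follows inductively that $H(n) \dbox H(m) \he H(n+m)$ for all $n,m \in \Z$.

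Second, I would propagate each base case by smashing the cofiber sequences
\[ A_0 \lra A_k \lra \Sigma A_{k-1}, \qquad B_0 \lra B_r \lra \Sigma^{-1} B_{r-1} \]
with the second fundamental complex and inducting on $k$ or $r$. The resulting long exact sequence of bigraded cohomology, together with Proposition~\ref{pr:cohom-rings}, pins down the summands appearing on the right side and rules out extraneous ones; for (v) I would alternatively exploit invertibility of $H(n)$ to reduce $H(n) \dbox B_r$ to a degree shift of $B_r$ using (iv). For part (vii), use that $A_\infty \he \colim_k A_k$ and that $B_\infty$ has both a colimit and an inverse-limit description (as recorded just before Proposition~\ref{pr:cohom-rings-2}); since $\dbox$ commutes with filtered homotopy colimits, the established finite formulas extend termwise, and the convention that terms like $\Sigma^{\infty} A_k$ are zero simply records that these summands are pushed off to infinity and vanish in the colimit.

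The main obstacle I anticipate is converting cohomological matches into honest chain-level direct-sum decompositions: Theorem~\ref{th:complex-splitting} guarantees \emph{some} splitting exists, but pinning down that the candidate summands suggested by Proposition~\ref{pr:cohom-rings} are exactly the right ones requires care, essentially running a miniature version of the basis-change algorithm from the proof of the splitting theorem within each case. A secondary subtlety is the $B_\infty$ part of (vii), where one must verify that the relevant $\lim^1$ terms from the inverse-limit description actually vanish, or alternatively work exclusively with the colimit presentation to sidestep this.
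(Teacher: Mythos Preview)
Your overall scaffolding (base cases, then propagate along cofiber sequences, then pass to colimits for (vii)) matches the paper's, but there is a genuine circularity problem in how you plan to identify summands. You repeatedly appeal to Proposition~\ref{pr:cohom-rings} to ``pin down the summands'' via a bigraded cohomology count, but in the paper's logical order that proposition is proven \emph{after} this one and explicitly invokes Proposition~\ref{pr:dbox} (it uses the periodicities $F\bbox H(1)\he F$, $A_k\bbox H(1)\he A_k$, and $B_r\bbox H(1)\he\Sigma^{-1}B_r$ to set up the inductive computations). So your proposed identification step is circular as written.

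The paper sidesteps this entirely by working with \emph{ordinary} (single-graded) homology and the box-product table, which are available without any forward references. Concretely: since $F$ is projective one has $H_i(F\bbox C)\iso F\bbox H_i(C)$, and Proposition~\ref{pr:strand-homology} plus the table in Proposition~\ref{pr:box_table} immediately give $F\bbox B_r\he 0$, $F\bbox H(n)\he F$, and $F\bbox A_k\he F\oplus\Sigma^k F$, with Corollary~\ref{co:F-complexes} doing the splitting. From there the paper orders the induction so that the \emph{vanishing} results come first: once $A_k\bbox B_r\he 0$ is known, boxing cofiber sequences like $\Sigma^{-(n+2)}H\to B_n\to\Sigma^{-(n+1)}H(-(n+1))$ with $A_k$ produces an \emph{equivalence} rather than an extension problem, giving (ii) directly; similarly for (v). For (i) and (vi) the paper uses cofiber sequences of the form $\Sigma^{-(l+1)}H\to H(l+1)\to\Sigma^{-l}A_l$ (boxed with $A_k$) and then observes that the connecting map $\Sigma^{-1}A_k\to\Sigma^l A_k$ is zero purely for degree reasons when $k\leq l$, so the cofiber splits. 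Finally, (iv) is handled last: knowing (i)--(iii) and (v)--(vi), one applies $A_0\dbox(-)$ and $B_0\dbox(-)$ to the unknown splitting of $H(n)\dbox H(m)$ from Corollary~\ref{co:perfect-complexes-splitting} to rule out all $A$- and $B$-summands and pin down the unique $H$-summand. Your direct computation of $H(1)\dbox H(-1)\he H$ would also work for (iv), but your inductive steps for (i), (ii), and (vi) genuinely face extension problems that you have not shown how to resolve without the circular appeal to bigraded cohomology.
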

\begin{proof}
There are a variety of ways to do these computations using the
standard machinery of homological algebra.  The following is one route
through this.

\smallskip
\noindent
(1): We claim $F\bbox B_r\he 0$, $F\bbox H(n) \he F$, and $F\bbox A_k\he
F\oplus \Sigma^k F$.

\smallskip
To see this, note since $F$ is projective we have $H_i(F\bbox C)\iso F\bbox H_i(C)$ for
any $C$.  Then Proposition~\ref{pr:strand-homology}, together with Proposition~\ref{pr:box_table},  shows that $H_*(F\bbox B_r)=0$, and $H_*(F\bbox H(n))$ is $F$ concentrated entirely in
degree $0$.  Now Corollary~\ref{co:F-complexes} implies that $F\bbox
H(n)\he A_0=F$.  Similarly, $H_*(F\bbox A_k)$ is $F$ in degrees $0$
and $k$, and zero elsewhere.  By Corollary~\ref{co:F-complexes},
$F\bbox A_k$ must be quasi-isomorphic
to a direct sum of $A$-strands, so the only possibility is $F\bbox
A_k\he F\oplus \Sigma^k F$.

\smallskip
\noindent
(2): $B_r\bbox A_k\he 0$, $B_r\bbox H(n)\he \Sigma^{-n}B_r$, and
$A_k\bbox H(n)\he A_k$.

\smallskip
For this step, apply $B_r\bbox(\blank)$ to the cofiber sequence $A_0\ra A_k \ra \Sigma A_{k-1}$ and use induction to conclude that $B_r\bbox A_k\he 0$ for all $k$.
When $n>0$, for $B_r\bbox H(n)$
use the cofiber sequence $A_{n-1} \ra H \ra \Sigma^n H(n)$ and
apply $B_r \bbox(\blank)$.   Similarly, for $B_r\bbox H(-n)$ use
a cofiber sequence $\Sigma^{n-1}H \ra
A_{n-1} \ra H(-n)$.

When $n\geq 0$, applying $A_k\bbox (\blank)$ to the cofiber sequence $\Sigma^{-(n+2)}H \ra
B_n \ra \Sigma^{-(n+1)}H(-(n+1))$ and using what we have
already proven immediately yields $A_k\bbox H(-(n+1))\he A_k$.
A similar argument applied to the sequence $\Sigma^{-1} H(n+1)\ra B_n
\ra H$ yields $A_k\bbox H(n+1)\he A_k$.

\smallskip
\noindent
(3): If $k\leq l$ then $A_k\bbox A_l \he A_k\oplus \Sigma^l A_k$ and if $r \leq l$ then
$B_r\bbox B_l \he B_r\oplus \Sigma^{-(l+2)} B_r$.

\smallskip
Observe first that there is an evident cofiber sequence
\[ \Sigma^{-(l+1)} H \lra H(l+1) \lra \Sigma^{-l}A_l.
\]
Boxing with $A_k$ and using $A_k\bbox H(l+1)\he A_k$, and for
convenience applying $\Sigma^{l}$, gives a homotopy cofiber
sequence in the derived category
\[ \Sigma^{-1}{A_k} \lra \Sigma^{l} A_k \lra A_k\bbox A_l.
\]
But it is immediately observed for degree reasons that there are no
nonzero maps $\Sigma^{-1}A_k \ra \Sigma^l A_k$ (since $k\leq l$), and
so we conclude $A_k\bbox A_l\he \Sigma^l A_k\oplus A_k$.  The same
proof works for the $B$-strands, starting with the cofiber sequence
\[ \Sigma^{-2} H(l) \lra B_l \lra \Sigma^{-1}H(-1)
\]
and then using that there are no nonzero maps $\Sigma^{-1}B_r \ra
\Sigma^{-(l+2)}B_r$ for $r\leq l$.

\medskip
\noindent
(4): $H(n)\bbox H(m) \he H(n+m)$.

Corollary~\ref{co:perfect-complexes-splitting} says that $H(n)\bbox H(m)$ decomposes (up to
weak equivalence) as a direct sum of shifts of $A$-, $B$-, and
$H$-strands.  Applying $A_0\bbox (\blank)$ and using the previous
parts immediately shows that no $A$-strands can appear, and that exactly
one $H$-strand must appear.  Similarly, applying $B_0\bbox (\blank)$
shows that no $B$-strands can appear and also that the $H$-strand that
appears must
be $H(n+m)$.  These four steps finish (i)--(vi).

We only briefly sketch the proofs for (vii), leaving the details to
the reader.  For
part (i), explicit hand calculation readily shows $F\bbox A_\infty \he
F$.  Pass to the general case of $A_k\bbox A_\infty$ by a double complex argument.  For
$B_r\bbox A_\infty$ in (iii) use that $B_r\bbox A_l\he 0$ and that
$A_\infty$ is the homotopy colimit of the $A_l$.
Then for $H(n)\bbox A_\infty$ in (ii) use the cofiber sequences
$\Sigma^{-2} H\ra B_0\ra
\Sigma^{-1} H(-1)$ and $H\ra B_0 \ra \Sigma^{-1} H(1)$ and induction.

For the remaining cases of (iii), first prove $F\bbox B_\infty \he 0$ by explicit
calculation.  Then get $A_k\bbox B_\infty \he 0$ by induction
using the cofiber sequence $A_{k-1}\ra A_k \ra \Sigma^k F$.  For
$B_r\bbox B_\infty$ in (vi) use $H \ra B_\infty
\ra \Sigma A_\infty$ together with (iii).  Finally, for (v) use the
cofiber sequence $\Sigma^{-n}H \ra H(n) \ra \Sigma^{-(n-1)}A_{n-1}$
and box with $B_\infty$.
\end{proof}

\begin{cor}
For all $n\in \Z$, $H(n)$ is invertible with inverse $H(-n)$.
\end{cor}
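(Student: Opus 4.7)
The plan is essentially a one-line deduction from Proposition~\ref{pr:dbox}(iv). That proposition states $H(n) \dbox H(m) \he H(n+m)$ for all $n,m \in \Z$. Setting $m = -n$ yields $H(n) \dbox H(-n) \he H(0)$, so it only remains to identify $H(0)$ with the monoidal unit of $\cD(\undt)$.

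By definition $H(0)$ is the complex concentrated in degree $0$ with value $H = \undt = F\dt^{\undt}$. Since $F\dt^{\undt}$ is the unit for $\bbox$ on $\undt$-modules (recall $F\dt^R = R$ for any Mackey ring $R$), and since the unit object of a monoidal category of modules, viewed as a complex concentrated in degree zero, is the unit of the induced derived monoidal structure, we have $H(0) \he \unit$ in $\cD(\undt)$. Combining, $H(n) \dbox H(-n) \he \unit$ and, by symmetry of $\dbox$, also $H(-n) \dbox H(n) \he \unit$, which exhibits $H(-n)$ as a two-sided inverse to $H(n)$.

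There is no real obstacle here, since all the work has already been done in Proposition~\ref{pr:dbox}(iv); this corollary is simply the specialization $m = -n$ together with the identification of $H(0)$ as the unit. The proof should be stated in one or two sentences.
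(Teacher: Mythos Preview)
Your proof is correct and matches the paper's approach: the corollary is stated without proof immediately after Proposition~\ref{pr:dbox}, as an obvious consequence of part (iv) specialized to $m=-n$. Your added justification that $H(0)$ is the monoidal unit is accurate and more explicit than what the paper provides.
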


\smallskip

Just as we defined bigraded cohomology groups $H^{p,q}(X)$ for $X\in \cD(\undt)$, we can
also defined bigraded homology groups.  Here the definition is
\[ H_{p,q}(X)=\cD(\undt)(\Sigma^p H(q),X)=\cD(\undt)(\Sigma^pH,X\dbox
H(-q))=H_p(X\dbox H(-q))_\bdot,
\]
where we have used the invertibility of $H(q)$ for the second
equality.  The right-most term suggests an extension of this to
Mackey-functor-valued homology, given simply by $H_{p,q}(X)=H_p(X\dbox
H(-q))$.
For example, a portion of the bigraded homology of $H$ is
shown in Figure \ref{fig:Hhom} (the bigraded homology extends
infinitely in the vertical directions).

\begin{figure}[ht]
\begin{tikzpicture}[scale=0.6]
\draw[step=1cm,gray,very thin] (-4.5,-2.5) grid (4.5,7.5);
\draw[] (-4.5,2) -- (4.5,2) node[below, black] {\small $p$};
\draw[] (0,-2.5) -- (0,7.5) node[left, black] {\small $q$};
\fill (0,2) circle(2pt);

\foreach \y in {-1.5,...,2.5}
  \draw (0.5,\y) node {\small{$H$}};
\draw (0.5,3.5) node {\small{$\Sth$}};
\foreach \y in {4.5,...,6.5}
  \draw (0.5,\y) node {\small{$H^{op}$}};
\foreach \y in {-3.5,...,-0.5}
  \foreach \x in {\y,...,-0.5}
    \draw (\x,\y+2) node {\small{$\Sd$}};
\foreach \y in {5.5,...,6.5}
  \foreach \x in {5.5,...,\y}
    \draw (\x-4,\y) node {\small{$\Sd$}};

\end{tikzpicture}
\caption{The bigraded homology of $H$.}
\label{fig:Hhom}
\end{figure}

We next turn to the cotensor in $\cD(\undt)$:  

\begin{prop}
\label{pr:cotens}
The cotensor between fundamental complexes is given by the following
formulas for $k,r \geq 0$ and $m,n \in \Z$:
\begin{enumerate}[(a)]
\item $\dcotens(H(m),H(n))\he H(n-m)$
\item $\dcotens(A_k,B_r)\he 0 \he \dcotens(B_r,A_k)$
\item $\dcotens(H(n),B_r)\he \Sigma^n B_r$
\item $\dcotens(H(n),A_k)\he A_k$
\item $\dcotens(A_k,H(n))\he \Sigma^{-k}A_k$
\item $\dcotens(B_r,H(n))\he \Sigma^{r-n+2}B_r$
\item $\dcotens(A_k,A_l)\he
\begin{cases}
\Sigma^{-k}A_k\oplus \Sigma^{l-k}A_k & \text{if $k\leq l$},\\
A_l\oplus \Sigma^{-k}A_l & \text{if $k\geq l$.}
\end{cases}$
\item $\dcotens(B_r,B_l)\he
\begin{cases}
\Sigma^{r-l}B_r\oplus \Sigma^{r+2} B_r & \text{if $r\leq l$},\\
\Sigma^{r+2}B_l \oplus B_l & \text{if $r\geq l$.}
\end{cases}$
\end{enumerate}
\end{prop}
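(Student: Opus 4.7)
The strategy is to reduce every case to a box product computation via Proposition~\ref{pr:H-duality}. For any perfect $X$ and any $Z$ we have
\[ \dcotens(X,Z) \he \dcotens(X,H) \dbox Z, \]
so once $\dcotens(X,H)$ is known for $X$ in each of the three families $H(n)$, $A_k$, $B_r$, every entry of the table follows by applying the box product formulas of Proposition~\ref{pr:dbox}.

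The first step is to compute $\dcotens(X,H)$ for the three fundamental shapes. For $X = H(m)$, since $H(m)$ is invertible with inverse $H(-m)$ (Proposition~\ref{pr:dbox}(iv)), the adjunction identifies $\dcotens(H(m),H) \he H(-m)$. For $X = A_k$ or $X = B_r$, I would compute the cotensor at the chain level: the table in Proposition~\ref{pr:box_table} gives $\uHom(F,H) \iso F$ and $\uHom(H,H) \iso H$, and tracks the duals of the generating morphisms $p$, $p^*$, and $1+t$ between $F$ and $H$ (the self-duality of $1+t$, and the swap $p \leftrightarrow p^*$). Applying $\uHom(-,H)$ termwise to $A_k$ reverses its grading without changing the shape, giving $\dcotens(A_k, H) \he \Sigma^{-k} A_k$. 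The same termwise dualization on $B_r$, which is symmetric with $H$ at both ends, yields $\dcotens(B_r,H) \he \Sigma^{r+2} B_r$ (the reversal sends degrees $0, -1, \dots, -(r+2)$ to $r+2, r+1, \dots, 0$).

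With these three facts in hand, each of (a)--(h) is a one-line calculation using Proposition~\ref{pr:dbox}. Explicitly: (a) is $H(-m) \dbox H(n) \he H(n-m)$; (b) uses $A_k \dbox B_r \he 0$; (c) is $H(-n) \dbox B_r \he \Sigma^n B_r$ by (v); (d) is $H(-n) \dbox A_k \he A_k$ by (ii); (e) is just the formula $\dcotens(A_k,H) \he \Sigma^{-k} A_k$ smashed with $H(n)$, i.e.\ $\Sigma^{-k} A_k \dbox H(n) \he \Sigma^{-k} A_k$; (f) is $\Sigma^{r+2} B_r \dbox H(n) \he \Sigma^{r-n+2} B_r$ by (v); and (g), (h) come from $\Sigma^{-k} A_k \dbox A_l$ and $\Sigma^{r+2} B_r \dbox B_l$ respectively, combined with the two-case formulas in Proposition~\ref{pr:dbox}(i) and (vi), where the case split on whether $k \le l$ or $k \ge l$ is inherited from the box product itself.

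The only genuinely nontrivial step is the chain-level dualization of $A_k$ and $B_r$, where care is needed to identify the dual of $p\colon H \to F$ as $p^* \colon F \to H$ (and vice versa) and to get the resulting suspension indices right; once these duals are pinned down, everything else is bookkeeping against Proposition~\ref{pr:dbox}. If desired, the $A_k$ and $B_r$ cases of $\dcotens(X,H)$ can be cross-checked against Proposition~\ref{pr:strand-homology} by verifying that the duals have the predicted homology Mackey functors.
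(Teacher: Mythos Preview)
Your proposal is correct and follows essentially the same approach as the paper: compute $\dcotens(X,H)$ for each of the three fundamental shapes (yielding $H(-m)$, $\Sigma^{-k}A_k$, and $\Sigma^{r+2}B_r$), then invoke Proposition~\ref{pr:H-duality} together with the box product table in Proposition~\ref{pr:dbox}. The paper's proof is simply a terser version of yours, stating the three duals ``by inspection'' and leaving the bookkeeping implicit.
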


\begin{proof}
One readily computes by inspection that $\cotens(A_k,H)\iso \Sigma^{-k}A_k$, while 
$\cotens(H(n),H)\iso H(-n)$, and $\cotens(B_r,H)\iso
\Sigma^{r+2}B_r$.  The desired results are then immediate from
Proposition~\ref{pr:H-duality} and Proposition~\ref{pr:dbox}.
\end{proof}

We include the following result as a curiosity.
We have
called this ``Grothendieck-Serre duality'' because of the evident
analog to the similar statement in algebraic geometry.  Note that the
appearance of $H(-2)$  is really due to the equivalence $H^{\op}\he
H(-2)$.

\begin{thm}[Grothendieck-Serre Duality]
\label{th:Serre-duality}
For perfect complexes $X$ and $Y$ one has
\[ \dcotens(X,\,Y\dbox H(-2)) \he \dcotens(Y,X)^{op}.
\]
\end{thm}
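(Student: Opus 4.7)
The plan is to derive the theorem from two preparatory identities combined with the $H$-duality of Proposition~\ref{pr:H-duality} and the standard tensor-hom adjunction. The first identity asserts that $H(-2)$ is a ``dualizing object'' for perfect complexes: for any perfect complex $Z$, there is an equivalence $Z^{op} \he \dcotens(Z, H(-2))$ in $\cD(\undt)$. The second identity is the double-dual statement mentioned (without proof) in the remark following Proposition~\ref{pr:H-duality}: for any perfect complex $X$, the canonical map $X \ra \dcotens(\dcotens(X, H), H)$ is a quasi-isomorphism. These two identities, together with H-duality, are the only nontrivial ingredients.

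Granting both identities, the theorem follows from the chain below, each step of which is justified by the dualizing identity, Proposition~\ref{pr:H-duality}, the adjunction $\dcotens(A\dbox B, C) \he \dcotens(A, \dcotens(B, C))$, or the double-dual identity:
\begin{align*}
\dcotens(Y, X)^{op} &\he \dcotens(\dcotens(Y, X), H(-2))\\
&\he \dcotens(\dcotens(Y, H)\dbox X, H(-2))\\
&\he \dcotens(\dcotens(Y,H), \dcotens(X, H(-2)))\\
&\he \dcotens(\dcotens(Y, H), H) \dbox \dcotens(X, H(-2))\\
&\he Y \dbox \dcotens(X, H) \dbox H(-2)\\
&\he \dcotens(X, Y \dbox H(-2)).
\end{align*}
At the fourth step one needs that $\dcotens(Y, H)$ is itself perfect, so that Proposition~\ref{pr:H-duality} applies with it as first argument; this is visible from Proposition~\ref{pr:cotens}, which shows $\dcotens(-, H)$ sends fundamental complexes to shifted fundamental complexes.

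Both preparatory identities are established by the same strategy: use Corollary~\ref{co:perfect-complexes-splitting} to decompose the perfect input as a direct sum of shifted fundamental complexes $A_k$, $B_r$, $H(n)$, and then verify the identity case by case using Proposition~\ref{pr:cotens} together with the duality formulas $A_k^{op}\he \Sigma^{-k}A_k$, $H(n)^{op}\he H(-(n+2))$, and $B_r^{op}\he \Sigma^{r+4}B_r$. The main obstacle is pure bookkeeping: the $B_r$ case carries a shift of $r+4$ that interacts with the various suspensions produced by $\dcotens(-,H)$, and for the double-dual identity one must check that these shifts cancel correctly. Once the shifts are tabulated, each case reduces to an immediate equality, so the proof is mechanical rather than conceptual beyond the structural chain above.
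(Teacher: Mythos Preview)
Your proof is correct. The chain of equivalences is valid: each step is justified as you claim, and the perfectness of $\dcotens(Y,X)$ and $\dcotens(Y,H)$ (needed for the dualizing identity and for Proposition~\ref{pr:H-duality} respectively) follows from Proposition~\ref{pr:cotens}. The case-checks for your two preparatory identities go through using the formulas in Proposition~\ref{pr:cotens} and the duality computations; for instance $\dcotens(B_r,H(-2))\he \Sigma^{r+4}B_r\he B_r^{op}$ as required.

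Your approach is genuinely different from what the paper describes. The paper only says the theorem ``can be proven by brute force using Theorem~\ref{th:complex-splitting} simply by checking it for the fundamental complexes,'' meaning one verifies $\dcotens(X,Y\dbox H(-2))\he \dcotens(Y,X)^{op}$ directly for all nine pairs $(X,Y)$ of fundamental complexes. You instead isolate two \emph{one-variable} identities---the dualizing statement $Z^{op}\he \dcotens(Z,H(-2))$ and double-duality---verify those on fundamental complexes (three cases each), and then deduce the two-variable theorem formally from adjunction and Proposition~\ref{pr:H-duality}. This is cleaner bookkeeping and is arguably closer to the ``more satisfying proof'' the paper alludes to but does not give: it explains the result as a consequence of $H(-2)$ being the dualizing object, rather than as a coincidence across nine separate computations.
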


This theorem can be proven by brute force using
Theorem~\ref{th:complex-splitting} simply by checking it for the fundamental
complexes, but this is clearly not the ideal method.
As we have no need of the result elsewhere in the paper, we do not
take the time here to develop a more satisfying proof.

\vspace{0.2in}

We next look at morphisms between fundamental complexes.  Recall that we
write $\cD(\undt)(X,Y)$ for maps in the derived category from $X$ to $Y$.
Note
that since the $H(n)$ are invertible one always has
\[ \cD(\undt)(X,Y)\cong \cD(\undt)(X\dbox H(n),Y\dbox H(n)) \]
for every $n \in \Z$.

We start with the case $X=H$, which has already been done by virtue of
the isomorphism $\cD(\undt)(\Sigma^i H,Y)\iso H_i(Y)\dt$ (the
$\cpdot$ side of the Mackey functor $H_i(Y)$):

\begin{prop}
\label{pr:homotopy-compute}
Let $k, n, r \geq 0$.  The homology of the fundamental strands is nonzero
only in the following cases:
\begin{enumerate}[(a)]
\item $\cD(\undt)(\Sigma^i H, A_k)=
\Z/2$ if $0\leq i\leq k$,
\item $\cD(\undt)(\Sigma^i H, H(n))=\Z/2$ if $-n\leq i\leq 0$,
\item $\cD(\undt)(\Sigma^i H, H(-1))=0$ for all $i$,
\item $\cD(\undt)(\Sigma^i H, H(-n))=\Z/2$ if $0\leq i\leq n-2$ (for $n \geq 2$),
\item $\cD(\undt)(\Sigma^i H, B_r)=\Z/2$ if $-(r+2)\leq i\leq -2$.
\end{enumerate}
\end{prop}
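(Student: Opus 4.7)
My plan is to deduce everything from the universal property of $H$ together with the homology calculation already recorded in Proposition~\ref{pr:strand-homology}. The key observation is that $H = F\dt^{\undt}$ is the representing object for the $\cpdot$-evaluation functor: the adjunction gives $\Hom_{\undt}(H,M) \iso M\dt$ naturally in $M$. Since $H$ is projective, this isomorphism is already derived, and applying it levelwise to a chain complex $Y$ shows that the chain complex $\Hom_{\undt}(H,Y)_\bullet$ is just $Y\dt$. Thus
\[
\cD(\undt)(\Sigma^i H, Y) \iso H_i\bigl(Y\dt\bigr) \iso H_i(Y)\dt,
\]
where the second equality uses that taking the $\cpdot$-component of a Mackey functor is exact. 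This reduces the proposition to extracting the $\cpdot$-component of each homology Mackey functor listed in Proposition~\ref{pr:strand-homology}.

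The second ingredient is a trivial bookkeeping fact: among the indecomposable finitely-generated $\undt$-modules, we have $H\dt = H^{\op}\dt = F\dt = \Sd\dt = \Z/2$, while $\Sth\dt = 0$. This can be read directly off the diagrams defining these modules. So on the $\cpdot$-side, only the $\Sth$-summands get killed; all other summands in the homology of the fundamental strands contribute a $\Z/2$.

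Feeding Proposition~\ref{pr:strand-homology} through these observations gives the desired tables: for $A_k$ one gets $\Z/2$ in degrees $0,\ldots,k$ since each of $H$, $H^{\op}$, and $\Sd$ has $\cpdot$-part $\Z/2$; for $H(n)$ with $n\geq 0$ the nonzero homology consists of an $H$ in degree $0$ and $\Sd$'s in degrees $-n,\ldots,-1$, each contributing $\Z/2$; for $H(-1) = \Sth$ the $\cpdot$-part of $\Sth$ is zero, killing everything; for $H(-2)$ and $H(-n)$ with $n>2$ the contributions come from $H^\op$ and the various $\Sd$'s; and for $B_r$ from the $\Sd$'s in degrees $-(r+2),\ldots,-2$.

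There is no serious obstacle here: once the adjunction $\Hom_{\undt}(H,-) \iso (-)\dt$ is recognized and the easy fact that it respects quasi-isomorphisms (since $H$ is projective) is noted, the proposition is a direct translation of Proposition~\ref{pr:strand-homology}. The only mild care needed is to treat the small-$n$ cases $H(0)$, $H(-1)$, $H(-2)$ separately, since the resolution patterns for $\Sth$ and $H^{\op}$ differ in length from the general $H(-n)$ pattern; but this is already handled case-by-case in Proposition~\ref{pr:strand-homology}.
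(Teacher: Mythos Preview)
Your proof is correct and follows essentially the same approach as the paper: the paper states the isomorphism $\cD(\undt)(\Sigma^i H,Y)\iso H_i(Y)\dt$ in the paragraph preceding the proposition and then simply writes ``Immediate from Proposition~\ref{pr:strand-homology}'' as the proof. You have unpacked this in more detail (identifying $H$ as the representing object for $(\blank)\dt$ and noting projectivity), but the underlying argument is identical.
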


\begin{proof}
Immediate from Proposition~\ref{pr:strand-homology}.
\end{proof}

Using the previous results we can easily compute $\cD(\undt)(\Sigma^i X,
Y)$ for any two fundamental complexes $X$ and $Y$.  As one example, we
compute $\cD(\undt)(\Sigma^i A_2,A_5)$ for all $i$.  We use the
isomorphisms
\begin{align*}
 \cD(\undt)(\Sigma^i A_2, A_5)&\iso \cD(\undt)(\Sigma^i H,\dcotens(A_2,A_5))\\
&\iso \cD(\undt)(\Sigma^i H, \Sigma^{-2}A_2\oplus \Sigma^3 A_2)\\
&\iso \cD(\undt)(\Sigma^{i+2}H,A_2)\oplus \cD(\undt)(\Sigma^{i-3}H,A_2)\\
&\iso
\begin{cases}
\Z/2 & \text{if $-2\leq i\leq 0$ or $3\leq i\leq 5$},\\
0 & \text{otherwise.}
\end{cases}
\end{align*}
The first isomorphism is just an adjunction, the second is by
Proposition~\ref{pr:cotens}, and the fourth isomorphism is by
Proposition~\ref{pr:homotopy-compute}.

The above technique allows one to compute $\cD(\undt)(\Sigma^i X,Y)$ for
any fundamental complexes $X$ and $Y$.  We state one specific result along
these lines, which will be needed later:

\begin{prop}
\label{pr:AD}
One has $\cD(\undt)(\Sigma^i A_k,B_r)=0$ and $\cD(\undt)(\Sigma^i B_r,A_k)=0$ for
$k, r \geq 0$ and all $i \in \Z$.
\end{prop}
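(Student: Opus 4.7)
The plan is to reduce both vanishing statements to the cotensor computations already recorded in Proposition~\ref{pr:cotens}(b). Using that $H$ is the monoidal unit, I would first rewrite $\Sigma^i A_k \simeq \Sigma^i H \dbox A_k$ and apply the derived tensor-cotensor adjunction to get
\[
\cD(\undt)(\Sigma^i A_k, B_r) \;\cong\; \cD(\undt)\bigl(\Sigma^i H,\, \dcotens(A_k, B_r)\bigr),
\]
and analogously
\[
\cD(\undt)(\Sigma^i B_r, A_k) \;\cong\; \cD(\undt)\bigl(\Sigma^i H,\, \dcotens(B_r, A_k)\bigr).
\]
Both right-hand sides vanish because Proposition~\ref{pr:cotens}(b) asserts $\dcotens(A_k, B_r) \simeq 0 \simeq \dcotens(B_r, A_k)$. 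This is the entire argument; the proposition is essentially a corollary packaging these vanishings for use in later calculations.

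The real content is upstream, in Proposition~\ref{pr:dbox}(iii): the identity $A_k \dbox B_r \simeq 0$. That in turn was the step requiring work: induct on $k$ via the cofiber sequence $A_0 \to A_k \to \Sigma A_{k-1}$ with $A_0 = F$, reducing to $F \dbox B_r \simeq 0$, which is immediate since $F$ is projective and each homology module of $B_r$ is of type $\Sd$, which is annihilated by $F$ according to Proposition~\ref{pr:box_table}. The passage from this box-product vanishing to the cotensor vanishing then uses the $H$-duality isomorphism of Proposition~\ref{pr:H-duality}: $\dcotens(A_k, B_r) \simeq \dcotens(A_k, H) \dbox B_r \simeq \Sigma^{-k} A_k \dbox B_r \simeq 0$, and symmetrically $\dcotens(B_r, A_k) \simeq \Sigma^{r+2} B_r \dbox A_k \simeq 0$.

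Since all the necessary inputs have already been stated, the only potential obstacle is a bookkeeping one: verifying that the derived adjunction and the $H$-duality isomorphism apply with both variables perfect, which is exactly the setting established earlier. In particular, $A_k$ and $B_r$ are both perfect for every $k, r \geq 0$, so Proposition~\ref{pr:H-duality} applies on the nose. There is no case analysis in $i$ required, as the cotensor is literally zero in the derived category rather than merely having vanishing cohomology in some range.
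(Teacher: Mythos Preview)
Your proof is correct and matches the paper's approach exactly: both reduce immediately to $\dcotens(A_k,B_r)\he 0\he \dcotens(B_r,A_k)$ from Proposition~\ref{pr:cotens}(b) via the adjunction. Your additional discussion tracing this back through Proposition~\ref{pr:H-duality} and Proposition~\ref{pr:dbox}(iii) simply unpacks what the paper leaves implicit.
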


\begin{proof}
This follows immediately from $\dcotens(A_k,B_r)\he 0\he
\dcotens(B_r,A_k)$, which is Proposition~\ref{pr:cotens}(b).
\end{proof}

\subsection{Distinguishing complexes}
\label{se:distinguish}

The classification in \ref{th:complex-splitting} guarantees any
perfect complex is quasi-isomorphic to a direct sum of fundamental
complexes. It does not, however, guarantee there is a unique such
direct sum.  Our main goal in this subsection is to prove the following
proposition, from which we can conclude every perfect complex is
quasi-isomorphic to a direct sum of fundamental complexes in a unique
way.  This completes the classification of objects in
$\cD(\undt)_{\perf}$.

\begin{prop}
\label{pr:classify-2}
Let $X=\bigoplus_{\alpha \in \cJ} \Sigma^{n_\alpha} A_\alpha \oplus
\bigoplus_{\beta \in \cK} \Sigma^{n_\beta} H(\beta) \oplus
\bigoplus_{\gamma \in \cL} \Sigma^{n_\gamma} B_\gamma$ and similarly
$Y=\bigoplus_{\alpha \in \cJ'} \Sigma^{n_\alpha} A_\alpha \oplus
\bigoplus_{\beta \in \cK'} \Sigma^{n_\beta} H(\beta) \oplus
\bigoplus_{\gamma \in \cL'} \Sigma^{n_\gamma} B_\gamma$ where all the
indexing sets are finite and allow for repetition.
If $X\he Y$ then the sets $\cJ$ and $\cJ'$ are equal up to
permutation, and likewise for $\cK$, $\cK'$ and $\cL$, $\cL'$.
Moreover, the suspension factors for corresponding summands must be equal.
\end{prop}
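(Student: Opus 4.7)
The plan is to invoke the Krull--Schmidt--Azumaya theorem: in any additive category, a direct sum decomposition of an object into indecomposables each having a local endomorphism ring is unique up to permutation and isomorphism of the summands. By Corollary~\ref{co:perfect-complexes-splitting}, every perfect complex decomposes as a direct sum of suspensions of the fundamentals, so it suffices to verify two facts: (i) each of $\Sigma^n A_k$, $\Sigma^n H(m)$, and $\Sigma^n B_r$ has a local endomorphism ring in $\cD(\undt)$, and (ii) fundamentals with distinct labels (different type, parameter, or suspension) are non-isomorphic.

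For (i), I would use the adjunction $\cD(\undt)(X,X) \iso H_0(\dcotens(X,X))_{\cpdot}$ together with the cotensor formulas in Proposition~\ref{pr:cotens} and the homology calculations in Proposition~\ref{pr:strand-homology}. The identities $\dcotens(H(n),H(n)) \he H$ and $\dcotens(B_r,B_r) \he B_r \oplus \Sigma^{r+2}B_r$ immediately yield $\End(H(n)) \iso \Z/2$ and $\End(B_r) \iso \Z/2$, both of which are fields. For $A_k$ with $k \geq 1$, the cotensor $\dcotens(A_k,A_k) \he A_k \oplus \Sigma^{-k}A_k$ gives $\End(A_k) \iso \Z/2 \oplus \Z/2$ as an abelian group, and the ring structure is best worked out at the chain level. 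Endomorphisms of $A_k$ can be described as sequences $(a_0,\ldots,a_k) \in \Z/2[C_2]^{k+1}$ with consecutive differences in $\ker(1+t)=(1+t)\Z/2[C_2]=\{0,1+t\}$; modulo the evident chain homotopies, the non-identity generator is represented by $e=(1+t,0,\ldots,0)$. Composing pointwise and using $(1+t)^2=0$ in $\Z/2[C_2]$, one gets $e^2=0$, so $\End(A_k) \iso \F_2[\epsilon]/(\epsilon^2)$, which is local. (For $k=0$ one has $\End(A_0)=\End(F)=\Z/2[C_2] \iso \F_2[\epsilon]/(\epsilon^2)$, still local.)

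For (ii), I invoke the bigraded cohomology calculations of Proposition~\ref{pr:cohom-rings}, which distinguish the three families by their qualitative shapes: $\Sigma^n H(m)$ has the double-cone cohomology of $\M_2$ extending infinitely in both $q$-directions; $\Sigma^n A_k$ has cohomology a vertical strip of width $k+1$, bounded in $p$ and infinite in $q$; and $\Sigma^n B_r$ has finite-dimensional cohomology concentrated on a diagonal strip. These three profiles are qualitatively incompatible, and within each family the parameters $(n,k)$, $(n,m)$, or $(n,r)$ are recovered from the precise bidegrees where the corresponding pattern sits.

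The main obstacle is the ring-structure computation for $\End(A_k)$: one must rule out that the extra generator is an idempotent, which would contradict the indecomposability of $A_k$ via the very decomposition theorem we are trying to establish. The clean relation $(1+t)^2=0$ in $\Z/2[C_2]$ is precisely what forces $e^2=0$ and yields locality; without it, the argument would collapse.
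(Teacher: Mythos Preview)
Your proof is correct and takes a genuinely different route from the paper's.  The paper argues entirely on the level of bigraded $\M_2$-modules: it first proves (Proposition~\ref{pr:perfect-modules}) that a perfect $\M_2$-module has a unique decomposition into shifts of $\M_2$, $H^{*,*}(A_k)$, and $H^{*,*}(B_r)$, by using the constructions $\Ann_M(\tau,\rho)$, $\Ann_{M[\rho^{-1}]}(\tau^\infty)$, and $\Ann_{M[\tau^{-1}]}(\rho^\infty)$ together with the fact that $\M_2[\rho^{-1}]$ and $\M_2[\tau^{-1}]$ are graded PIDs.  Proposition~\ref{pr:classify-2} then follows by applying $H^{*,*}(\blank)$ and reading off the summands.

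Your Krull--Schmidt approach stays in $\cD(\undt)$ and trades the module-theoretic localization tricks for the endomorphism-ring computation $\End(A_k)\iso\F_2[\epsilon]/(\epsilon^2)$.  That computation is the crux, and your chain-level argument via $(1+t)^2=0$ in $\Z/2[C_2]$ is clean and correct.  One small inaccuracy: $H^{*,*}(B_r)=\M_2[\rho^{-1}]/(\tau^{r+1})$ is \emph{not} finite-dimensional over $\Z/2$---the diagonal strip extends infinitely in the $\rho$-direction---but this does not affect your argument, since the three families are still distinguished by their $\tau$- and $\rho$-torsion behavior (none, $\rho$-torsion only, $\tau$-torsion only, respectively), and within each family the suspension and parameter are read off from the position and width of the strip.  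The paper's approach has the virtue of giving explicit recipes to extract the summands from the $\M_2$-module structure; yours has the virtue of being conceptually uniform and of exhibiting the indecomposability of the fundamentals as a byproduct of the local endomorphism rings.
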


Note that we cannot prove this by simply looking at homology modules $H_*(\blank)$.
For example, by Proposition~\ref{pr:strand-homology}
$H(1)$ and $H\oplus \Sigma B_0$ have isomorphic homology modules ($H$
in degree $0$ and $S\dt$ in degree $-1$), but we will see in this
section that they are not quasi-isomorphic.  In order to distinguish
homotopy types in $\cD(\undt)$ we therefore need to use a finer
invariant.  Recall the bigraded cohomology groups of a
complex $X$ given by
\[ H^{p,q}(X)=\cD(\undt)(X,\Sigma^p H(q))
\]
and $H^{*,*}(X)=\bigoplus_{p,q} H^{p,q}(X)$.
Recall also that $\M_2=H^{*,*}(H)$. Note that the derived box
product gives pairings
\[ H^{p,q}(X)\tens H^{r,s}(Y) \lra H^{p+r,q+s}(X\dbox Y)
\]
where we are using the unique equivalence $H(q)\bbox H(s)\he H(q+s)$
in the derived category (such an equivalence exists by
Proposition~\ref{pr:dbox}(iv), and is unique because
$\cD(\undt)(H(n),H(n))\iso \Z/2$ for all $n$).  In particular, $H^{*,*}(X)$
is an $\M_2$-bimodule, and one can readily check that the left and
right module structures coincide.

Note that $\M_2^{p,q}=\cD(\undt)(H,\Sigma^p H(q))\iso H_{-p}(H(q))_\bdot$
and these groups are calculated by
Proposition~\ref{pr:homotopy-compute}.  This gives the picture of
dots in Figure~\ref{fig:M2}.  Let $\tau\in \M_2^{0,1}$, $\rho\in \M_2^{1,1}$, and $\theta\in
\M_2^{0,-2}$ be the unique nonzero classes.  It is routine to check
that these can be represented by the maps $H\ra H(1)$,
$H\ra \Sigma H(1)$, and $H\ra H(-2)$ shown in Figure~\ref{fig:rhothetatau}.
By convention, unlabeled maps between $H$'s and $F$'s
always denote the unique nonzero map, except in the case of maps $F\ra
F$ where an unlabeled map always denotes $1+t$.
To justify that the maps in Figure~\ref{fig:rhothetatau} do
 represent the indicated classes,
one only has to prove that the given  maps are not chain homotopic to
zero; this is routine.

\begin{figure}
\[\xymatrixrowsep{1pc}\xymatrix{
H\ar[d] & {\text{\fbox{$\mathbf{\tau}$}}} &&
{\text{\fbox{$\mathbf{\rho}$}}} & H\ar[d]
&&  {\text{\fbox{$\mathbf{\theta}$}}}&& H\ar[d] \\
F\ar[r] & H && F\ar[r] & H && H\ar[r] & F \ar[r] & F\\
0 & -1 && 1 & 0 &&2 & 1 & 0
}
\]
\caption{The maps $\tau$, $\rho$, and $\theta$ (complexes are drawn
  horizontally here).}
\label{fig:rhothetatau}
\end{figure}

Before giving the proof of Proposition~\ref{pr:cohom-rings} we need a
simple lemma:

\begin{lemma}
\label{le:cof-theta}
The cofiber of the map $\theta\colon H \ra H(-2)$ is weakly equivalent to
$H(-1)\oplus \Sigma H(-1)$, and under this weak equivalence the
cofiber sequence is
\[ \xymatrix{
H \ar[r]^-\theta & H(-2) \ar[r]^-{[\tau\ \  \rho]} & H(-1)\oplus \Sigma H(-1).
}
\]
\end{lemma}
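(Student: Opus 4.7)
The plan is to use the cofiber triangle $H \xrightarrow{\theta} H(-2) \xrightarrow{\mu} C(\theta) \to \Sigma H$ directly: first identify $C(\theta)$ abstractly, then recognize the cofiber map $\mu$ as $[\tau,\rho]$. Since $\tau\theta$ and $\rho\theta$ lie in $\cD(\undt)(H, H(-1))$ and $\cD(\undt)(\Sigma^{-1}H, H(-1))$, both of which vanish by Proposition~\ref{pr:homotopy-compute}(c), the map $[\tau,\rho]$ factors through $C(\theta)$ in $\cD(\undt)$, so it remains to show this factoring is an equivalence.

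To identify $C(\theta)$, I would compute its homology from the long exact sequence of the cofiber triangle. Using Proposition~\ref{pr:strand-homology}, the only nontrivial homology of $H$ and $H(-2)$ is $H_0(H) = H$ and $H_0(H(-2)) = H^\op$, so the long exact sequence reduces to
\[ 0\to H_1(C(\theta))\to H\xrightarrow{\theta_*} H^\op\to H_0(C(\theta))\to 0. \]
A short calculation with structure maps shows that the unique nonzero map of Mackey functors $H\to H^\op$ is zero on $\Theta$ and the identity on $\cpdot$, with kernel and cokernel each equal to $\Sth$; since $\theta$ itself is nonzero, $\theta_*$ must be this map. Hence $H_0(C(\theta)) = H_1(C(\theta)) = \Sth$ and $H_i(C(\theta)) = 0$ otherwise. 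By Corollary~\ref{co:perfect-complexes-splitting}, $C(\theta)$ is quasi-isomorphic to a direct sum of shifts of fundamental complexes, and inspecting Proposition~\ref{pr:strand-homology} shows that $\Sth$ appears in the homology of a fundamental complex only for $H(-1)$ (where $H_0 = \Sth$). Matching degree-by-degree forces $C(\theta) \simeq H(-1)\oplus \Sigma H(-1)$.

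Finally, I would identify the cofiber map $\mu$, viewed under this equivalence as a map $(\mu'_1,\mu'_2) \colon H(-2)\to H(-1)\oplus \Sigma H(-1)$, with $[\tau,\rho]$. Proposition~\ref{pr:cotens}(a) together with Proposition~\ref{pr:homotopy-compute} yields $\cD(\undt)(H(-2), H(-1)) \iso \Z/2$ generated by $\tau$ and $\cD(\undt)(H(-2), \Sigma H(-1)) \iso \Z/2$ generated by $\rho$, so each $\mu'_i$ is either $0$ or a generator. If $\mu'_1 = 0$, then $\pi_1\circ\mu' = 0$ for the projection $\pi_1 \colon H(-1)\oplus \Sigma H(-1)\to H(-1)$, so by the triangulated axioms $\pi_1$ factors through the boundary $\partial \colon H(-1)\oplus \Sigma H(-1)\to \Sigma H$, giving an element of $\cD(\undt)(\Sigma H, H(-1)) = 0$; thus $\pi_1 = 0$, contradicting that $\pi_1$ is a nonzero projection. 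The symmetric argument handles $\mu'_2$, forcing $\mu' = [\tau,\rho]$. The most delicate step is this nonvanishing argument, which leverages the triangulated structure to reduce to Proposition~\ref{pr:homotopy-compute}(c); everything else is a direct homological computation.
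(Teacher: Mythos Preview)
Your proof is correct but takes a genuinely different route from the paper's. The paper works entirely at the chain level: it writes down the cofiber of the explicit map $\theta$ from Figure~\ref{fig:rhothetatau} as a concrete three-term complex, performs a single change of basis to split it as $H(-1)\oplus\Sigma H(-1)$, and then verifies by hand that the two projections are not null-homotopic (hence must be $\tau$ and $\rho$, those being the unique nonzero classes). Your argument instead stays in $\cD(\undt)$: you invoke Corollary~\ref{co:perfect-complexes-splitting} to know $C(\theta)$ decomposes into strands, pin down which strands by matching homology via Proposition~\ref{pr:strand-homology}, and then use the triangulated axioms together with the vanishing $\cD(\undt)(\Sigma H,H(-1))=\cD(\undt)(H,H(-1))=0$ to force both components of the cofiber map to be nonzero.

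Each approach has its merits. The paper's explicit computation is entirely self-contained and does not lean on the splitting theorem; it would work even if Theorem~\ref{th:complex-splitting} were not yet available. Your approach illustrates how much the classification machinery buys you once it is in place, and the factoring argument via the boundary map is a clean way to avoid any chain-level manipulation. One small point worth making explicit in your write-up: the equivalence $C(\theta)\simeq H(-1)\oplus\Sigma H(-1)$ is unique in $\cD(\undt)$ because the only self-equivalence of $H(-1)\oplus\Sigma H(-1)$ is the identity (the off-diagonal hom groups vanish by the same computation you use), so there is no ambiguity in speaking of ``the'' map $\mu'$. Also, your opening paragraph about $[\tau,\rho]$ factoring through $C(\theta)$ is not actually used later; the argument you give in the third paragraph identifies $\mu$ directly rather than checking that a given factoring is an equivalence, so you could streamline by dropping that setup.
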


\begin{proof}
The cofiber of $\theta$ is the complex on the left of the diagram
below (where the complexes are now drawn vertically):
\[\xymatrixrowsep{1pc}\xymatrixcolsep{1.2pc}\xymatrix{
H_a \ar[d]_p  &  &&&&& H_a\ar[d]_p \\
F_b \ar[d] & H_c\ar[dl]^p  &&&&& F_{b+p^*(c)} & H_c\ar[d]^p \\
F_e & &&&&&& F_e
}
\]
Here we use the subscripts to denote basis elements.  So
$d(a)=p_*(b)$, $d(b)=(1+t)e$, and $d(c)=p_*(e)$.  The change of basis
$\{b,c\}\mapsto \{b+p^*(c),c\}$ gives the complex on the right, which
is $H(-1)\oplus \Sigma H(-1)$.

Composing the inclusion of $H(-2)$ into the cofiber with the two
projections for this direct sum, we get maps $H(-2)\ra H(-1)$ and
$H(-2)\ra \Sigma H(-1)$.  To see that these are $\tau$ and $\rho$ we
only need to check that they are nonzero in $\cD(\undt)$, since there
are unique nonzero maps in each case.   The two compositions are
\[
\xymatrixrowsep{1.2pc}\xymatrixcolsep{1.4pc}\xymatrix{
H \ar[d]_p  & &&&&&  H \ar[r]^{1}\ar[d]_p & H \ar[d]_p
\\
F\ar[d]_{1+t} \ar[r]^{p} & H \ar[d]^p && \text{and} &&&   F \ar[r]^{1}\ar[d]_{1+t} & F  \\
F \ar[r]^1 & F   & &&&& F
}
\]
and in each case it is readily checked that null homotopies do not exist.
\end{proof}

\begin{proof}[Proof of Proposition~\ref{pr:cohom-rings} and Proposition~\ref{pr:cohom-rings-2}]
This is tedious but routine, and we only give a sketch.
First note that the groups $H^{*,0}(F)$
are trivially computed, and then one gets a $(0,1)$-periodicity in
$H^{*,*}(F)$ using
that $F\bbox H(1)\he F$ (see Proposition~\ref{pr:dbox}).  Similarly,
$H^{*,0}(B_0)$ is readily computed, and then one gets a
$(1,1)$-periodicity using that $B_0\bbox H(1)\he \Sigma^{-1}B_0$.

Next use the cofiber sequence $H\llra{\rho} \Sigma^1H(1) \lra \Sigma^1
F$ to deduce all the $\rho$-multiplications in $H^{*,*}(H)$.
Similarly, the cofiber sequence $H\llra{\tau} H(1) \lra \Sigma B_0$
lets one deduce all the $\tau$-multiplications in $H^{*,*}(H)$.

The cofiber sequences $A_{k-1}\inc A_k\ra\Sigma^k F$ and $F\inc A_k
\ra \Sigma A_{k-1}$ allow one to inductively compute the
$\Z/2[\tau,\rho]$-module structure on $H^{*,*}(A_k)$.  Similarly, the cofiber
sequences $B_{r-1}\ra B_r\ra \Sigma^{-r}B_0$ and $B_0\ra B_r \ra
\Sigma^{-1}B_{r-1}$ lead to the inductive computation of the
$\Z/2[\tau,\rho]$-module structure on $H^{*,*}(B_r)$.
Note also that the computations can be simplified by using the equivalences
$A_k\bbox H(1)\he A_k$ and $B_r\bbox H(1)\he \Sigma^{-1}B_r$ from Proposition~\ref{pr:dbox}, which
yield periodicities in the module structures.

Finally, by using Lemma~\ref{le:cof-theta} one readily computes all of
the $\theta$-multiplications in $\M_2=H^{*,*}(H)$ (they are all zero, except
for $1\cdot \theta=\theta$).  The rest of the ring structure can be
deduced from simple algebraic arguments.  (For example:
$\frac{\theta}{\tau} \cdot \frac{\theta}{\rho}$ is a class that when
multiplied by $\tau\rho$ gives $\theta^2$---which is zero---and by our analysis of
all the $\tau$- and $\rho$-multiplications we know this forces
the class to be zero).  Similarly, such algebraic arguments also
readily yield the full $\M_2$-module structures on $H^{*,*}(A_k)$ and
$H^{*,*}(B_r)$.

Only a bit more work is required to compute $H^{*,*}(A_\infty)$ and
$H^{*,*}(B_\infty)$.  For the first, the filtration by $A_k$'s shows
that $A_n\inc A_\infty$ induces isomorphisms on $H^{p,*}$ for $p\leq
n$ and the desired result readily follows from this.  For the second,
one first uses the cofiber sequence $H\ra B_\infty \ra \Sigma A_\infty$ to
calculate all of the cohomology groups, but there is one set of
unresolved extension problems.  
 Then one considers the map $\Sigma B_\infty\to B_\infty$ that is $p:H\to F$ in degree one and the identity in higher degrees. The cofiber is readily seen to be quasi-isomorphic to
$\Sigma^{2}B_0$, and this cofiber sequence then resolves those extensions.   
\end{proof}

Say that a bigraded $\M_2$-module is \dfn{perfect} if it is isomorphic
to $H^{*,*}(X)$ for some perfect complex $X$.  By
Theorem~\ref{th:complex-splitting} this is equivalent to saying that the module
is a finite direct sum of bigraded shifts $\Sigma^{p,q}\M_2$ as well
as shifts of $H^{*,*}(A_k)$ and $H^{*,*}(B_r)$ for various values of
$k$ and $r$.  It is not {\it a priori\/} true that the constituent
pieces of such a direct sum are uniquely determined, but they are:

\begin{prop}
\label{pr:perfect-modules}
Let $M$ be a perfect $\M_2$-module.  Then there exist unique integers
(up to permutation)
$p_i$, $q_i$, $s_j$, and $t_r$ together with $n_j\geq 0$ and $k_r\geq 0$ such that
\[ M\iso \bigoplus_i \Sigma^{p_i,q_i}\M_2 \oplus \bigoplus_j
\Sigma^{s_j}H^{*,*}(A_{n_j}) \oplus \bigoplus_r
\Sigma^{t_r}H^{*,*}(B_{k_r}).
\]
\end{prop}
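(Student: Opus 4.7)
Existence is immediate: by Corollary~\ref{co:perfect-complexes-splitting} any perfect complex $X \in \cD(\undt)$ splits as a direct sum of shifts of the fundamental complexes $A_k$, $B_r$, and $H(n)$, and applying $H^{*,*}(-)$ (using $H^{*,*}(H(n))\iso \Sigma^{0,n}\M_2$ from Proposition~\ref{pr:cohom-rings}) produces a decomposition of any perfect $\M_2$-module $M$ of the claimed form. The substance of the statement is uniqueness, and my plan is to derive it from a bigraded Krull--Schmidt argument.

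I would first show that each of $\M_2$, $H^{*,*}(A_k)$, and $H^{*,*}(B_r)$ (and consequently each of their shifts) is indecomposable with bigraded-local endomorphism ring. For any such $X$, an $\M_2$-linear endomorphism $\phi$ is determined by $\phi(1)$: for $X=\M_2$ this is tautological, while for $X=H^{*,*}(A_k)$ (respectively $H^{*,*}(B_r)$) the invertibility of $\tau$ (respectively $\rho$) on $X$ forces $\phi$ to be automatically $\M_2[\tau^{-1}]$-linear (respectively $\M_2[\rho^{-1}]$-linear), and hence $\phi(y)=y\cdot\phi(1)$ for all $y\in X$ using the ambient ring structure on $X$. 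Composition then identifies $\End_{\M_2}(X)$ with $X$ itself as a commutative bigraded ring. Since $X^{0,0}=\Z/2$ in all three cases, $\End_{\M_2}^{0,0}(X)=\Z/2$, and the only bidegree-$(0,0)$ idempotents are $0$ and $\id$, proving indecomposability. Bigraded-locality amounts to showing the homogeneous non-units of $X$ form a two-sided ideal; the only subtle case is $X=\M_2$, where the $\theta$-cone lives in negative degree, but the relations $\theta\cdot\tau=\theta\cdot\rho=0$ (from the proof of Proposition~\ref{pr:cohom-rings}) ensure that no cross-bidegree product of non-units equals $1$.

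I would next show that distinct parameters yield non-isomorphic modules, so the isomorphism class of each summand carries its combinatorial data. The shift $(p_i,q_i)$ of a $\Sigma^{p_i,q_i}\M_2$ summand is recovered as the unique bidegree of its generator, characterized as the unique element (up to scalar) that is neither $\tau$- nor $\rho$-divisible and whose $\tau$- and $\rho$-orbits are unbounded. The pair $(s_j,n_j)$ for $\Sigma^{s_j}H^{*,*}(A_{n_j})$ is recovered as the leftmost and rightmost columns of its support strip, and $(t_r,k_r)$ for $\Sigma^{t_r}H^{*,*}(B_{k_r})$ is recovered from the bounding diagonals of its diagonal strip. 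The three families are pairwise distinguishable by qualitative invariants: $A$-type summands have $\tau$ acting invertibly, $B$-type summands have $\rho$ acting invertibly, and only $\M_2$-shifts have nontrivial $\theta$-cone support.

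With indecomposability, bigraded-local endomorphism rings, and mutual distinguishability in hand, the bigraded Krull--Schmidt theorem delivers the uniqueness of decomposition. The main obstacle I anticipate is invoking the bigraded version of Krull--Schmidt cleanly, since the ungraded statement fails in our setting: for instance $1+\tau\in\M_2$ is a homogeneous non-unit, yet $(1+\tau)+\tau=1$ is a unit, so non-units are not closed under addition when the grading is forgotten. The graded theorem rescues the situation by restricting attention to homogeneous decompositions and homogeneous endomorphisms, under which the non-units of each $\End_{\M_2}(X)$ do form a two-sided ideal, allowing idempotents in $\End_{\M_2}(M)$ to lift degree-preservingly to each indecomposable summand.
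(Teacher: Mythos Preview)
Your approach is correct and genuinely different from the paper's. The paper never invokes Krull--Schmidt; instead it builds explicit module-theoretic invariants that read off the parameters directly: the $\M_2$-summands are detected by the bigraded vector space $\Ann_M(\tau,\rho)$ (whose support pins down the $(p_i,q_i)$), and the $A$- and $B$-summands are isolated by localizing at $\tau$ and $\rho$ respectively and then passing to the torsion submodule, which lands in a module category over a graded PID ($\Z/2[\tau^{\pm1}][\rho]$ or $\Z/2[\rho^{\pm1}][\tau]$) where the structure theorem for finitely-generated modules finishes the job. Your route is more categorical: once you know each fundamental summand has degree-$(0,0)$ endomorphism ring equal to $\Z/2$ (hence local in the category of bigraded modules with degree-preserving maps), Krull--Schmidt--Azumaya applies directly and the remaining work is just checking that distinct parameters give non-isomorphic summands. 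The paper's method is more constructive and dovetails with the localization techniques used elsewhere (e.g.\ in the Balmer spectrum computation), while yours is cleaner conceptually and would generalize more readily. One small slip: you call $1+\tau$ a ``homogeneous non-unit,'' but $1+\tau$ is not homogeneous; your point (that ungraded $\M_2$ is not local because $1=(1+\tau)+\tau$ writes a unit as a sum of non-units) is still valid, and in any case the worry dissolves once you work in the category with degree-preserving morphisms, where $\End(X)=\Z/2$ on the nose.
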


\begin{proof}
We know existence, so the only thing to be proven is uniqueness.
 Observe that $\Ann_M(\tau,\rho)$ is a
finite-dimensional bigraded vector space over $\Z/2$ whose homogeneous
basis is in bidegrees $(p_i-2,q_i-2)$, so this gives uniqueness of the
$p$'s and $q$'s.

The operation $M\mapsto M[\rho^{-1}]$ kills the $H^{*,*}(A)$-summands and does
nothing to the $H^{*,*}(B)$-summands.  The construction
\[ \Ann\nolimits_{M[\rho^{-1}]} (\tau^\infty)=\{x\in
M[\rho^{-1}]\,\bigl |\, \tau^nx=0 \ \text{for some $n>0$}\}
\]
exactly isolates the $H^{*,*}(B)$-summands of $M$.  This construction
is a module over $\M_2[\rho^{-1}]\iso \Z/2[\rho,\rho^{-1}][\tau]$
which is a graded PID, so the uniqueness of the $t_r$ and $k_r$
follow from the usual classification of modules over a PID.

Finally, the operation $M\mapsto M[\tau^{-1}]$ kills the $H^{*,*}(B)$-summands,
and the construction $\Ann_{M[\tau^{-1}]}(\rho^\infty)$ exactly
isolates the $H^{*,*}(A)$-summands.  The analogous argument to the preceding
paragraph shows that the $s_j$ and $n_j$ are uniquely determined,
since $\M_2[\tau^{-1}]\iso \Z/2[\tau,\tau^{-1}][\rho]$ is again a
graded PID.
\end{proof}

Proposition~\ref{pr:classify-2} is really just a corollary of the
above:

\begin{proof}[Proof of Proposition~\ref{pr:classify-2}]
Let $X$ and $Y$ be as in the statement of the proposition and assume
$X\he Y$.  Let
$M=H^{*,*}(X)$ and $N=H^{*,*}(Y)$, so that $M\iso N$ as bigraded
$\M_2$-modules.  By Proposition~\ref{pr:perfect-modules} it follows
that $M$ and $N$ have the same constituent summands, and these exactly
correspond to the constituent summands of $X$ and $Y$.
\end{proof}

We also call attention to the following useful consequence:

\begin{cor}
\label{co:homology-determines}
Let $X$ and $Y$ be perfect complexes of $\undt$-modules.  Then $X\he
Y$ if and only if $H^{*,*}(X)\iso H^{*,*}(Y)$ as bigraded $\M_2$-modules.
\end{cor}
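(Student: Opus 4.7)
The plan is to deduce this as a direct corollary of the preceding classification machinery. The forward direction is immediate: $H^{*,*}(-)$ is a functor on $\cD(\undt)$, so any equivalence $X\he Y$ yields an isomorphism $H^{*,*}(X)\iso H^{*,*}(Y)$ of bigraded $\M_2$-modules.

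For the reverse direction, suppose $H^{*,*}(X)\iso H^{*,*}(Y)$. By Corollary~\ref{co:perfect-complexes-splitting}, I would write
\[
X\he \bigoplus_i \Sigma^{p_i} H(q_i)\,\oplus\,\bigoplus_j \Sigma^{s_j}A_{n_j}\,\oplus\,\bigoplus_r \Sigma^{t_r}B_{k_r},
\]
and similarly for $Y$. Applying $H^{*,*}(-)$ gives a decomposition of $H^{*,*}(X)$ as a direct sum of $\Sigma^{p_i,q_i}\M_2$, shifts of $H^{*,*}(A_{n_j})$, and shifts of $H^{*,*}(B_{k_r})$, using that $H^{*,*}(H(q))=\Sigma^{0,q}\M_2$ by Proposition~\ref{pr:cohom-rings}(ii). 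The same holds for $Y$. These are precisely the summand types appearing in Proposition~\ref{pr:perfect-modules}.

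Now the uniqueness statement in Proposition~\ref{pr:perfect-modules} says that the multisets of summands $\{(p_i,q_i)\}$, $\{(s_j,n_j)\}$, and $\{(t_r,k_r)\}$ are determined by the $\M_2$-module isomorphism class. Since $H^{*,*}(X)\iso H^{*,*}(Y)$, these multisets agree for $X$ and $Y$. Therefore the two fundamental decompositions of $X$ and $Y$ consist of the same summands (with the same suspension shifts), and so $X\he Y$. The only nontrivial ingredient is the rigidity of the $\M_2$-module decomposition, which has already been handled in Proposition~\ref{pr:perfect-modules}; no further obstacle arises.
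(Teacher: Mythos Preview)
Your proof is correct and follows essentially the same route as the paper: the paper's proof reads ``Immediate from Proposition~\ref{pr:perfect-modules} and Theorem~\ref{th:complex-splitting},'' and your argument is exactly the unpacking of this, namely decompose $X$ and $Y$ via the splitting theorem (equivalently Corollary~\ref{co:perfect-complexes-splitting}), pass to cohomology, and invoke the uniqueness of the $\M_2$-module decomposition from Proposition~\ref{pr:perfect-modules}. Indeed, what you have written is precisely the content of the proof of Proposition~\ref{pr:classify-2}, from which the corollary is then immediate.
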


\begin{proof}
Immediate from Proposition~\ref{pr:perfect-modules} and
Theorem~\ref{th:complex-splitting}.
\end{proof}


\section{Algebraic consequences of the classification theorem}

In this section we compute the Picard group of $\cD(\undt)$ as well as
the Balmer spectrum for
 $\cD(\undt)_{\perf}$, deducing both as consequences of our work
 in Section~\ref{se:complexes}.

\medskip

\subsection{The Picard group of \mdfn{$\cD(\undt)$}}

\begin{thm}
\label{th:Picard}
The Picard group of $\cD(\undt)$ is $\Z \oplus \Z$, with generators $\Sigma
H$ and $H(1)$.
\end{thm}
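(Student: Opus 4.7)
The plan has two parts: verifying that $\Sigma H$ and $H(1)$ generate a copy of $\Z \oplus \Z$, then showing every invertible object has the form $\Sigma^p H(q)$. The first part is quick: $\Sigma H$ is invertible with inverse $\Sigma^{-1} H$, and $H(1)$ is invertible with inverse $H(-1)$ by Proposition~\ref{pr:dbox}(iv). Since $H(n) \dbox H(m) \he H(n+m)$, the classes $[\Sigma H]$ and $[H(1)]$ generate the subgroup of classes $[\Sigma^p H(q)]$ for $p, q \in \Z$, and these are pairwise distinct by the uniqueness of decomposition in Proposition~\ref{pr:classify-2}.

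For the main content, let $X$ be an arbitrary invertible object with inverse $X^{-1}$. First I would argue $X$ is perfect. Using $X \dbox X^{-1} \he H$, one has $\cD(\undt)(X, -) \iso \cD(\undt)(H, X^{-1} \dbox -)$, and since $H$ is compact and $X^{-1} \dbox (-)$ preserves coproducts (it has right adjoint $\dcotens(X^{-1}, -)$), $X$ is compact, hence perfect.

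By Corollary~\ref{co:perfect-complexes-splitting}, decompose $X \he X_A \oplus X_H \oplus X_B$ by grouping summands according to strand type, and similarly $X^{-1} \he Y_A \oplus Y_H \oplus Y_B$. The key observation from Proposition~\ref{pr:dbox} is that the box product of two fundamental complexes never mixes strand types: $A$ with $A$ or $H$ gives $A$-strands (and zero with $B$), $H$ with $H$ gives $H$-strands, and $B$ with $B$ or $H$ gives $B$-strands. Hence the $H$-strand summands of $X \dbox X^{-1}$ come only from $X_H \dbox Y_H$, so both $X_H$ and $Y_H$ are nonzero (since $H$ appears on the right). Then if $X_A$ were nonzero, $X_A \dbox Y_H$ would contribute nonzero $A$-strand summands by Proposition~\ref{pr:dbox}(ii), contradicting the uniqueness in Proposition~\ref{pr:classify-2} applied to $H \he X \dbox X^{-1}$. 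Thus $X_A = 0$, and symmetrically $X_B = 0$.

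Finally, writing $X = \bigoplus_{i=1}^n \Sigma^{p_i} H(q_i)$ and $X^{-1} = \bigoplus_{j=1}^m \Sigma^{p_j'} H(q_j')$, one has $X \dbox X^{-1} \he \bigoplus_{i,j} \Sigma^{p_i + p_j'} H(q_i + q_j')$, which has $nm$ $H$-strand summands. Since $H$ has exactly one summand, Proposition~\ref{pr:classify-2} forces $nm = 1$, so $n = m = 1$ and $X \he \Sigma^p H(q)$. The main obstacle is ruling out the $A$- and $B$-summands in the previous step; this rests entirely on the uniqueness half of the classification theorem, so the real work is already in Proposition~\ref{pr:classify-2}.
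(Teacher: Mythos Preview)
Your proof is correct and follows essentially the same strategy as the paper: show invertible implies compact, apply the splitting theorem, and use the box-product formulas from Proposition~\ref{pr:dbox} together with uniqueness (Proposition~\ref{pr:classify-2}) to rule out $A$- and $B$-summands and reduce to a single $H$-strand. The paper organizes the elimination step slightly differently---it first proves the general fact that a nontrivial direct sum cannot be invertible (using that $H$ is indecomposable, so one summand of $(J\oplus K)\dbox Y\he H$ vanishes, hence $J$ or $K$ vanishes after boxing back with $X$), and then excludes the individual strands $A_k$ and $B_r$ via $A_k\dbox B_r\he 0$---but the ingredients and overall shape are the same.
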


\begin{proof}
First note that an invertible object in $\cD(\undt)$ is necessarily
compact, hence perfect.  To see this, observe that if $X$ is
invertible with inverse $Y$ and $\{Z_\alpha\}$ is any collection of
objects
then there is a commutative diagram
\[ \xymatrix{
\bigoplus_\alpha \cD(\undt)(X,Z_\alpha) \ar[r]\ar[d]_\iso &
\cD(\undt)(X,\bigoplus_\alpha Z_\alpha)\ar[d]^\iso \\
\bigoplus_\alpha \cD(\undt)(X\dbox Y ,Z_\alpha\dbox Y) \ar[r] &
\cD(\undt)(X\dbox Y,\bigl (\bigoplus_\alpha Z_\alpha\bigr )\dbox Y) \\
}
\]
Now use the fact that $(\blank)\dbox Y$ commutes with direct sums,
together with the fact that $X\dbox Y\he H$ is compact, to see that
the bottom horizontal map is an isomorphism.  Thus, the top horizontal
map is an isomorphism as well.

Since $X$ is compact, we know by Corollary~\ref{co:perfect-complexes-splitting}
that $X$ is quasi-isomorphic to a direct sum of shifts of complexes of type $A_k$, $B_r$, and
$H(n)$.  But it is easy to see that the direct sum must
only involve {\it one\/} term.  For suppose
 $X\simeq J\oplus K$, and let $Y$ be the inverse of $X$. 
Then
\[ H \he X\dbox Y \he (J\dbox Y)\oplus (K\dbox Y).
\]
By taking homology and using that $H$ is indecomposable as a $\undt$-module,
this can only happen if either $J\dbox Y\he 0$ or $K\dbox Y\he 0$.
Without loss of generality, we assume the former.  Then box with $X$
to get $0\he (J\dbox Y)\dbox X\he J$.  This proves that a nontrivial
direct sum can never be invertible.

The classification theorem then implies that the only possible
invertible objects are suspensions of $A_k$, $B_r$, and $H(n)$.
If $A_k$ has an inverse $W$, then take $A_k\bbox B_r\he 0$ and box
with $W$ to get $B_r\he 0$; this is a contradiction (by
Proposition~\ref{pr:homotopy-compute}(d), for example).  So $A_k$ is not
invertible, and the same proof shows that $B_r$ is not invertible.  Of
course we know that the $H(n)$ are invertible by Proposition~\ref{pr:dbox}(iv).

Consider the group homomorphism $\Z^2 \ra \Pic(\cD(\undt))$ sending $(m,n)$ to $\Sigma^m H \dbox H(n) \he \Sigma^m H(n)$.  The first generator of $\Z^2$ maps to $\Sigma H$ and the second to $H(1)$.  We
have just proven that this map is surjective.  For injectivity just
note that
the homology calculations of
Proposition~\ref{pr:homotopy-compute}(a,b) show that
$\Sigma^m H(n)\he H$
 can only
happen if $m=n=0$.
\end{proof}

\subsection{The Balmer spectrum of \mdfn{$\cD(\undt)_{\perf}$}}
\label{se:Balmer}

Recall that a tensor-triangulated category $(\cC,\tens)$ has an
associated topological space $\Spec \cC$ called the \dfn{Balmer
  spectrum} of $\cC$ \cite{B}.  The elements of $\Spec \cC$ are the primes
 of $\cC$, i.e.\ proper thick subcategories $\cI$ of $\cC$ having the
properties that
\begin{enumerate}[(i)]
\item If $X\in \cI$ and $Y\in \cC$ then $X\tens Y\in \cI$;
\item If $X,Y\in \cC$ and $X\tens Y\in \cI$ then either $X\in \cI$ or
$Y\in \cI$.
\end{enumerate}
Recall that a subcategory is thick if it is full and closed under
formations of suspensions and desuspensions, retracts, and extensions.  Thick
subcategories satisfying (i) are called \mdfn{tensor ideals}.

The topology on $\Spec \cC$ is
an analog of the Zariski topology from algebraic geometry.  For $X\in
\cC$ define $\Supp X=\{\cP\in \Spec \cC\,|\, X\notin \cP\}$.  Then
$\{\Supp X\,|\, X\in \cC\}$ is a basis for the closed sets in $\Spec
\cC$.

A convenient source of tensor ideals is via annihilators.
If $\cS$ is a set of objects in $\cC$, let
$\Ann \cS=\{X\in \cC\,|\,X\tens Y\he 0\ \text{for all $Y\in \cS$}\}$.
Then $\Ann \cS$ is a tensor ideal.

One more piece of notation: is $\cS$ is a set of objects in $\cC$,
write $\bSigma\cS$ for the closure of $\cS$ under suspension and
desuspension.  

By Proposition~\ref{pr:K-category} the following result can also be
interpreted as computing the Balmer spectrum for the triangulated
category $(K_{b,fg}(\Z/2[C_2]),\tens)$.  In that context, the
computation was independently done by Balmer and Gallauer \cite{BG}.

\begin{thm}
\label{th:Balmer-spectra}
There are only three prime ideals in $(\cD(\undt)_{\perf},\dbox)$:
\begin{itemize}
\item the full
subcategory $\langle A\rangle$ whose objects are the finite direct sums made from the set
$\bSigma\{A_k\,|\,k\geq 0\}$,
\item the full subcategory $\langle B\rangle$
whose objects are the finite direct sums made from the set
$\bSigma\{B_r\,|\,r\geq 0\}$,
\item the full
subcategory $\langle A,B\rangle$
whose objects are the finite direct sums made from the set
 $\bSigma\{A_k,B_r\,|\, k,r\geq 0\}$.
\end{itemize}
The first two are closed points of the Balmer spectrum, whereas the closure of
the third point is the whole space.  This is depicted via the diagram
\[ \xymatrix{
\langle A\rangle && \langle B\rangle\\
& \langle A,B\rangle.\ar[ur]\ar[ul]
}
\]
\end{thm}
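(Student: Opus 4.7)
The plan is to first verify that the three listed subcategories are indeed prime tensor ideals and then argue there are no others, using the classification of perfect complexes from Corollary~\ref{co:perfect-complexes-splitting} and the explicit derived box products tabulated in Proposition~\ref{pr:dbox}. Thickness and the tensor-ideal property for $\langle A\rangle$, $\langle B\rangle$, and $\langle A,B\rangle$ follow almost immediately from Proposition~\ref{pr:dbox}: for instance boxing any $A_k$ with any fundamental complex yields a sum of $A$-strands or zero, so $\langle A\rangle$ is a tensor ideal. Closure under retracts and extensions uses the uniqueness of decompositions from Proposition~\ref{pr:classify-2}: a retract of a sum of $A$-strands can only have $A$-summands appearing in its canonical decomposition, and the middle term of a cofiber sequence between such sums must, by the same uniqueness, be free of $H$- and $B$-summands.

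For the prime property, I would decompose any perfect complex $X$ as $X\simeq X_A\oplus X_B\oplus X_H$ according to its $A$-, $B$-, and $H$-summands, and expand $X\dbox Y$ into its nine cross terms. By Proposition~\ref{pr:dbox}, only $X_H\dbox Y_H$ can contribute $H$-summands; only $X_H\dbox Y_B$, $X_B\dbox Y_H$, and $X_B\dbox Y_B$ can contribute $B$-summands; and each of these products is nonzero whenever both factors are. A case analysis then shows $X\dbox Y\in\langle A\rangle$ forces $X_H=0$ or $Y_H=0$, and subsequently $X_B=0$ or $Y_B=0$, hence $X\in\langle A\rangle$ or $Y\in\langle A\rangle$. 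The argument for $\langle B\rangle$ is symmetric, and for $\langle A,B\rangle$ the only obstruction to membership is the presence of $H$-summands, which again reduces to $X_H\dbox Y_H=0$.

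To see that these are the only primes, let $\cP$ be any prime ideal. Since $\cP$ is proper, $H\notin\cP$, and as each $H(n)$ is invertible, none lies in $\cP$. If some $A_k\in\cP$, then $A_0\dbox A_k\simeq A_0\oplus\Sigma^k A_0\in\cP$, so $A_0\in\cP$ by thickness, and the cofiber sequences $A_0\to A_m\to\Sigma A_{m-1}$ inductively place every $A_m$ in $\cP$; the analogous assertion holds for $B$-strands. Thus $\cP$ is determined by whether $A_0\in\cP$ and whether $B_0\in\cP$. The case where neither holds forces $\cP=\{0\}$, but this fails to be prime since $A_0\dbox B_0\simeq 0$ with neither factor zero. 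The remaining three cases produce exactly $\langle A\rangle$, $\langle B\rangle$, and $\langle A,B\rangle$. The topology is then read off from $\Supp B_0=\{\langle A\rangle\}$, $\Supp A_0=\{\langle B\rangle\}$, and $\Supp H$ being the entire space, together with the formula $\overline{\{\cP\}}=\{\cQ\mid\cQ\subseteq\cP\}$. The main obstacle is the prime-property verification in step one, which requires careful bookkeeping of all nine cross terms of $X\dbox Y$ and a systematic appeal to the uniqueness theorem in Proposition~\ref{pr:classify-2} to isolate the $A$-, $B$-, and $H$-parts of the product.
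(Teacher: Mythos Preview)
Your overall strategy matches the paper's and is correct in outline.  The one genuine gap is in your verification that $\langle A\rangle$, $\langle B\rangle$, and $\langle A,B\rangle$ are closed under extensions.  You claim that ``the middle term of a cofiber sequence between such sums must, by the same uniqueness, be free of $H$- and $B$-summands,'' but Proposition~\ref{pr:classify-2} only says the decomposition of the middle term is unique; it does not tell you \emph{which} summands occur.  Given a triangle $X\to Y\to Z$ with $X,Z\in\langle A\rangle$, uniqueness alone does not preclude $Y$ from acquiring an $H(n)$- or $B_r$-summand.

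The paper closes this gap as follows.  For $\langle A\rangle$ and $\langle B\rangle$ it observes, via the splitting theorem and Proposition~\ref{pr:dbox}, that $\langle A\rangle=\Ann\{B_r\}$ and $\langle B\rangle=\Ann\{A_k\}$; annihilator ideals are automatically thick tensor ideals, so nothing further is needed.  For $\langle A,B\rangle$ no annihilator description is available, and here the paper invokes Proposition~\ref{pr:AD}: there are no nonzero maps in $\cD(\undt)$ between $A$-strands and $B$-strands in either direction, so any morphism $f$ between objects of $\langle A,B\rangle$ splits as $f=f_A\oplus f_B$, giving $\Cof(f)\simeq\Cof(f_A)\oplus\Cof(f_B)$ with the two pieces lying in $\langle A\rangle$ and $\langle B\rangle$ by the thickness already established.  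With these two ingredients patched in, the rest of your outline---the enumeration of primes, the primality case analysis via the nine cross terms, and the determination of the topology---goes through unchanged and agrees with the paper's argument.
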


\begin{proof}
First note that if a tensor ideal contains $H(n)$ then it also contains
$H$, using that $H(n)$ is invertible.  Therefore it contains every
object, and so is not a proper ideal.

We make the following observations:
\begin{itemize}
\item Since $A_k\bbox B_r\he 0$ by Proposition~\ref{pr:dbox}(iii), 
any prime ideal must contain either
$A_k$ or $B_r$.
\item By Proposition~\ref{pr:dbox}(i), if a tensor ideal contains
$A_k$ then it contains all $A_i$ for $i\leq k$.
\item Using the cofiber sequences $A_0\ra A_{n+1}\ra \Sigma A_n$ and induction, any
tensor ideal containing $A_0$ must contain all other $A_n$.
\item By Proposition~\ref{pr:dbox}(vi), if a tensor ideal contains
$B_r$ then it contains all $B_i$ for $i\leq r$.
\item Using the cofiber sequences $B_0\ra B_{n+1}\ra \Sigma^{-1}B_n$
and induction, any tensor ideal containing
$B_0$ contains all other $B_n$.
\end{itemize}
It follows at once from Theorem~\ref{th:complex-splitting}
that the only possible prime ideals are $\langle
A\rangle$, $\langle B\rangle$, and $\langle A,B\rangle$.

We must next check that each of these really is a prime ideal.
Using Theorem~\ref{th:complex-splitting} and Proposition~\ref{pr:dbox}
it follows immediately that
 $\langle A\rangle=\Ann \{B_r\}$, and so is a tensor ideal.
Suppose $X$ and $Y$ are perfect complexes and $X\dbox Y\in \langle
A\rangle$.  By Theorem~\ref{th:complex-splitting} we can write

\[ X\he \bigoplus_{\alpha\in \cJ} A_{\alpha} \,\oplus \, \bigoplus_{\beta
\in \cK} H(\beta) \,\oplus \,\bigoplus_{\gamma\in \cL} B_{\gamma}
\]
where the formula should also have various suspensions on all the
factors, which we have omitted to write.
Similarly, we can write
\[ Y\he \bigoplus_{\alpha' \in \cJ'} A_{\alpha'} \,\oplus \, \bigoplus_{\beta'
\in \cK'} H(\beta') \,\oplus \,\bigoplus_{\gamma'\in \cL'} B_{{\gamma'}}.
\]
Since $X\dbox Y \in \langle A\rangle$ we conclude immediately that
either $\cK$ or $\cK'$ is empty; without loss of generality we assume
$\cK=\emptyset$.

If $\cK'\neq \emptyset$ then we must have $\cL=\emptyset$, else we
have a $B$-type summand in $X\dbox Y$.  But this yields $X\in
\langle A\rangle$.
So now assume $\cK'=\emptyset$.  If $\cL\neq\emptyset$ and $\cL'\neq
\emptyset$ then we again get a $B$-type summand in $X\dbox Y$; so
either $\cL=\emptyset$ or $\cL'=\emptyset$.  But this precisely says
that either $X\in \langle A\rangle$ or $Y\in \langle A\rangle$.  This
completes the proof that $\langle A\rangle$ is prime.

The same style of argument shows that $\langle B\rangle$ is prime.

It remains to prove that $\langle A,B\rangle$ is a prime ideal.
This is largely similar to what we have already done, except the proof
that the subcategory is closed under extensions.  For this, assume that
\[ X=\bigoplus_{\alpha \in \cJ} A_\alpha \,\oplus\, \bigoplus_{\beta \in
  \cK} B_\beta, \qquad
 Y=\bigoplus_{\alpha' \in \cJ'} A_{\alpha'} \,\oplus\, \bigoplus_{\beta' \in
  \cK'} B_{\beta'}
\]
(again, with suspensions on all summands omitted for brevity).
We must check that for any map $f\colon X\ra Y$ the cofiber $\Cof(f)$
is still in $\langle A,B\rangle$.  Here we use
Proposition~\ref{pr:AD} to see that there are no maps from the
$A_k$ to the $B_r$ and vice versa, so that our map $f$ must split as
$f_1\oplus f_2$ where $f_1\colon \bigoplus_\alpha A_\alpha \ra
\bigoplus_{\alpha'} A_{\alpha'}$ and $f_2\colon \bigoplus_\beta
B_\beta \ra \bigoplus_{\beta'} B_{\beta'}$.  Since we have already
proven $\langle A\rangle$ and $\langle B\rangle$ are thick,
we have $\Cof(f_1)\in \langle A\rangle$ and $\Cof(f_2)\in\langle
B\rangle$.
Since $\Cof(f)=\Cof(f_1)\oplus \Cof(f_2)$, we are done.

Finally, it remains to investigate the topology on $\Spec \cD(\undt)$.
For $k\geq 0$ one clearly has that $\Supp(B_k)=\{\langle
A\rangle\}$ and $\Supp(A_k)=\{\langle B\rangle\}$, using
Proposition~\ref{pr:dbox}.  These generate the
closed sets of $\Spec \cD(\undt)$, so the topology is as described in the
statement of the theorem.
\end{proof}

For good measure we also determine the Balmer spectrum of the category
$\cD(\undt)_{bb,fg}$ consisting of complexes which are bounded below
and have finitely-generated projective modules in each degree.  Let
$\langle A,A_\infty \rangle$ denote the full subcategory whose objects are the direct
sums made from the set $\bSigma\{A_k\,|\,0\leq k\leq \infty\}$, with
arbitrary indexing sets but
where the direct sum must be bounded below and finitely-generated in
each degree.  Similarly, define the full subcategories $\langle
B,B_\infty\rangle$, $\langle A,B,A_\infty\rangle$, and $\langle
A,B,B_\infty\rangle$.

\begin{thm}
\label{th:Balmer-spectra-bb}
The Balmer spectrum for $(\cD(\undt)_{bb,fg},\bbox)$ consists of 
exactly four points, separated into two connected components as
depicted here:
\[ \xymatrix{
\langle A,A_\infty \rangle && \langle B,B_\infty \rangle\\
 \langle A,B,A_\infty\rangle\ar[u] &&
 \langle A,B,B_\infty\rangle.\ar[u]
}
\]
The prime ideals in the top row are closed points of the Balmer spectrum, whereas
\[
\overline{\langle A,B,A_\infty\rangle}=\bigl \{ \langle
A,B,A_\infty\rangle,
\langle A,A_\infty\rangle\bigr \}\ \text{and}\
\overline{\langle A,B,B_\infty\rangle}=\bigl \{ \langle
A,B,B_\infty\rangle,
\langle B,B_\infty\rangle\bigr\}.
\]
The inclusion of categories $\cD(\undt)_{\perf} \inc
\cD(\undt)_{bb,fg}$ induces a map of the corresponding Balmer spectra
in the other direction: this is the
quotient map which sends
$\langle A,B,A_\infty\rangle$ and $\langle A,B,B_\infty\rangle$ to the same point,
namely $\langle A,B\rangle$.
\end{thm}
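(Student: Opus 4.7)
The strategy closely follows the proof of Theorem~\ref{th:Balmer-spectra}, but the splitting theorem (Theorem~\ref{th:bd-below-splitting}) now produces strands from the enlarged list $\{A_k, B_r, H(n), A_\infty, B_\infty\}$. The invertibility of $H(n)$ keeps it out of any proper prime, and the box product identities in Proposition~\ref{pr:dbox}(vii), together with the cofiber sequence $H \to B_\infty \to \Sigma A_\infty$ used in the proof of Proposition~\ref{pr:cohom-rings-2}, provide the additional input needed to handle $A_\infty$ and $B_\infty$.

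The central new step is to show that every prime $\cP$ contains exactly one of $A_\infty$ and $B_\infty$. At least one is forced, since $A_\infty \dbox B_\infty \he 0 \in \cP$. Not both: if $A_\infty \in \cP$ and $B_\infty \in \cP$, then applying the triangle $H \to B_\infty \to \Sigma A_\infty \to \Sigma H$ and thickness of $\cP$ yields $H \in \cP$, contradicting properness. Suppose $A_\infty \in \cP$ while $B_\infty \notin \cP$. Primality applied to $A_0 \dbox B_\infty \he 0 \in \cP$ forces $A_0 \in \cP$, and then the cofiber sequences $A_0 \to A_{n+1} \to \Sigma A_n$ and induction place all finite $A_k$ in $\cP$. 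Whether any $B_r$ lies in $\cP$ is the dichotomy familiar from the perfect case: if none, then $\cP = \langle A, A_\infty \rangle$; if some, then all $B_r \in \cP$ via $B_0 \to B_{n+1} \to \Sigma^{-1} B_n$, giving $\cP = \langle A, B, A_\infty \rangle$. The symmetric analysis with $B_\infty \in \cP$ yields the remaining two candidates.

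The verification that each candidate is actually a prime tensor ideal follows the template of Theorem~\ref{th:Balmer-spectra}. Closure under direct sums, suspensions, and retracts rests on a bounded-below analog of Proposition~\ref{pr:classify-2}, which follows by reapplying the bigraded cohomology argument using the computations of $H^{*,*}(A_\infty)$ and $H^{*,*}(B_\infty)$ from Proposition~\ref{pr:cohom-rings-2}. Closure under extensions uses that there are no nonzero maps between $A$-type and $B$-type strands (Proposition~\ref{pr:AD}, extended to the $A_\infty, B_\infty$ cases by the same $\dcotens$-vanishing argument), so any map between decomposed objects splits into an $A$-part and a $B$-part, and so does its cofiber. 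The tensor ideal and primality conditions are then strand-by-strand checks using Proposition~\ref{pr:dbox}(vii) in place of Proposition~\ref{pr:dbox}.

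For the topology, the basic closed sets are computed directly: $\Supp(A_k) = \{\langle B, B_\infty \rangle\}$, $\Supp(B_r) = \{\langle A, A_\infty \rangle\}$, $\Supp(A_\infty) = \{\langle B, B_\infty \rangle, \langle A, B, B_\infty \rangle\}$, and $\Supp(B_\infty) = \{\langle A, A_\infty \rangle, \langle A, B, A_\infty \rangle\}$. These immediately give the claimed two-component Hasse diagram. The map induced by the inclusion $\cD(\undt)_\perf \inc \cD(\undt)_{bb,fg}$ sends $\cP$ to $\cP \cap \cD(\undt)_\perf$; since no perfect complex has $A_\infty$ or $B_\infty$ as a summand, this intersection discards the $A_\infty, B_\infty$ markers and yields the stated quotient. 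The main technical obstacle is the bounded-below uniqueness of decomposition, together with the need to treat $A_\infty$ and $B_\infty$ as genuinely independent generators in all closure arguments: thick subcategories are closed only under finite triangulated operations, so containing every finite $A_k$ does not automatically mean containing $A_\infty$.
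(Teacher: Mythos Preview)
Your proposal is correct and follows essentially the same approach as the paper's proof: the same cofiber sequence $H \to B_\infty \to \Sigma A_\infty$ for the ``exactly one of $A_\infty$, $B_\infty$'' dichotomy, the same annihilator and $A$/$B$-splitting arguments for verifying primality and triangulated closure, and the same support computations for the topology. One small variation: to put all $A_k$ in a prime containing $A_\infty$, the paper uses the tensor-ideal identity $A_k \dbox A_\infty \he A_k$ directly, whereas you route through primality applied to $A_0 \dbox B_\infty \he 0$ and then induct---both work equally well.
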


\begin{proof}
The cofiber sequence $H\ra B_\infty \ra \Sigma A_\infty$ shows that if
a tensor ideal contains both
$A_\infty$ and $B_\infty$ then it contains everything.  So no prime
ideal contains both $A_\infty$ and $B_\infty$, and since
$A_\infty \bbox B_\infty \he 0$ it follows that every prime ideal must contain
exactly one of  $A_\infty$ or $B_\infty$.

If $P$ is a prime ideal containing $A_\infty$, then using $A_k\bbox
A_\infty \he A_k$ it must also contain all the $A_k$.  If it contains
any $B_r$ then just as in the proof of Theorem~\ref{th:Balmer-spectra} it must
contain all of the $B_n$.  Since $P$ cannot contain any of the
invertible objects
$H(n)$, this shows that $P$ is either $\langle A,A_\infty\rangle$ or
$\langle A,B,A_\infty\rangle$.

Similar reasoning for the case where $P$ contains $B_\infty$ shows that
the only possible prime ideals are the four from the statement of the
theorem.  It remains to check that these are indeed prime.

One readily checks using Proposition~\ref{pr:dbox} and
Theorem~\ref{th:bd-below-splitting} that
$\Ann(B_\infty)=\langle A,A_\infty\rangle$, so this is a tensor
ideal.  Primality also readily follows from those two results.
Similarly, $\Ann(A_\infty)=\langle B,B_\infty\rangle$ is a prime
ideal.

Similar considerations show that $\langle A,B,A_\infty\rangle$ is a prime
ideal, but here one must work a bit harder to check that it gives a
triangulated subcategory.  If $X$ is a direct sum of shifts of copies of $A_k$,
$B_r$, and $A_\infty$, write $X(A)$ for the direct sum of the pieces
of $A$-type, and $X(B)$ for the direct sum of pieces of $B$-type.
If $S$ is a single strand, of type $A$ or $B$, we claim that any map
$S\ra X$ in $\cD(\undt)$ must factor through $X(A)$ or $X(B)$,
respectively.  This is an easy computation.  This claim then yields that
$\langle A,B,A_\infty\rangle$ is triangulated by the argument from the
proof of Theorem~\ref{th:Balmer-spectra}.

The topology on the Balmer spectrum is readily identified using
Proposition~\ref{pr:dbox}, since one easily computes the support of
any object.  The computation of the map $\Spec \cD(\undt)_{bb,fg} \ra
\Spec \cD(\undt)_{\perf}$ is immediate.
\end{proof}

\section{Topological consequences of the main theorem}
\label{se:top-consequences}

In this section we explain how our description of $\cD(\undt)$ leads
to various topological results about equivariant $H\undt$-modules and bigraded Bredon cohomology with
coefficients in $\undt$.

\medskip

Let $R$ be a Mackey ring.  There is an associated equivariant
Eilenberg--MacLane ring spectrum $HR$.  As explained in
\cite[Corollary 5.2]{Z} for the special case of $R=\und{\Z}$ (though
it works in general), the
general theory developed by Schwede--Shipley in \cite{SS} gives a
Quillen equivalence between the algebraic model category of
$\Ch(R)$ and the topological category of $HR$-modules:

\smallskip

\addtocounter{subsection}{1}
\begin{equation}
\label{eq:QE}
\xymatrix{
\Ch(R) \ar@/^1pc/[r]^\Gamma &  {\ \ \ \ \ \ HR-\Mod}
\ar@/^1pc/[l]^\Psi \ar@{}[l]|\he.
}
\end{equation}

\smallskip
\noindent
In particular, the homotopy category of $\Ch(\undt)$ is
equivalent to the homotopy category of $H\undt-\Mod$.  The Quillen
equivalence is set up so that $H\undt$ and $H\undt \Smash (C_2)_+ $
correspond to $H$ and $F$ respectively.  By examining cell structures for
$S^{p,q}$ one finds that $H\undt \Smash S^{p,q}$ corresponds to
$\Sigma^p H(q)$ under the Quillen equivalence.  It follows that if $M$
is an $H\undt$-module then its bigraded cohomology
$\Hom_{H\undt}(M,\Sigma^{*,*}H\undt)$, as an $\M_2$-module, is
isomorphic to the bigraded cohomology of $\Psi(M)$.  Here we may use
either Mackey functor valued cohomology or consider only the $\cpdot$
side.  Since compact objects in $\cD(\undt)$ are determined by their  bigraded cohomology (see
Corollary~\ref{co:homology-determines}), this implies that when $M$ is
a finite $H\undt$-module $\Psi(M)$
is completely determined by the $\M_2$-module $\Hom_{H\undt}(M,\Sigma^{*,*}H\undt)$.

If $S^k_a$ denotes a $k$-sphere with the antipodal $C_2$-action, then one
readily finds that $H\undt \Smash (S^k_a)_+ $ corresponds to the fundamental complex $A_k$: this can
be done either by an analysis of explicit cell structure on $S^k_a$,
or by computing the bigraded homology of $S^k_a$, and
recognizing it as that of $A_k$.

It remains to identify the $H\undt$-module corresponding to $B_r$.
For this, recall that there is a unique nonzero homotopy class
$\tau\colon H\undt \ra \Sigma^{0,1}H\undt$.   Write $\Cof(\tau)$ for
the homotopy cofiber of this map, and $\Cof(\tau^r)$ for the homotopy
cofiber of $\tau^r\colon H\undt \ra \Sigma^{0,r}H\undt$.  One readily
computes the bigraded cohomology and
observes that it exactly matches the cohomology of $B_{r-1}$.

In light of the Quillen equivalence from (\ref{eq:QE}) we can
 thus reinterpret Corollary \ref{co:perfect-complexes-splitting} as
 follows:

\begin{thm}
\label{th:spectrum-splitting}
Let $M$ be a finite $H\undt$-module.  Then up to weak equivalence $M$
splits as a wedge of bigraded suspensions of $H\undt$, $H\undt \Smash (S^k_a)_+ $, and $\Cof(\tau^{r})$, for various $k \geq 0$ and $r \geq 1$.
\end{thm}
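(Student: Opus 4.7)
The plan is to transport the algebraic splitting theorem across the Quillen equivalence of \eqref{eq:QE}, using that the relevant $H\undt$-modules have already been matched with the fundamental complexes in the discussion preceding the statement.

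First I would observe that, since $\Gamma$ and $\Psi$ form a Quillen equivalence between $\Ch(\undt)$ and $H\undt\text{-Mod}$, the derived functor of $\Psi$ restricts to an equivalence between the homotopy category of finite $H\undt$-modules and the full subcategory $\cD(\undt)_{\perf}$ of perfect complexes. So it suffices to prove the analogous splitting on the algebraic side: given a finite $H\undt$-module $M$, apply $\Psi$ to obtain a perfect complex $\Psi(M) \in \cD(\undt)_{\perf}$, and invoke Corollary~\ref{co:perfect-complexes-splitting} to split it as
\[
\Psi(M) \he \bigoplus_i \Sigma^{p_i} H(q_i) \,\oplus\, \bigoplus_j \Sigma^{s_j} A_{k_j} \,\oplus\, \bigoplus_l \Sigma^{t_l} B_{r_l}.
\]
Applying $\Gamma$ (which preserves direct sums and bigraded suspensions, the latter because $\Gamma$ is symmetric monoidal up to homotopy and $S^{p,q}$ corresponds to $\Sigma^p H(q)$) then exhibits $M$ as the desired wedge, once the three types of summands are identified topologically.

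The identifications themselves are the main verification, and all three follow the same template: compute the bigraded cohomology $\Hom_{H\undt}(\blank, \Sigma^{*,*}H\undt)$ of the proposed $H\undt$-module, compare it with the $\M_2$-module $H^{*,*}(\blank)$ of the proposed fundamental complex, and conclude equivalence via Corollary~\ref{co:homology-determines} combined with the fact that $\Psi$ intertwines the two cohomology theories. For $H\undt \Smash S^{p,q}$ this is essentially the definition, giving the $H(q)$-summands. For $H\undt \Smash (S^k_a)_+$, a standard calculation of the bigraded cohomology of $S^k_a$ (a vertical strip of width $k+1$) matches the picture of $H^{*,*}(A_k)$ in Figure~\ref{fig:A}, so this module corresponds to $A_k$. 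For $\Cof(\tau^r)$, the defining cofiber sequence $H\undt \llra{\tau^r} \Sigma^{0,r} H\undt \lra \Cof(\tau^r)$ and the known $\tau$-action on $\M_2$ give a cohomology exactly matching $\Sigma^{?,?} H^{*,*}(B_{r-1})$ (a diagonal strip); Corollary~\ref{co:homology-determines} again forces $\Cof(\tau^r) \simeq \Gamma(\Sigma^{?,?} B_{r-1})$ up to the appropriate bigraded shift.

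The main obstacle is getting the bookkeeping right in the third identification: one must pin down the precise suspension relating $\Cof(\tau^r)$ to $B_{r-1}$, and verify that the $\M_2$-module structure (not just the abelian-group cohomology) agrees, since it is the $\M_2$-module structure that Corollary~\ref{co:homology-determines} requires. Once the three identifications are established, no further work is needed: the algebraic splitting is a coproduct decomposition in $\cD(\undt)$, $\Gamma$ preserves coproducts of perfect objects, and the result is precisely the claimed wedge decomposition on the topological side.
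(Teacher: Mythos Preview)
Your proposal is correct and follows essentially the same route as the paper: transport Corollary~\ref{co:perfect-complexes-splitting} across the Quillen equivalence, having matched $\Sigma^p H(q)$, $A_k$, and $B_{r-1}$ with $H\undt\Smash S^{p,q}$, $H\undt\Smash (S^k_a)_+$, and $\Cof(\tau^r)$ respectively via their bigraded cohomology and Corollary~\ref{co:homology-determines}.

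One caution: you justify that $\Gamma$ preserves bigraded suspensions by asserting it is ``symmetric monoidal up to homotopy,'' but the paper explicitly disclaims this (see the remark immediately following the theorem). The paper instead obtains the correspondence $H\undt\Smash S^{p,q}\leftrightarrow \Sigma^p H(q)$ by examining cell structures on $S^{p,q}$, and you should appeal to that rather than to monoidality. This does not affect the validity of your argument, only its justification at that one step.
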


\begin{remark}
We have not proven that the Quillen equivalence from (\ref{eq:QE}) is
symmetric monoidal, but it is.  There is a folklore proof using
$\infty$-categorical techniques, and a proof is forthcoming in work in progress by Drew Heard and the third author.  Therefore the results of Section~\ref{se:Balmer} can be
reinterpreted as 
computing the Balmer spectra for various homotopy categories of 
 $H\undt$-modules.  
\end{remark}

\subsection{Structure theorem for $C_2$-spaces}
The classification of finite $H\undt$-modules in Theorem
\ref{th:spectrum-splitting} implies the following
structure theorem for $RO(C_2)$-graded cohomology of $C_2$-spaces from \cite{M}.

\begin{thm}[C. May]
\label{th:structure-thm}
Let $X$ be a pointed finite $C_2$-CW complex.  Then $H\undt \Smash X$
splits as a wedge of bigraded suspensions of $H\undt$ and $H\undt \Smash (S^k_a)_+ $ for various $k \geq 0$.
\end{thm}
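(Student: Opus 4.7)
The plan is to deduce Theorem~\ref{th:structure-thm} as a refinement of Theorem~\ref{th:spectrum-splitting}. Applying that splitting theorem to the finite $H\undt$-module $H\undt \Smash X$ gives
\[ H\undt \Smash X \simeq A \vee B \vee C, \]
where $A$ is a wedge of bigraded suspensions of $H\undt$, $B$ is a wedge of bigraded suspensions of $H\undt \Smash (S^k_a)_+$, and $C$ is a wedge of bigraded suspensions of $\Cof(\tau^r)$ for various $r\geq 1$. The assertion of Theorem~\ref{th:structure-thm} is precisely that no $\Cof(\tau^r)$ summand occurs, i.e., $C \simeq *$.

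The invariant I would use to detect the summands of $C$ is the $\rho$-inverted bigraded cohomology $H^{*,*}(\blank;\undt)[\rho^{-1}]$. Under the correspondences $H\undt \Smash (S^k_a)_+ \leftrightarrow A_k$ and $\Cof(\tau^r) \leftrightarrow B_{r-1}$, Proposition~\ref{pr:cohom-rings} records the contributions: each $\Sigma^{p,q} H\undt$ contributes a free rank-one $\M_2[\rho^{-1}]$-summand; each $\Sigma^{p,q} H\undt \Smash (S^k_a)_+$ contributes zero, since its cohomology $\M_2[\tau^{-1}]/(\rho^{k+1})$ is $\rho$-torsion and vanishes upon $\rho$-inversion; and each $\Sigma^{p,q} \Cof(\tau^r)$ contributes the nonzero $\tau$-torsion summand $\M_2[\rho^{-1}]/(\tau^r)$, suitably shifted. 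Hence it is enough to show that $H^{*,*}(H\undt \Smash X)[\rho^{-1}]$ is $\tau$-torsion-free.

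For that last step I would invoke the classical $C_2$-localization isomorphism, which says that for a finite $C_2$-CW complex $X$ the inclusion $X^{C_2} \inc X$ induces
\[ H^{*,*}(X;\undt)[\rho^{-1}] \;\iso\; H^*(X^{C_2};\Z/2) \tens_{\Z/2} \M_2[\rho^{-1}]. \]
Since $\M_2[\rho^{-1}] \iso \Z/2[\tau,\rho^{\pm 1}]$ is a polynomial ring in $\tau$ over the Laurent ring $\Z/2[\rho^{\pm 1}]$ and $H^*(X^{C_2};\Z/2)$ is a free $\Z/2$-module, the right-hand side is a free and therefore $\tau$-torsion-free $\M_2[\rho^{-1}]$-module. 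This forbids any $\Cof(\tau^r)$ summand in $C$ and so completes the proof. The main obstacle is the localization isomorphism itself, which is a standard fact of $C_2$-equivariant cohomology; the usual proof runs via the isotropy-separation cofiber sequence $(EC_2)_+\Smash X \to X \to \widetilde{EC_2}\Smash X$, combined with the facts that $\rho$-inverted Bredon cohomology vanishes on any spectrum built from free $C_2$-cells (killing the first term) and that smashing with $\widetilde{EC_2}$ collapses $X$ onto $X^{C_2}$.
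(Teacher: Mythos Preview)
Your argument is correct and essentially identical to the paper's: both start from Theorem~\ref{th:spectrum-splitting}, then use the $\rho$-localization isomorphism $\rho^{-1}H^{*,*}(X)\cong H^*_{\mathrm{sing}}(X^{C_2})\otimes \rho^{-1}\M_2$ (which the paper cites from \cite{M}) to see that the $\rho$-inverted cohomology is $\tau$-torsion-free and hence no $\Cof(\tau^r)$ summands can appear. Your sketch of the localization isomorphism via the isotropy-separation sequence is a nice addition but not something the paper itself includes.
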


\begin{proof}
From Theorem \ref{th:spectrum-splitting} we know that $H\undt \Smash X$ splits as a wedge of suspensions of $H\undt$, $H\undt \Smash (S^k_a)_+ $, and $\Cof(\tau^{r})$, for various $k$ and $r$.  So it remains to show there cannot be any summands of the form $\Cof(\tau^r)$.
Recall from Lemma 4.3 of \cite{M} that $\rho$-localization of the cohomology of a finite $C_2$-CW complex is
\[
\rho^{-1}H^{*,*}(X) \cong \rho^{-1}H^{*,*}(X^{C_2}) \cong H^{*}_{sing}(X^{C_2}) \otimes \rho^{-1}\M_2.
\]
Notice $\rho^{-1}\M_2$ does not have any $\tau$-torsion and thus neither does $\rho^{-1}\tilde{H}^{*,*}(X) \cong \rho^{-1} \Hom_{H\undt}(H\undt \Smash X,\Sigma^{*,*}H\undt)$, since it is a free $\rho^{-1}\M_2$-module.
However, $\rho$-localization preserves the cohomology of $\Cof(\tau^r)$, which has $\tau$-torsion by construction.  Thus $H\undt \Smash X$ cannot have any wedge summands of the form $\Cof(\tau^r)$ for any $r$.
\end{proof}

\subsection{Toda bracket decomposition of $1$}
A key piece of the proof of the structure theorem for $RO(C_2)$-graded
cohomology of $C_2$-spaces from \cite{M} is the Toda bracket
decomposition of $1$ in $\M_2$:
$\langle \tau,
\theta, \rho \rangle = 1$ with zero indeterminacy.  This Toda bracket
can be witnessed in $Ch(\undt)$ as follows.

Recall the maps representing $\rho$, $\theta$, and $\tau$ that were
given in Figure~\ref{fig:rhothetatau}.  For our present purposes it
will be better to represent $\theta$ as a map $\Sigma H(1)\ra
\Sigma H(-1)$ and $\tau$ as a map $\Sigma H(-1)\ra \Sigma H$,  namely the following (with complexes drawn
horizontally):

\[\xymatrixrowsep{1pc}
\xymatrix{\Sigma H(1)\colon \ar[d]^\theta &  & F \ar[d]^{\id}\ar[r]& H &&
  \Sigma H(-1)\colon \ar[d]^\tau  & H \ar[r] & F\ar[d] \\
\Sigma H(-1)\colon & H\ar[r] & F &&& \Sigma H\colon  & & H  \\
& 2 & 1 & 0 && & 2 & 1
}
\]

\vspace{0.1in}

The left diagram below depicts the composition $\tau\circ \theta\circ
\rho$, where we now switch to drawing complexes vertically:
\begin{center}
\begin{tikzcd}
&                            & H \arrow{d}{p} &   & &&0 \arrow{d}\\
&F \arrow{r}{1} \arrow{d}{p} & F \arrow{r}{p} & H
&& F \arrow{r}{p} \arrow{d}{p} \arrow{ur}{h}  & H           \\
H \arrow{r}{1} & H                           &                &
&& H. \arrow{ur}{}[swap]{h=1}                         &
\end{tikzcd}
\end{center}

\noindent
To construct an element of the Toda bracket $\langle
\tau,\theta,\rho\rangle$ we start by choosing null-homotopies for
$\tau\theta$ and $\theta\rho$.
But $\theta \circ \rho = 0$ on the nose, so we can use the zero
null-homotopy there.
The second composition $\tau \circ \theta$ is not zero in  degree $1$,
but it is null-homotopic via the null homotopy $h$ shown
in the right diagram above.
Finally, forming $h \circ \rho$ gives
\begin{center}
\begin{tikzcd}
               & F  \arrow{d}{1} &  \\
H \arrow{r}{1} & H \arrow{r}{1}  & H,
\end{tikzcd}
\end{center}
and the composite is  the identity $H \to H$.  Thus $1 \in \langle
\tau, \theta, \rho \rangle$.  The indeterminacy of the Toda bracket is
zero  since $\M_2^{0,0}$ contains no $\tau$ or
$\rho$-multiples for degree reasons.  So we may conclude the bracket
identity $\langle \tau, \theta, \rho \rangle =1$.

\begin{remark}
The splitting for finite $H\undt$-modules in Theorem~\ref{th:spectrum-splitting} is stated here in parallel with Theorem~\ref{th:structure-thm}.  In this form, there appears to be a lack of symmetry of the fundamental objects.  However, we may reinterpret $H\undt \Smash (S^k_a)_+$ as a desuspension of $\Cof(\rho^{k+1})$, where $\rho \colon H\undt \ra \Sigma^{1,1}H\undt$.
Thus Theorem~\ref{th:spectrum-splitting} can be restated as: All
finite $H\undt$-modules split as a wedge of suspensions of $H\undt$, $\Cof(\rho^{k})$, and $\Cof(\tau^r)$ for various $k, r \geq 1$.
\end{remark}

\subsection{Classification of finite $H\undl$-modules}
Here we collect some of the analogous results for odd primes.  In particular, we also obtain a classification of finite $H\undl$-modules for $\ell$ an odd prime.  The splitting theorem for $\cD(\undl)_{\perf}$ and the computation of the Balmer spectrum follow immediately from Proposition~\ref{pr:odd-splitting}.

\begin{prop}\label{pr:odd-spectrum}
Let $\ell$ be an odd prime.  Every perfect complex of $\undl$-modules decomposes as a direct sum of shifts of $0 \lra H \lra 0$, $0 \lra S\tht \lra 0$, and the contractible complexes $\bD(H)$ and $\bD(S\tht)$.
\end{prop}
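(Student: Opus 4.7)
The plan is to leverage the semisimplicity of $\undl\MMod$ from Proposition~\ref{pr:odd-splitting}: every $\undl$-module is projective, every short exact sequence splits, and every finitely-generated module is a direct sum of copies of $H$ and $S\tht$. With this in hand, the classification of perfect complexes reduces to a standard ``split off the homology'' argument from homological algebra, followed by a module-level application of Proposition~\ref{pr:odd-splitting}.

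More precisely, without loss of generality take $C_\bdot$ to be a bounded chain complex of finitely-generated $\undl$-modules. In each degree, set $Z_n=\ker d_n$ and $B_n=\im d_{n+1}$. By semisimplicity, the short exact sequences
\[ 0\lra Z_n\lra C_n\lra B_{n-1}\lra 0 \qquad\text{and}\qquad 0\lra B_n\lra Z_n\lra H_n(C)\lra 0 \]
both split. Choosing splittings gives an internal direct sum decomposition $C_n\iso B_n\oplus H_n(C)\oplus C'_n$ in which $d_n$ kills the first two summands and maps $C'_n$ isomorphically onto $B_{n-1}\subseteq C_{n-1}$. Matching up $C'_n$ with its image in $C_{n-1}$, these choices assemble into a chain isomorphism
\[ C_\bdot \;\iso\; H_\bdot(C)\;\oplus\;\bigoplus_n \bD(B_{n-1}), \]
where $H_\bdot(C)$ is viewed as a complex with zero differentials.

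Each $H_n(C)$ and each $B_{n-1}$ is a finitely-generated $\undl$-module, hence by Proposition~\ref{pr:odd-splitting} splits as $H^a\oplus S\tht^b$ for some nonnegative integers. Since $\bD(M\oplus N)\iso \bD(M)\oplus \bD(N)$ and $\bD$ commutes with taking direct sums, the $\bD(B_{n-1})$ summands break into copies of $\bD(H)$ and $\bD(S\tht)$ (suitably shifted), while the zero-differential complex $H_\bdot(C)$ breaks into shifts of the length-one complexes $0\to H\to 0$ and $0\to S\tht\to 0$. This yields the desired decomposition.

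The only conceptual step that needs care is verifying that the pointwise splittings of $C_n$ truly assemble into a chain-level isomorphism, but this is automatic: the differentials vanish on the $B_n\oplus H_n(C)$ summands and restrict to an isomorphism on the $C'_n$ summand. I do not anticipate a genuine obstacle here, since everything reduces to classical semisimple homological algebra once Proposition~\ref{pr:odd-splitting} is invoked; indeed, once stated, the authors' own remark that the decomposition holds ``as a chain complex, not just an object in the derived category'' is essentially this argument.
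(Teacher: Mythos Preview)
Your proof is correct and is essentially the same as the paper's approach: the paper simply asserts that the result ``follow[s] immediately from Proposition~\ref{pr:odd-splitting}'' without spelling out the standard semisimple-category argument, whereas you have written out the details of splitting off the homology and disk pieces. Nothing more is needed.
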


\begin{cor}\label{co:Balmer-odd}
    Let $\ell$ be an odd prime.  The Balmer spectrum of $\cD(\undl)_{\perf}$ is a single point, the zero ideal.
    \end{cor}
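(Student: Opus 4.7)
The plan is to leverage Proposition~\ref{pr:odd-spectrum} to reduce every perfect complex to a very simple form, and then observe that both building blocks are invertible, which forces the Balmer spectrum to collapse.

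First I would use Proposition~\ref{pr:odd-spectrum} to conclude that, up to quasi-isomorphism, every object of $\cD(\undl)_{\perf}$ is a finite direct sum of shifts of $H$ and $S\tht$ (the contractible disks $\bD(H)$ and $\bD(S\tht)$ vanish in the derived category). Next I would record from Proposition~\ref{pr:odd-stuff} that $S\tht \bbox S\tht \he H$, where $H$ is the monoidal unit. Thus $S\tht$ is invertible in $\cD(\undl)_{\perf}$, and of course $H$ is too. So every nonzero perfect complex is a direct sum of shifts of invertible objects.

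Now let $\cI$ be any prime ideal in $\cD(\undl)_{\perf}$, and suppose $\cI$ contains a nonzero object $X$. By the reduction above, $X$ is quasi-isomorphic to a nonempty direct sum of shifts of $H$ and $S\tht$; since thick subcategories are closed under retracts and (de)suspensions, $\cI$ contains either $H$ or $S\tht$. In either case, using $H \bbox M \he M$ or $S\tht \bbox S\tht \he H$ together with the tensor ideal property, $\cI$ contains $H$, hence everything, contradicting properness. Therefore the only prime ideal is the zero ideal $\{0\}$.

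Finally, I would verify that $\{0\}$ is genuinely prime: if $X, Y$ are nonzero perfect complexes, each contains a summand that is a shift of an invertible object, so $X \dbox Y$ contains a summand that is a shift of an invertible object, hence is nonzero. This confirms $\{0\}$ is a prime ideal, and we conclude $\Spec \cD(\undl)_{\perf}$ is a single point. I do not anticipate any substantive obstacle here; the only thing to be careful about is distinguishing the zero object from contractible complexes, which is immediate since we are working in the derived category.
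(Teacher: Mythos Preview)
Your proposal is correct and follows essentially the same approach as the paper: the paper's proof simply says the result follows from Proposition~\ref{pr:odd-spectrum} together with the invertibility of $S\tht$ (from Proposition~\ref{pr:odd-stuff}), and you have spelled out the details of why those two facts force the Balmer spectrum to be a single point.
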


\begin{proof}
This follows immediately from Proposition~\ref{pr:odd-spectrum} once one knows that $S_\Theta$ is invertible,
which is true by Proposition~\ref{pr:odd-stuff}.  
\end{proof}

We also get a splitting at the spectrum level.

\begin{prop}\label{pr:spectrum-splitting-odd}
Let $\ell$ be an odd prime and $M$ be a finite $H\undl$-module.  Then
up to weak equivalence $M$ splits as a wedge of bigraded suspensions of $H\undl$.
\end{prop}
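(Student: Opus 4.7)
The plan is to reduce the statement to the algebraic splitting of Proposition~\ref{pr:odd-spectrum} via the Quillen equivalence \eqref{eq:QE}, and then to identify the extra summand $S\tht$ as a bigraded suspension of $H\undl$. Since $M$ is finite, $\Psi(M)$ is a perfect complex of $\undl$-modules; applying Proposition~\ref{pr:odd-spectrum} decomposes $\Psi(M)$ (up to quasi-isomorphism) into a finite direct sum of integer shifts of $H$ and $S\tht$. The shifts $\Sigma^p H$ are already in the image of bigraded suspensions of $H\undl$, namely $\Psi(\Sigma^{p,0} H\undl)$.

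To handle the $S\tht$ summands, I would identify $S\tht$ with the Quillen image of a specific (virtual) representation sphere smashed with $H\undl$. The natural choice is the sign representation sphere $S^\sigma$. Its reduced cellular chain complex over $\undl$ takes the form $F \to H$ (in degrees $1$ and $0$), with the differential being the essentially unique nonzero map. Because $\ell$ is odd, Proposition~\ref{pr:odd-splitting} gives $F \iso H \oplus S\tht$, and this differential becomes the projection onto the $H$-summand, so the complex is quasi-isomorphic to $\Sigma S\tht$. Hence $\Psi^{-1}(S\tht) \he H\undl \Smash \Sigma^{-1} S^\sigma$, a bigraded suspension of $H\undl$.

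Putting these together, each algebraic summand of $\Psi(M)$ is the Quillen image of a bigraded suspension of $H\undl$, and since $\Psi^{-1}$ carries direct sums in $\cD(\undl)$ to wedges of $H\undl$-modules, we conclude that $M$ splits as a wedge of bigraded suspensions of $H\undl$. The one nontrivial step is the cellular-chain identification of $S^\sigma$: identifying $F$ and $H$ with the Mackey-functor contributions of free and fixed cells, and pinning down the boundary map, has to be checked directly from the $C_2$-CW structure of $S^\sigma$. Alternatively, one can bypass the Quillen equivalence entirely by invoking the fact mentioned in the introduction that the $RO(C_2)$-graded homotopy of $H\undl$ is a graded field for $\ell$ odd, so that the bigraded homotopy of $M$ is a free graded module whose basis gives a decomposition of $M$ directly on the topological side.
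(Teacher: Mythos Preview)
Your proof is correct and follows essentially the same route as the paper: you reduce to Proposition~\ref{pr:odd-spectrum} via the Quillen equivalence, identify $S\tht$ with a bigraded suspension of $H\undl$ by recognizing $\Sigma S\tht$ as the cellular complex $F\to H$ of the sign sphere $S^\sigma=S^{1,1}$, and you even mention the same alternative graded-field argument. The only cosmetic difference is notation ($\Sigma^{-1}S^\sigma$ versus $S^{0,1}$).
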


\begin{proof}
This follows from Proposition~\ref{pr:odd-spectrum} and the
observation that $0 \ra S\tht \ra 0$ corresponds under the Quillen
equivalence to $\Sigma^{0,1} H\undl$.  To see the latter claim, note that $\Sigma
S\tht$ is quasi-isomorphic to the complex $F\ra H$ (concentrated in
degrees $0$ and $1$), which is $H\undl\Smash S^{1,1}$ under the
Quillen equivalence.  

Alternatively, one may compute
the bigraded cohomology on the $\cpdot$ side of $H\undl$. 
The result is a graded field $\Z/\ell[x,x^{-1}]$ with $x$ in degree $(0,2)$.
\end{proof}

The computation of the Balmer spectrum in this context is then immediate, as every nonzero $H\undl$-module is invertible.

\begin{cor}\label{co:Balmer-odd-1}
Let $\ell$ be an odd prime.  The Balmer spectrum of compact objects in $H\undl\MMod$ is a single point, the zero ideal.
\end{cor}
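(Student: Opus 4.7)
The plan is to run the topological analogue of the proof of Corollary~\ref{co:Balmer-odd}, using Proposition~\ref{pr:spectrum-splitting-odd} in place of Proposition~\ref{pr:odd-spectrum}. The guiding idea is that once every compact object splits as a wedge of suspensions of the unit $H\undl$, every nonzero compact object has an invertible suspension of the unit as a retract, so no proper thick tensor ideal can contain any nonzero object.

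First I would identify the compact objects of $H\undl\MMod$ with the finite $H\undl$-modules: this is standard, since $H\undl$ and $H\undl\Smash (C_2)_+$ are compact generators and the compact objects are closed under finite colimits and retracts. Combining this with Proposition~\ref{pr:spectrum-splitting-odd}, every compact $X$ is weakly equivalent to a finite wedge $\bigvee_i \Sigma^{p_i,q_i}H\undl$.

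Next, I would verify that the zero ideal is prime. Writing compact objects in the above form as $X \simeq \bigvee_i \Sigma^{p_i,q_i}H\undl$ and $Y \simeq \bigvee_j \Sigma^{r_j,s_j}H\undl$, the smash product over $H\undl$ satisfies
\[ X \Smash_{H\undl} Y \;\simeq\; \bigvee_{i,j}\Sigma^{p_i+r_j,\,q_i+s_j}H\undl, \]
which vanishes only when one of the indexing sets is empty, i.e.\ when $X\simeq 0$ or $Y\simeq 0$. Finally, I would exclude any other prime by showing the zero ideal is the unique proper thick tensor ideal: if such an ideal $\cI$ contained a nonzero compact $X$, the wedge splitting would exhibit some $\Sigma^{p,q}H\undl$ as a retract of $X$, hence in $\cI$; since $\Sigma^{p,q}H\undl$ is invertible, smashing with its inverse forces the unit $H\undl$ into $\cI$, contradicting properness.

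The only mildly delicate step is the first one, which tacitly uses the topological splitting $H\undl\Smash (C_2)_+ \simeq H\undl \vee \Sigma^{0,1}H\undl$ — the shadow under the Quillen equivalence of $F\iso H\oplus S\tht$ from Proposition~\ref{pr:odd-splitting}. Once this is in place, both primality of the zero ideal and the absence of other primes are one-line computations, so the overall argument is essentially the topological mirror of the algebraic corollary.
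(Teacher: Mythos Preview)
Your proposal is correct and follows essentially the same approach as the paper: both argue that every nonzero compact object has an invertible bigraded suspension of the unit as a retract, so the zero ideal is the unique prime. The paper's proof is a single line (``every nonzero $H\undl$-module is invertible''), which is slightly imprecise since a wedge of two copies of $H\undl$ is not invertible; your version fills in exactly the details that make this work, namely passing to an invertible retract via the splitting of Proposition~\ref{pr:spectrum-splitting-odd}.
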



\section{Proof of the classification theorem for chain complexes}
\label{se:complex-classify-proof}

In this section we will prove Theorem~\ref{th:complex-splitting}.  That is, we will
show that any perfect complex in $Ch(\undt)$ is isomorphic to a direct
sum of ``strands''.  Recall from Section~\ref{se:complexes}
that a complete lists of strands are the
fundamental complexes $A_k$, $B_r$, and
$H(n)$, for various values of $k,r \geq 0$ and $n \in \Z$, together
with the contractible disk complexes $\bD(H)$ and $\bD(F)$.

Our proof of the splitting decomposition involves
changing the basis at various levels in the complex in an algorithmic
fashion.  We work our way from the bottom of the complex up,
but at each stage the algorithm involves possibly changing the basis at some
or all of the lower levels.   So although the proof is constructive, the algorithm can be time-intensive to implement in practice (at least by hand), and there is no simple way in general to look at a complex and know what strands will come out at the end. This differs from Proposition~\ref{pr:Hmod-classify}, for example, where the algorithm in that proof yielded formulas for the number of summands based on properties of the given $H$-module.

\subsection{Proof outline} Our approach is roughly to proceed by induction on the length of the complex.  Suppose the complex is nonzero only in degrees $0$ through $m$.  For the inductive step, we assume the portion of the complex in degrees $0$ to $m-1$ has been split into strands. We consider summands of $H$ and $F$ in the top degree $m$ mapping via the differential to the strands below.  Then, through a series of steps, we split off all subcomplexes having top degree $m$.

We begin by considering any isomorphisms at the top and use these to split off copies of the contractible complexes $\bD(H)$ and $\bD(F)$.  Next we consider the case where copies of $H$ in degree $m$ map nontrivially to various strands below. We use these maps to split off copies of $B_r$. Then we split off any summands of $H(-n)$ for $n > 0$. Turning to the case where we have copies of $F$ in degree $m$ mapping to strands below, we split off any summands of the form $H(n)$. Finally, we split off summands of $A_k$. All other summands of $H$ and $F$ in degree $m$ support trivial maps and thus split off.  Once we have dealt with all the terms in degree $m$ we will be done, since by induction the rest of the complex in lower degrees was already split.

At each stage of the proof, we choose a particular type of strand mapped to by a particular copy of either $H$ or $F$ in degree $m$.  We change the basis of the complex at level $m$ so no other summands in degree $m$ hit the chosen strand.  Then we change the basis of each term in the chosen strand to form a new strand that splits off from the rest. The general rule for the choice of strand at each stage is this: if we are splitting off a type of strand that ends in an $H$ then we choose the {\it shortest\/}
strand of that type, whereas if we are splitting off a type of strand
that ends in an $F$ then we choose the {\it longest\/} strand of that
type.  As the reader will see, these choices guarantee that the evident change
of basis does what we need in order to split off.\medskip

Recall the notion of a basis for a
free $\undt$-module from Definition~\ref{de:basis}.
The following lemma states that certain adjustments (analogous to the
usual column operations of linear algebra)
give a change of basis.  Notice we can use $p^*$ and $p_*$ to mix basis elements from the $\cpdot$ and $\Theta$ sides.

\begin{lemma} \label{le:basis} Let $M$ be a free $\undt$-module  with
$\gamma=\{b_1^{\Theta},\dots,b_m^{\Theta}, b_{m+1}^{\cpdot},\dots, b_{m+n}^{\cpdot}\}$
a chosen basis $M$.  For a fixed choice of $i$ and $j$ with $i\neq j$, each of the following modifications to $\gamma$ yields a new basis:
\begin{enumerate}[(i)]
\item
Replace $b_i^{\Theta}$ with $\tilde{b}_i^{\Theta}=
b_i^{\Theta}+ b_j^{\Theta}$.
\item
Replace $b_i^{\Theta}$ with $\tilde{b}_i^{\Theta}= b_i^\Theta +
t b_{j}^\Theta$.
\item
Replace $b_i^{\Theta}$ with $\tilde{b}_i^{\Theta}= b_i^\Theta +
(1+t)b_{j}^\Theta$.
\item
Replace $b_i^{\Theta}$ with $\tilde{b}_i^{\Theta}=
b_i^{\Theta}+ p^*b_j^{\cpdot}$.
\item
Replace $b_i^{\cpdot}$ with $\tilde{b}_i^{\cpdot}=
b_i^{\cpdot}+ b_j^{\cpdot}$.
 \item
Replace $b_i^{\cpdot}$ with $\tilde{b}_i^{\cpdot}=b_i^{\cpdot}+p_*b_j^{\Theta}$.
\end{enumerate}
\end{lemma}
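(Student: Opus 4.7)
The plan is to reframe each of the six modifications as a $\undt$-module automorphism of $M$ that carries the old basis to the proposed new one, and then to observe that this automorphism is its own inverse. Concretely, having the basis $\gamma$ is the same as specifying an isomorphism
\[\Phi\colon\Bigl(\bigoplus_{i=1}^m F^\Theta_\undt\Bigr)\oplus\Bigl(\bigoplus_{j=m+1}^{m+n} F^\cpdot_\undt\Bigr)\mapiso M\]
under which the canonical generators map to the $b_i^\Theta$ and $b_j^\cpdot$. Writing $\Phi'$ for the analogous map built from the proposed new collection $\gamma'$, the collection $\gamma'$ is a basis if and only if the endomorphism $\phi:=\Phi^{-1}\circ\Phi'$ of the source free module is an automorphism.

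First I would check that $\phi$ exists as a well-defined $\undt$-module map. This is immediate from the adjunctions $\Hom_\undt(F^\Theta_\undt,M)\iso M^\Theta$ and $\Hom_\undt(F^\cpdot_\undt,M)\iso M^\cpdot$ recalled in Section~\ref{se:background}: any element on the correct side of $M$ extends uniquely to a map out of the relevant free summand. In each of (i)--(vi), $\phi$ is defined to be the identity on every summand except the single one being modified, on which it sends the canonical generator to the element displayed in the lemma. A quick inspection shows the displayed element lies on the correct side of $M$ in each case; for instance $p^*b_j^\cpdot\in M^\Theta$ in (iv) and $p_*b_j^\Theta\in M^\cpdot$ in (vi).

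The key step --- and really the only one with any content --- is to show that $\phi$ is invertible. I would argue that it is its own inverse by checking directly that each of the six modifications is an involution on bases. This works because $M$ is a $\undt$-module, so by Proposition~\ref{pr:constant} both $M^\Theta$ and $M^\cpdot$ are $\Z/2$-vector spaces. Hence doubling any element gives zero, and applying any of (i)--(vi) a second time returns the original basis element: e.g.\ in (iv), $b_i^\Theta\mapsto b_i^\Theta+p^*b_j^\cpdot\mapsto b_i^\Theta+2p^*b_j^\cpdot=b_i^\Theta$, and the other five cases are formally identical. Since $\phi$ fixes every unmodified summand, this involution at the level of basis elements promotes by linearity to $\phi\circ\phi=\id$ on the full free module, so $\phi$ is an automorphism and $\gamma'$ is a basis. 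There is no real obstacle here; the only subtlety is to keep track of which side of $M$ each added term lies on, which is a case-by-case bookkeeping matter.
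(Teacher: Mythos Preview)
Your proposal is correct and is essentially a fleshed-out version of the paper's proof, which is simply ``By inspection.'' Your observation that each modification is an involution because $M^\Theta$ and $M^\cpdot$ are $\Z/2$-vector spaces is exactly the content the paper leaves implicit.
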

\begin{proof}
By inspection.
\end{proof}

In the following proof we draw chain complexes vertically, with basis elements appearing as subscripts (so that $F_{a}$ is a copy of $F$ with basis $a^{\Theta}$ for example). To simplify notation, we omit the superscripts $\Theta$ and $\cpdot$ when it
is clear from context where each basis element lives. Our
convention is to only draw arrows for nonzero maps. We also omit
labels of maps whenever there is only one possible nonzero map, such as $H_a \to H_b$. For
convenience, we denote the map $1+t\colon F\to F$ by $u$. This map appears frequently in chain complexes and so whenever we omit the label of a map $F \to F$, we mean that it is $u$.

An example of a complex with this notation is
given in Figure \ref{fig:chainex} on the right.  On the left in Figure \ref{fig:chainex}, the
same complex is drawn using matrix notation (where
matrices act on the left). In the main
proof, we will refer to individual arrows.  For
example, in Figure \ref{fig:chainex} there is a map $u\colon F_{a_1}\to F_{b_1}$ from the top level to the level below.

\begin{figure}[ht]
\begin{tikzpicture}[scale=1.3, >=angle 90]
\node (1) at (0,-1) {$F\oplus H \oplus H \oplus F_{\phantom{-}}$};
\node (2) at (0,-2) {$F\oplus F \oplus H \oplus F_{\phantom{-}}$};
\node (3) at (0,-3) {$H \oplus F \oplus F_{\phantom{-}}$};
\path[->,font=\scriptsize, >=angle 90]
(1) edge node[left]{$\begin{bsmallmatrix} u & p& 0 & 0\\ 0 & 0 & p & 0
\\ 0 & 0 & 1 & p \\ 0 & 0 & 0 & u \end{bsmallmatrix}$} (2)
(2) edge node[left]{$\begin{bsmallmatrix} p & p & 0 & 0\\ 0 & u & 0 & 0
\\ 0 & t & p & t \end{bsmallmatrix}$} (3) ;
\end{tikzpicture}\hspace{2cm}
\begin{tikzpicture}[scale=1.3, >=angle 90]
\node (11) at (0,-1) {$F_{a_1}$};
\node (12) at (1,-1) {$H_{a_2}$};
\node (13) at (2,-1) {$H_{a_3}$};
\node (14) at (3,-1) {$F_{a_4}$};
\node (21) at (0,-2) {$F_{b_1}$};
\node (22) at (1,-2) {$F_{b_2}$};
\node (23) at (2,-2) {$H_{b_3}$};
\node (24) at (3,-2) {$F_{b_4}$};
\node (31) at (0,-3) {$H_{c_1}$};
\node (32) at (1,-3) {$F_{c_2}$};
\node (33) at (2,-3) {$F_{c_3}$};
\path[->,font=\scriptsize, >=angle 90]
(11) edge node[left]{$u$} (21)
(12) edge (21)
(13) edge (23)
(13) edge (22)
(14) edge (23)
(14) edge node[right]{$u$} (24)
(22) edge node[right]{$t$} (33)
(22) edge node[right]{$u$} (32)
(23) edge (33)
(24) edge node[right]{$t$} (33)
(22) edge (31)
(21) edge (31)
;
\end{tikzpicture}
\caption{An example using matrix notation and using basis labels.}
\label{fig:chainex}
\end{figure}

We now give the proof of our main splitting theorem for complexes.

\begin{proof}[Proof of Theorem~\ref{th:complex-splitting}]
Let $C$ be a bounded complex that has a finite direct
sum of copies of $H$ and $F$ in each degree.  We aim to show that $C$ is isomorphic to a direct sum
of shifts of copies of $A_k$, $B_r$, $H(n)$, for various values of $k,r \geq 0$ and $n \in \Z$, and the contractible
complexes $\bD(H)$ and $\bD(F)$.

If $C=0$ we are done, so assume $C\neq 0$. By shifting as needed,
we can assume $C$ is concentrated in degrees $0$ through $d$ with $C_0 \neq 0$.
Let $T_i(C)$ be the truncated complex given by $(T_i(C))_j=C_j$ if $j\leq i$
and $(T_i(C))_j=0$ if $j>i$.  Observe that any choice of basis for
$T_0(C)$ is trivially a decomposition into strands (just direct sums
of $H$ and $A_0=F$).  Assume for $m>1$ that there exists a basis for
$T_{m-1}(C)$ decomposing it as a direct sum of shifts of fundamental
complexes and contractible complexes.  For the inductive step, we will
find a basis for $T_m(C)$ decomposing it into such a direct sum.  In
the process, we will possibly change the basis at level $m$ as well as (potentially all) levels below.

Fix a basis for $T_{m-1}(C)$ so that the truncated complex is a direct
sum of fundamental complexes together with contractible ones.  Choose any basis for $C_m$.  One can visualize $T_{m-1}(C)$ written in strands
with the basis elements in $C_m$ mapping to some combination of those
strands (see the left side of Figure~\ref{fig:step1a} for an example).
We provide an algorithm to adjust the bases at each level so that
$T_{m}(C)$ is written as a direct sum of fundamental complexes and contractible ones. In each step, we explain what to do in general and give an example.  To distinguish these, we use Greek letters for the basis elements in the general case and Roman letters for the examples.\medskip

\noindent \emph{Step 1: Split off disks.} In this step, we will split off disks in $C$ with nonzero terms in degrees $m$ and $m-1$.  Suppose an isomorphism of summands appears in the
map from $C_m$ to $C_{m-1}$. See for example the left side of Figure \ref{fig:step1a} (which has three such
isomorphisms, all from $H$ to $H$). For concreteness, assume the
isomorphism is $\id\colon H\to H$. We will demonstrate how to change bases to split off a shifted copy of $\bD(H)$.  The other cases of isomorphisms $\id,t\colon F\to F$
can be handled similarly and are addressed at the end of this step.  If there are no isomorphisms of summands between degree $m$ and degree $m-1$, proceed to Step 2.

Fix one of the identity maps
$H_\alpha\to H_\beta$ between levels $m$ and $m-1$. We first
adjust the basis at level $m$ so that no other summand in degree $m$ maps to $H_\beta$.  This change of basis proceeds as follows. Any other basis element $\alpha'$ that
maps nontrivially to the submodule generated by $\beta$ is either on the $\cpdot$ side or the $\Theta$ side.  Replace $\alpha'$
with $\alpha'+\alpha$ if $\alpha'$ is on the $\cpdot$ side, and with
$\alpha'+ p^*\alpha$ if $\alpha'$ is on the $\Theta$ side.  The result is again a basis (see parts (v) and (iv) of Lemma \ref{le:basis}).
After changing all such $\alpha'$ in this way, no basis element other than $\alpha$ will map nontrivially to the
submodule generated by $\beta$.

\begin{figure}[ht]
\begin{tikzpicture}[>=angle 90, descr/.style={fill=white}]
\node (11) at (0,-1.5) {$F_{a_1}$};
\node (12) at (1,-1.5) {$H_{a_2}$};
\node (13) at (2.5,-1.5) {$H_{a_3}$};
\node (14) at (4,-1.5) {$F_{a_4}$};
\node (21) at (0,-3) {$F_{b_1}$};
\node (22) at (1,-3) {$F_{b_2}$};
\node (23) at (2,-3) {$H_{b_3}$};
\node (24) at (3,-3) {$H_{b_4}$};
\node (25) at (4,-3) {$F_{b_5}$};
\node (31) at (0,-4) {$\vdots$};
\node (32) at (1,-4) {$\vdots$};
\node (35) at (4,-4) {$\vdots$};
\path[->,font=\scriptsize, >=angle 90]
(11) edge node[descr]{$u$} (21)
(11) edge node[descr]{$u$} (22)
(13) edge (22)
(13) edge  (24)
(13) edge[thick] (23)
(12) edge (23)
(14) edge (23)
(14) edge (24)
(14) edge node[descr]{$u$} (25)
(21) edge (31)
(22) edge (32)
(25) edge (35)
;
\end{tikzpicture}\hspace{.5 in}
\begin{tikzpicture}[>=angle 90, descr/.style={fill=white}]
\node (11) at (-.5,-1.5) {$F_{a_1}$};
\node (12) at (.7,-1.5) {$H_{\color{blue}{a_2+a_3}}$};
\node (13) at (2.2,-1.5) {$H_{a_3}$};
\node (14) at (4,-1.5) {$F_{\color{blue}{a_4+p^*a_3}}$};
\node (21) at (0,-3) {$F_{b_1}$};
\node (22) at (1,-3) {$F_{b_2}$};
\node (23) at (2.1,-3) {$H_{b_3}$};
\node (24) at (3,-3) {$H_{b_4}$};
\node (25) at (4,-3) {$F_{b_5}$};
\node (31) at (0,-4) {$\vdots$};
\node (32) at (1,-4) {$\vdots$};
\node (35) at (4,-4) {$\vdots$};
\path[->,font=\scriptsize, >=angle 90]
(11) edge node[descr]{$u$} (21)
(11) edge node[descr]{$u$} (22)
(14) edge node[descr, xshift=5mm, yshift=2.5mm]{$u$} (22)
(14) edge node[descr]{$u$} (25)
(12) edge (22)
(12) edge (24)
(13) edge (22)
(13) edge[thick] (23)
(13) edge (24)
(21) edge (31)
(22) edge (32)
(25) edge (35)
;
\end{tikzpicture}
\caption{An example of Step 1.}
\label{fig:step1a}
\end{figure}

For example, in Figure \ref{fig:step1a} we
choose the identity map $H_{a_3}{\to} H_{b_3}$ sending the element $a_3$ to $b_3$.  As there is also a nonzero map $H_{a_2} \to H_{b_3}$, the basis element $a_2$ at the top is replaced with $a_2+a_3$.  Similarly, the element $a_4$ is replaced with $a_4+p^*a_3$. These new basis elements are depicted on the right of Figure \ref{fig:step1a} in blue.
Notice after the change of basis $H_{a_2+a_3}$ maps to the sum of two strands, as does $F_{a_4+p^*a_3}$, but neither maps to $H_{b_3}$ since we are working over $\Z/2$.  At this point, no other summands at level $m$ map nontrivially to $H_{b_3}$.  However, $a_3$ maps nontrivially to the summands generated by $b_2$ and $b_4$, so we are not yet able to split off a copy of $\bD(H)$.  Thus we consider the basis below degree $m$.

In general (as in the example), it is possible $\alpha$ maps nontrivially to some other summand.  In this situation, we change the basis at level $m-1$, adjusting the target basis element $\beta$ so that this is no longer the case.  Suppose $\beta'$ is another basis element at level $m-1$ such that $\alpha$ maps nontrivially to the summand generated by $\beta'$. Replace $\beta$ with $\beta+\beta'$ if $\beta'$ is on the $\cpdot$ side.
Replace $\beta$ with $\beta+p_*\beta'$ if $\beta'$ is on the $\Theta$ side. Repeat this process until $\alpha$ only maps to a single summand, and call this new basis element $\tilde{\beta}$.  We can now split off the complex $H_\alpha \to H_{\tilde{\beta}}$.

In our example, this change is shown in Figure \ref{fig:step1b}.  We first replace $b_3$ with $b_3+ p_* b_2$, and then replace that with $\tilde{\beta} = b_3+p_* b_2+b_4$.  Now $H_{a_3} \to H_{b_3+p_* b_2+b_4}$ splits off.  Observe that we have effectively chosen the diagonal basis element hit by $a_3$.  That is, we replace $b_3$ with the sum of the elements that are hit by $a_3$ (using $p_*$ as necessary, since all these elements must lie on the $\cpdot$ side).

Note the change of basis at level $m-1$ has not destroyed the
decomposition into strands in lower degrees.  The fact that $H_\alpha\ra H_\beta$ was
the identity and the criteria that $d^2=0$ in the chain complex, means the original strand
involving $H_{\beta}$ could not have had any lower degree terms.  The same is true of $H_{\tilde{\beta}}$.  Thus we have indeed split off a copy of $\bD(H)$.

\begin{figure}[ht]
\begin{tikzpicture}[>=angle 90, descr/.style={fill=white}]
\node (11) at (-.5,-1.5) {$F_{a_1}$};
\node (12) at (.7,-1.5) {$H_{a_2+a_3}$};
\node (13) at (2,-1.5) {$H_{a_3}$};
\node (14) at (4,-1.5) {$F_{a_4+p^*a_3}$};
\node (21) at (-.5,-3) {$F_{b_1}$};
\node (22) at (.7,-3) {$F_{b_2}$};
\node (23) at (2.4,-3) {$H_{\color{blue}{b_3+p_*b_2+b_4}}$};
\node (24) at (4,-3) {$H_{b_4}$};
\node (25) at (5,-3) {$F_{b_5}$};
\node (31) at (-.5,-4) {$\vdots$};
\node (32) at (.7,-4) {$\vdots$};
\node (35) at (5,-4) {$\vdots$};
\path[->,font=\scriptsize, >=angle 90]
(11) edge node[descr]{$u$} (21)
(11) edge node[descr]{$u$} (22)
(14) edge node[descr, xshift=5mm, yshift=2.5mm]{$u$} (22)
(13) edge[thick] (23)
(14) edge (24)
(14) edge node[descr]{$u$} (25)
(21) edge (31)
(22) edge (32)
(25) edge (35)
(12) edge (22)
(12) edge (24)
;
\end{tikzpicture}
\caption{The example for Step 1, continued.}
\label{fig:step1b}
\end{figure}

If initially we had instead chosen the identity map $F_{\alpha} \ra F_{\beta}$ the process to split off a copy of $\bD(F)$ would be analogous, except we would replace basis elements $\alpha'$ in level $m$ with either $\alpha'+\alpha$, $\alpha'+t\alpha$, $\alpha' + u\alpha$, or $\alpha' +p_*\alpha$. At level $m-1$, we similarly would add $\beta'$, $t\beta'$, $u\beta'$, or $p^*\beta'$ to $\beta$.
If we had instead chosen $t\colon F_{\alpha} \to F_{\beta}$, we could simply replace $\alpha$ with $t\alpha$ and then apply the steps for $id\colon F_{t\alpha}\to F_{\beta}$.

Continue this process until all isomorphisms from level $m$ to level $m-1$ have been split off as disks. \medskip

Now we turn to decomposing the remaining complex where these top-level disks have been split off. We abuse notation and again
refer to this complex as $C$.  Notice $C$ may still have contractible complexes as summands in lower degrees, but these can essentially be ignored.

Having completed Step 1, we may assume $C$ has no isomorphisms from degree $m$ to degree $m-1$.  We consider nonzero maps out of a copy of $H$ in degree $m$, and since there are no maps of the form $H \to H$, we suppose there is a map of the form $p\colon H \ra F$.  This type of map appears at the top of both $B_r$ strands and $H(-n)$ strands, for $n >0$.  In general, to split off a strand ending in $H$ we choose the shortest strand, and to split off a strand ending in $F$ we choose the longest.  In the next two steps, we will split off the $B_r$ summands, shortest strands first, and then the $H(-n)$ summands, longest first.  If $C$ has no maps of the form $p\colon H \ra F$ at this level, proceed to Step 4. \medskip

\noindent\emph{Step 2: Split off $B_r$ summands.} Assume there exists a map $p\colon H_{\alpha} \to F_{\beta}$ from
level $m$ to level $m-1$, and further assume that the strand in
$T_{m-1}(C)$ with $F_{\beta}$ at the top is isomorphic to a shift of
$H(i)$ for some $i>0$.  If there is no such strand, proceed to Step 3.  If there is such a strand, amongst all such $\alpha, \beta$ pairs, select one so the complex beginning with $F_{\beta}$ is isomorphic to a shifted copy of $H(i)$ for the smallest possible $i$. That is, choose the shortest strand that ends in an $H$.  An example is shown below on the
left in Figure \ref{fig:step2a} with $\alpha = a_1$ and $\beta=b_3$, where the chosen strand is a shifted copy of $H(2)$.

\begin{figure}[ht]
\begin{tikzpicture}[>=angle 90]
\node (04) at (2.3,0.2) {$F_{a_2}$};
\node (02) at (1,0.2) {$H_{a_1}$};
\node (11) at (0,-1) {$F_{b_1}$};
\node (12) at (1,-1) {$F_{b_2}$};
\node (13) at (2,-1) {$F_{b_3}$};
\node (14) at (3,-1) {$F_{b_4}$};
\node (21) at (0,-2) {$F_{c_1}$};
\node (22) at (1,-2) {$F_{c_2}$};
\node (23) at (2,-2) {$F_{c_3}$};
\node (24) at (3,-2) {$F_{c_4}$};
\node (31) at (0,-3) {$H_{d_1}$};
\node (33) at (2,-3) {$H_{d_3}$};
\node (34) at (3,-3) {$F_{d_4}$};
\node (44) at (3,-4) {$H_{e}$};
\path[->,>=angle 90]
(04) edge (13)
(04) edge (14)
(02) edge (11)
(02) edge (12)
(02) edge[thick] (13)
(02) edge (14)
(11) edge (21)
(21) edge (31)
(12) edge (22)
(13) edge[thick] (23)
(23) edge[thick] (33)
(14) edge (24)
(24) edge (34)
(34) edge (44)
;
\end{tikzpicture}\hspace{0.5 in}
\begin{tikzpicture}[>=angle 90]
\node (04) at (2.3,0.2) {$F_{\color{blue}{a_2+p^*a_1}}$};
\node (02) at (1,0.2) {$H_{a_1}$};
\node (11) at (0,-1) {$F_{b_1}$};
\node (12) at (1,-1) {$F_{b_2}$};
\node (13) at (2,-1) {$F_{b_3}$};
\node (14) at (3,-1) {$F_{b_4}$};
\node (21) at (0,-2) {$F_{c_1}$};
\node (22) at (1,-2) {$F_{c_2}$};
\node (23) at (2,-2) {$F_{c_3}$};
\node (24) at (3,-2) {$F_{c_4}$};
\node (31) at (0,-3) {$H_{d_1}$};
\node (33) at (2,-3) {$H_{d_3}$};
\node (34) at (3,-3) {$F_{d_4}$};
\node (44) at (3,-4) {$H_{e}$};
\path[->,>=angle 90]
(04) edge (11)
(04) edge (12)
(02) edge (11)
(02) edge (12)
(02) edge[thick] (13)
(02) edge (14)
(11) edge (21)
(21) edge (31)
(12) edge (22)
(13) edge[thick] (23)
(23) edge[thick] (33)
(14) edge (24)
(24) edge (34)
(34) edge (44)
;
\end{tikzpicture}
\caption{The example for Step 2.}
\label{fig:step2a}
\end{figure}

Our goal is to change the bases in order to split off a shifted copy
of $B_{r}$ with $r=i-1$. To do so, we first adjust the basis at level $m$ so that $H_{\alpha}$ is the only term hitting our chosen strand. If
$\alpha'$ is some other basis element that maps nontrivially to the
summand generated by $\beta$, replace $\alpha'$ with either
$\alpha'+\alpha$ or $\alpha'+p^*\alpha$. Continue in this way until $H_{\alpha}$ is the only term with a nontrivial map to the chosen strand.  This change is made in the
example on the right in Figure \ref{fig:step2a}.

Next we change the basis of our chosen strand at every level below, starting with level $m-1$.
Recall the element $\beta$
generates a copy of $F$. Replace $\beta$ with the diagonal element, i.e.\ the sum of all basis
elements whose summands are mapped to nontrivially by $H_{\alpha}$. Note that
because there are no longer any isomorphisms from degree $m$ to $m-1$, each of these basis elements
must generate a copy of $F$. Thus all these elements are on the $\Theta$ side and
the sum makes sense (there is no need to involve $p^*$ or $p_*$). Call this new diagonal basis element $\tilde{\beta}$. Observe that $H_{\alpha}$ now maps nontrivally only to $F_{\tilde{\beta}}$. We
 adjust the basis at level $m-2$ in a similar fashion, again
replacing the basis element in the chosen strand with a sum of basis elements. Eventually we will
encounter a summand of the form $H_{\delta}$ at the bottom of the chosen strand. At this level, we
replace the basis element $\delta$ with the diagonal element, but adjust as necessary.  We take the sum of all the relevant copies of basis
elements on the $\cpdot$ side (including $\delta$) and $p_*$ applied to relevant basis elements
on the $\Theta$ side. See Figure \ref{fig:step2b} for clarification.

\begin{figure}[ht]
\begin{tikzpicture}[>=angle 90]
\node (04) at (0,0.2) {$F_{a_2+p^*a_1}$};
\node (02) at (2,0.2) {$H_{a_1}$};
\node (11) at (0,-1) {$F_{b_1}$};
\node (12) at (1,-1) {$F_{b_2}$};
\node (13) at (2.7,-1) {$F_{\color{blue}{b_3+b_1+b_2+b_4}}$};
\node (14) at (4.3,-1) {$F_{b_4}$};
\node (21) at (0,-2) {$F_{c_1}$};
\node (22) at (1,-2) {$F_{c_2}$};
\node (23) at (2.7,-2) {$F_{\color{blue}{c_3+c_1+c_2+c_4}}$};
\node (24) at (4.3,-2) {$F_{c_4}$};
\node (31) at (0,-3) {$H_{d_1}$};
\node (33) at (2.7,-3) {$H_{\color{blue}{d_3+d_1+p_*d_4}}$};
\node (34) at (4.3,-3) {$F_{d_4}$};
\node (44) at (4.3,-4) {$H_{e}$};
\path[->,>=angle 90]
(04) edge (11)
(04) edge (12)
(02) edge[thick] (13)
(11) edge (21)
(21) edge (31)
(12) edge (22)
(13) edge[thick] (23)
(23) edge[thick] (33)
(14) edge (24)
(24) edge (34)
(34) edge (44)
;
\end{tikzpicture}
\caption{The example for Step 2, continued.}
\label{fig:step2b}
\end{figure}

In the example, note that $d_3+d_1+p_*d_4$ maps to zero because
$p\colon F_{d_4}\to H_e$ is zero on the $\cpdot$ side, so we can now split off a
complex of the form $B_1$. The general case works as in the example:
after replacing the basis element $\delta$ with the sum, we can split off a shift of $B_r$ with $r=i-1$.

Continue this process until there are no more maps $p\colon H_{\alpha} \to F_{\beta}$ from
level $m$ to level $m-1$, where $F_{\beta}$ is the top of a shifted copy of $H(i)$ for any $i >0$. Afterward, if there are no more maps $p\colon H \to F$ at this level then skip to Step 4.  Otherwise proceed to Step 3 to split off shifted copies of $H(-n)$. \medskip

\noindent \emph{Step 3: Split off $H(-n)$ summands.} Having completed Steps 1 and 2, and again abusing notation, we may assume that any maps $p\colon H \to F$ from level $m$ to level $m-1$ in $C$ must hit strands made entirely of copies of $F$. So assume there is a map of the
form $p\colon H_\alpha \to F_\beta$ from level $m$ to level $m-1$ where in the truncated complex
$T_{m-1}(C)$ the summand with $F_\beta$ at the top is of the form
$A_i$ for some $i \geq 0$. In this step, choose the pair $\alpha$, $\beta$
such that the summand $A_i$ beginning with $F_\beta$ has the
\emph{largest} possible value of $i$ (in other words, $F_\beta$ begins
the longest possible strand). Our goal is to use this strand to split off a shifted copy of $H(-n)$ with $n=i+1$.

We can adjust the basis at level $m$ as in Step 2. We do not repeat these details and instead begin by assuming there are no other basis elements from level $m$ that map nontrivially to $F_\beta$. We next change the basis at level $m-1$ as in Step 2. That is, replace $\beta$ with the sum of all basis elements whose summands are mapped to nontrivially by $\alpha$. If $i> 0$, make the same adjustment at level $m-2$ and continue making this change until reaching level $m-(i+1)$. An example is provided in Figure \ref{fig:step3} for clarification.

\begin{figure}[ht]
\begin{tikzpicture}[>=angle 90]
\node (02) at (2,0.2) {$H_{a_1}$};
\node (11) at (0,-1) {$F_{b_1}$};
\node (12) at (1,-1) {$F_{b_2}$};
\node (13) at (2,-1) {$F_{b_3}$};
\node (14) at (3,-1) {$F_{b_4}$};
\node (22) at (1,-2) {$F_{c_2}$};
\node (23) at (2,-2) {$F_{c_3}$};
\node (24) at (3,-2) {$F_{c_4}$};
\node (32) at (1,-3) {$F_{d_2}$};
\node (34) at (3,-3) {$F_{d_4}$};
\path[->,>=angle 90]
(02) edge (11)
(02) edge (12)
(02) edge (13)
(02) edge[thick] (14)
(12) edge (22)
(22) edge (32)
(13) edge (23)
(14) edge[thick] (24)
(24) edge[thick] (34)
;
\end{tikzpicture}\hspace{0.5in}
\begin{tikzpicture}[>=angle 90]
\node (02) at (2,0.2) {$H_{a_1}$};
\node (11) at (0,-1) {$F_{b_1}$};
\node (12) at (1,-1) {$F_{b_2}$};
\node (13) at (2,-1) {$F_{b_3}$};
\node (14) at (3.5,-1) {$F_{\color{blue}{b_4+b_1+b_2+b_3}}$};
\node (22) at (1,-2) {$F_{c_2}$};
\node (23) at (2,-2) {$F_{c_3}$};
\node (24) at (3.5,-2) {$F_{\color{blue}{c_4+c_2+c_3}}$};
\node (32) at (1,-3) {$F_{d_2}$};
\node (34) at (3.5,-3) {$F_{\color{blue}{d_4+d_2}}$};
\path[->,>=angle 90]
(02) edge[thick] (14)
(12) edge (22)
(22) edge (32)
(13) edge (23)
(14) edge[thick] (24)
(24) edge[thick] (34)
;
\end{tikzpicture}
\caption{The example for Step 3.}
\label{fig:step3}
\end{figure}

Now split off a complex that is a shift of $H(-(i+1))$. Continue this process until there are no longer any maps of the form $p\colon H \to F$ from degree $m$ to $m-1$. \medskip 

\noindent \emph{Step 4: Split off $H(n)$ summands.} Suppose we have completed Steps 1, 2, and 3. If there are any nonzero maps remaining from level $m$ to level $m-1$, then they must be of the form $u:F\to F$. We now consider basis pairs $\alpha, \beta$ such that $u:F_\alpha \to F_\beta$ appears in $C$ from level $m$ to $m-1$, and the summand beginning with $F_\beta$ in $T_{m-1}(C)$ ends in $H$.
That is, the summand beginning with $F_\beta$ is isomorphic to $H(i)$ for some $i>0$. As in Step 2, choose the pair $\alpha$, $\beta$ such that $F_\beta$ begins a copy of $H(i)$ for the smallest possible $i$. Change bases as in Step 2 to split off a copy of $H(n)$ with $n=i+1$ from $T_m(C)$.

Repeat the above until there are no such $\alpha$, $\beta$ pairs remaining.\medskip

\noindent \emph{Step 5: Split off $A_k$ summands.} Assume we have completed Steps 1 through 4. Again, if there are any nonzero maps remaining from level $m$ to level $m-1$, they must be of the form $u:F_\alpha \to F_\beta$. Having completed Step 4, it must be that the summand in $T_{m-1}(C)$ that begins with $F_\beta$ is of the form $A_i$ for some $i \geq 0$. As in Step 3, choose the $\alpha, \beta$ pair such that $F_\beta$ begins the longest possible $A_i$. Change bases as in Step 3 to split off a copy of $A_{k}$ with $k=i+1$ from $T_m(C)$.

Repeat the above until there are no such $\alpha$, $\beta$ pairs remaining.\medskip

After completing these steps, all remaining maps from level $m$ to level $m-1$
will be zero.  Thus any remaining $H$ or $F$ summands split off as
chain complexes with zero differentials. We have successfully
decomposed $T_{m}(C)$ as a sum of fundamental chain complexes. We can now
repeat the above steps to split $T_{m+1}(C)$, and so on. Since $C$ is
a bounded complex, there is a large enough $d$ so that $T_d(C)=C$.  Thus this process will eventually terminate to give a splitting of $C$.
\end{proof}


\section{An algebraic version of Kronholm's theorem}
\label{se:Kronholm}

Kronholm's freeness theorem, which first appeared in \cite{K}, states the $RO(C_2)$-graded Bredon cohomology with $\undt$-coefficients of any
finite $\Rep(C_2)$-complex is free as a module over the cohomology of a
point $\M_2$. A mistake in the
original proof was fixed in \cite{HM}, which also expanded the result
to Rep$(C_2)$-complexes of finite type. The proof of the freeness theorem involved delicate arguments about extensions of $\M_2$-modules. In this section, we give two alternate proofs.  The first uses $\tau$-localization to quickly deduce freeness as suggested by the referee. The second uses the splitting algorithm from Section~\ref{se:complex-classify-proof} to describe the cohomology explicitly.  The first has the advantage of being short and clear, while the second perspective helps clarify an important phenomenon observed by Kronholm now known as ``Kronholm shifts''.  This is a phenomenon in which the representation cell structure of a $\Rep(C_2)$-complex determines the bidegrees of the free generators in cohomology, up to some shifting of bidegrees.

\medskip

We begin by defining an analog of $\Rep(C_2)$-cells and
$\Rep(C_2)$-complexes in $\Ch(\undt)$. This is somewhat complicated by the fact that complexes in $\Ch(\undt)$ correspond to general $H\undt$-modules, not just those of the form $H\undt$ smashed with a pointed $C_2$-space.  Recall that a $\Rep(G)$-complex is a particular type of $G$-space built by attaching $\Rep(G)$-cells of the form $D(V)$ so that each filtration quotient looks like a wedge of representation spheres $S^V$.  In this section, we will focus on $\Rep(G)$-complexes and not consider $G$-CW complexes, which are built by attaching orbit cells $G/H \times D^m$.

We define a \dfn{representation cell} to be a 
fundamental complex of the form $\Sigma^{m}H(q)$ where $0\leq q \leq
m$. As discussed in Section~\ref{se:top-consequences}, $\Sigma^{m}H(q)$ corresponds to $H\undt \Smash S^{m,q}$.  We require $0 \leq q \leq m$ so that $S^{m,q}$ is an actual (rather than virtual) representation sphere.  Recall $\Sigma^{m}H(q)$ is the complex
\[
\Sigma^m H(q)\colon \qquad\qquad
\xymatrix{
0\ar[r] & F \ar[r]^{u} &F \ar[r]^{u} &\cdots \ar[r]^{u} &F \ar[r]^{p} &H \ar[r]
&0,
}
\]
where the leftmost $F$ is in degree $m$ and the $H$ is in degree $m-q$.  We refer to $m$ as the \dfn{topological dimension}, $q$ as
the \dfn{weight}, and $m-q$ as the \dfn{coweight} of the
representation cell.  For a representation cell $W$ we write
$\tp(W)$, $\wt(W)$, and $\cowt(W)$ for the topological dimension,
weight, and coweight, respectively.

The complex for a representation cell $W$ is zero in degrees above $\tp(W)$ and
in degrees below $\cowt(W)$.  Colloquially, $\tp(W)$ is the degree of
the ``top $F$'' and $\cowt(W)$ is the degree of the ``bottom $H$''.  The weight $\wt(W)$ is the ``length'' of
the fundamental complex, where length is one less than the number
of nonzero terms.  The weight is also the number of $F$'s in the strand.  It is useful to keep these interpretations of the
three invariants in mind while reading the arguments in this section.

We next want to define $\Rep(C_2)$-chain complexes so that under the
equivalence \ref{eq:QE} we have
\[
\Rep(C_2)\text{-chain complexes} \longleftrightarrow H\undt \Smash \Rep(C_2)\text{-complexes}.  
\]
If a space $K$ is formed by attaching an $(m,q)$-cell $D(\R^{m,q})$ to the
space $L$, then we have a cofiber sequence $L\to K \to
S^{m,q}$. Desuspending in the homotopy category then gives $K\simeq
\Cof(S^{m-1,q}\to L)$.  Thus we are led to define \mdfn{attaching an
  $(m,q)$-representation cell} to a chain complex of
$\undt$-modules $Y$ to be taking the cofiber of a map $\Sigma^{m-1}H(q)\to
Y$ in $\Ch(\undt)$.  We will show that (under well-controlled circumstances) the cofiber splits as a direct sum of representation cells, and thus the bigraded cohomology is a free $\M_2$-module by Proposition~\ref{pr:cohom-rings}.

The following example illustrates the freeness theorem and the shifting phenomenon in the context of chain complexes.  The reader is invited to compare it with the extension of $\M_2$-modules from Example 3.2 in \cite{HM}.  In the chain complex setting, a change of basis immediately solves the extension problem.

\begin{example}
The projective space $\R P^2_{tw} = \mathbb{P}(\R^{3,1})$ sits in a
cofiber sequence of the form $S^{1,0} \to \R P^2_{tw} \to S^{2,2}$.
Desuspending in the stable homotopy category, we may view $\R P^2_{tw}$ as $\Cof(S^{1,2} \to S^{1,0})$. Smashing with $H\undt$ and translating to chain complexes, we consider the corresponding complex $X = \Cof(\Sigma^{1}H(2) \to \Sigma^{1}H)$. 
The map $f \colon \Sigma^{1}H(2) \to \Sigma^{1}H$ is pictured on the
left in Figure~\ref{fig:rp_free}, where complexes are drawn
vertically.  If $f$ is not null-homotopic (as turns out to be the case here), it must be given by $p\colon F \to H$ in degree $1$. 
The cofiber $X$ is then pictured in the middle of
Figure~\ref{fig:rp_free}.  Applying the change of basis algorithm from
Section~\ref{se:complex-classify-proof},  Step 4 of the algorithm
splits the complex into the two summands as pictured on the right.
Thus we find $X \simeq \Sigma^{1}H(1) \oplus \Sigma^{2}H(1)$ (this
can also be deduced from Lemma~\ref{le:cof-theta} by identifying
$f\Smash \id_{H(-2)}$ with the map $\Sigma \theta$).  

\begin{figure}[ht]
\begin{tikzcd}[column sep=scriptsize, row sep=scriptsize]
 \phantom{F} &  \\
F \arrow{d}[swap]{u} \arrow{r}{f} & H \\
F \arrow{d}[swap]{p} &\\
H & 
\end{tikzcd} \hspace{2cm}
\begin{tikzcd}[column sep=tiny, row sep=scriptsize]
  F \arrow{d}[swap]{u} \arrow{dr}{p} & \\
  F \arrow{d}[swap]{p} & H\\
  H & \\
  \phantom{H}
  \end{tikzcd}
  \hspace{2cm}
  \begin{tikzcd}[column sep=tiny, row sep=scriptsize]
     & F \arrow{d}{p} \\
    F \arrow{d}[swap]{p} & H\\
    H & \\
    \phantom{H} &
    \end{tikzcd}
    \caption{Freeness theorem and shifts for $H\undt \Smash \R P^2_{tw}$.}
    \label{fig:rp_free}
\end{figure}

Notice the splitting algorithm decomposes $X$ as a direct sum of representation cells $\Sigma^{1}H(1)$ and $\Sigma^2H(1)$.  So the (reduced) bigraded cohomology of $X$ is a free $\M_2$-module generated in bidegrees $(1,1)$ and $(2,1)$.  Translating to topology we have
\[
H\undt \Smash \R P^2_{tw} \simeq H\undt \Smash (S^{1,1} \Wedge S^{2,1}).
\]

Moreover, back in chain complexes, the splitting algorithm has effectively transferred a copy of $F$ onto the second chain complex.  This is precisely how the shifting phenomenon observed by Kronholm manifests in chain complexes.  The original spheres $S^{1,0}$ and $S^{2,2}$ give rise to free $\M_2$ generators in the same topological dimensions but with shifted weights: the first generator shifts up one in weight and the second generator shifts down one.  These weight shifts occur because a single $F$ ``moved'' to the second complex during the change of basis.  Notice that no copies of $F$ appeared or disappeared, so the total weight is preserved.
\end{example}

As in the previous example, more general Kronholm shifts will be determined by copies of $F$ moving onto different strands and the total weight will be preserved. \medskip

Before proceeding to the proof in the general case, we need one more restriction on the complexes.  Without this restriction, it is easy to construct a chain complex out of representation cells that cannot correspond to a space and hence not a $\Rep(C_2)$-complex.

\begin{example}
 Consider the nontrivial map $f\colon \Sigma^1 H \to \Sigma^1 H(1)$.  We can compute $\Cof(\Sigma^1 H \to \Sigma^1 H(1)) \simeq \Sigma^2 B_0$ as depicted in Figure~\ref{fig:tau_not_free}.  However, by the structure theorem (Theorem~\ref{th:structure-thm}), the bigraded cohomology of $\Sigma^2B_0$ cannot be the bigraded cohomology of a $C_2$-space.

\begin{figure}[ht]
  \begin{tikzcd}[column sep=scriptsize, row sep=scriptsize]
   \phantom{F} &  \\
  H \arrow{r}{f} & F \arrow{d}[swap]{p} \\
   & H 
  \end{tikzcd} \hspace{2cm}
  \begin{tikzcd}[column sep=scriptsize, row sep=scriptsize]
    H \arrow{d}[swap]{p} & \\
    F \arrow{d}[swap]{p} & \\
    H & 
    \end{tikzcd}
      \caption{Freeness theorem fails for non-spaces.}
      \label{fig:tau_not_free}
  \end{figure}
\end{example}

To exclude these sorts of chain complexes, we make the following definition.

\begin{defn}
A map $X\ra Y$ in $\cD(\undt)$ is \dfn{spacelike} if the cofiber does
not have any $B$-type summands in its decomposition (see Theorem~\ref{th:complex-splitting}).
\end{defn}

We are now ready to define the appropriate analog of $\Rep(C_2)$-complexes.

\begin{defn}\label{complexdef}
A {\mdfn{$\Rep(C_2)$-chain complex}} is a chain complex $X$ with
a filtration $X_0\subseteq X_1\subseteq\dots \subseteq X$, where $X_0$ is either $0$ or a direct sum of copies of $H$, and $X_m$ is formed
by attaching $m$-cells to $X_{m-1}$. That is,
	\[
	X_m=\Cof\pars{\bigoplus_i\Sigma^{m-1}H(q_i)\to X_{m-1}}
	\]
where $0\leq q_i \leq m$ for each $i$. Furthermore, we require that all of the
attaching maps are spacelike.
\end{defn}

We are now ready to state a version of Kronholm's theorem in $\cD(\undt)$.

\begin{thm}\label{th:Kronholm}
Suppose $X$ is a $\Rep(C_2)$-chain complex built from finitely-many cells. Then $X$ is quasi-isomorphic to a direct sum of complexes of the form $\Sigma^{k_i}H(r_i)$ where $0\leq r_i \leq k_i$ for all $i$.
\end{thm}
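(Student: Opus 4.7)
The plan is to prove the theorem by induction on the number of representation cells in $X$.  The base case $X = X_0$ is immediate since each copy of $H$ equals $\Sigma^0 H(0)$.  For the inductive step, isolate the last representation cell $V = \Sigma^m H(q)$ with $0 \leq q \leq m$, so $X = \Cof(f)$ where $f\colon \Sigma^{m-1}H(q) \to X'$ is spacelike and $X'$ is a $\Rep(C_2)$-chain complex with one fewer cell.  By induction $X' \simeq \bigoplus_i \Sigma^{k_i}H(r_i)$ with $0 \leq r_i \leq k_i$; replace $X'$ by this direct sum as an actual chain complex (supported in degrees $[0, m-1]$) and lift $f$ to a chain map, so that $\Cof(f)$ is a specific chain complex supported in degrees $[0, m]$.

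Apply the splitting algorithm of Section~\ref{se:complex-classify-proof} to $\Cof(f)$.  By Theorem~\ref{th:complex-splitting} we obtain an isomorphism of chain complexes $\Cof(f) \iso \bigoplus_j C_j$, where each $C_j$ is a shift of $A_k$, $B_r$, $H(n)$, or a disk.  I will show that only shifts of $H(n)$ with $n \geq 0$ (plus disks) occur; the weight bound $n_j \leq k_j$ then follows automatically, since each summand $\Sigma^{k_j}H(n_j)$ has chain-level support $[k_j - n_j, k_j] \subseteq [0, m]$, which forces $k_j - n_j \geq 0$.

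The spacelike hypothesis immediately rules out any $B_r$ summand in the decomposition, and by the uniqueness statement (Proposition~\ref{pr:classify-2}) this means Step 2 of the algorithm never fires.  To exclude $A_k$ and $H(-n)$ summands, I will maintain the inductive invariant that throughout the algorithm's bottom-up processing, the already-decomposed portion $T_\ell(\Cof(f))$ contains no $A_i$-strand.  The invariant holds initially for $\ell < m - q$, where $T_\ell(\Cof(f)) = T_\ell(X')$.  Inductively, Steps 3 and 5 --- the only steps producing $H(-n)$ and $A_k$ summands --- each require the existence of an $A_i$-strand in the already-split portion, which is excluded by the invariant at the previous level.  Hence only Steps 1 and 4 fire at each level, producing disks and new $H(n)$-summands with $n \geq 1$, and the invariant persists.

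The hard part will be the detailed bookkeeping for how the base changes of Step 4 interact with $V$'s elements at intermediate levels.  Because $V$ contributes one basis element per degree in $[m-q, m]$, carrying internal $V$-differentials and cross-differentials from $f$, a case analysis is needed based on which existing $H(r_i)$-strand each $V$-element merges with.  In every case the merger produces a new strand of the form $\Sigma^\ell H(r_i + 1)$ from a top-$F$ strand $\Sigma^{\ell - 1} H(r_i)$, which preserves the invariant and respects the weight bound inherited from $r_i \leq k_i$.
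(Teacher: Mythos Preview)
Your overall inductive strategy is the same as the paper's: peel off the top cell $V$, replace $X'$ by its split form, and run the splitting algorithm on the cofiber.  The difference lies in how you control the outcome of the algorithm.  The paper does this \emph{explicitly}: it strengthens the induction hypothesis with a condition $(\ast\ast)$ on the weights (every summand $\Sigma^{k_i}H(r_i)$ has $k_i\leq m$, and if $k_i=m$ then $r_i\leq q$), and then invokes Lemma~\ref{le:maps} to replace each component $\Sigma^{-1}V\to W_i$ of the attaching map by one of three explicit model maps.  With these nice representatives the cofiber has a very transparent structure, and one can literally watch the algorithm run: either there is a type-(1) identity and the whole of $V$ peels off as disks, or there are only type-(3) maps and a single $u\colon F\to F$ appears at the top of each affected $W_i$, so Step~4 just shuffles $F$'s off of $V$ onto shorter strands.

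Your alternative---bypass Lemma~\ref{le:maps} and argue abstractly via the invariant ``no $A$-strands in $T_\ell$''---is appealing, but it has a genuine gap.  The implication ``Step~2 never fires'' from spacelike plus uniqueness is fine.  The implication ``Steps~3 and 5 require an $A$-target, hence never fire'' is fine \emph{given} the invariant.  But you never verify that Steps~1 and 4 alone actually preserve the invariant.  The danger is not in what those steps \emph{produce}; it is in what they \emph{leave behind}.  After Steps~1--5 at level $\ell$, any $F$ at level $\ell$ whose differential has become zero under the accumulated basis changes splits off as an $A_0$-strand.  You need to rule this out, and without nice representatives for $f$ the bookkeeping is delicate: basis changes at level $\ell$ mix the $V$-element with $X'$-elements, and you have to check that no cancellation ever kills a differential.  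Relatedly, without Lemma~\ref{le:maps} you have no guarantee that the only nonzero components of $f$ sit where Step~4 expects them; for instance, a component of the form $p\colon F\to H$ (from an $F$ in $V$ to a standalone $H$ at the top of some strand in $T_{\ell-1}$) is not literally covered by Steps~4--5 as written, and your argument says nothing about it.

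One more issue: you assume the single-cell attaching map $\Sigma^{-1}V\to X'$ is spacelike, but the definition only asserts this for the \emph{combined} attaching map at each filtration level.  The paper sidesteps this by attaching cells in increasing weight order within each dimension and proving the strengthened $(\ast\ast)$-statement; you should do the same, or else justify why peeling off one cell at a time preserves the spacelike condition.  In short, your invariant can be made to work, but to close the gaps you will end up recreating the control that Lemma~\ref{le:maps} and $(\ast\ast)$ provide.
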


Before giving the proof, we note the topological version of
Kronholm's theorem is an immediate corollary.

\begin{cor}[Kronholm; Hogle--May]
Let $L$ be a pointed $C_2$-space with the structure of a finite Rep$(C_2)$-complex.  Then
$\tilde{H}^{*,*}(L;\undt)$ is free as an $\M_2$-module.
\end{cor}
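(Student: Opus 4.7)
The plan is to prove this by induction on the total number of cells of $X$. The base case $X = X_0$, a direct sum of copies of $H = H(0)$, is immediate since each summand is a representation cell.

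For the inductive step, I reduce to attaching a single cell at a time. So write $X = \Cof(f)$ for a spacelike attaching map $f \colon \Sigma^{m-1}H(q) \to Y$ with $0 \leq q \leq m$, where $Y$ has one fewer cell. By the inductive hypothesis $Y$ splits as a direct sum $\bigoplus_i \Sigma^{k_i}H(r_i)$ of representation cells, and I replace $Y$ by this splitting before forming the cofiber.

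Applying Theorem~\ref{th:complex-splitting} to $\Cof(f)$ produces a decomposition into fundamental complexes, and the spacelike hypothesis immediately rules out any $B_r$ summands. It then remains to show (i) no $A_k$ summands appear, (ii) no $H(-n)$ summands (for $n > 0$) appear, and (iii) each surviving $H(n)$-summand $\Sigma^k H(n)$ satisfies $0 \leq n \leq k$. To establish these, I would run the splitting algorithm of Section~\ref{se:complex-classify-proof} explicitly on the concrete model $\Cof(f) \cong Y \oplus \Sigma^m H(q)$, whose differential is the sum of the differentials of $Y$ and of $\Sigma^m H(q)$ together with the perturbation coming from $f$. Because $Y$ is already packaged as a disjoint collection of representation cells, the only genuinely new behavior occurs at the levels where the attached cell meets the tops of existing cells. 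These interactions manifest as Kronholm-shift basis changes: a copy of $F$ may migrate from the attached cell onto a cell of $Y$ (raising the latter's weight and topological dimension by one), or conversely, while the total $F$- and $H$-counts in each degree are preserved and each resulting strand is still a representation cell.

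The main obstacle is to verify rigorously that Steps 3 and 5 of the algorithm — those that would otherwise split off $H(-n)$ or $A_k$ strands — are never triggered. For Step 3, the key observation is that the only candidate for the top $H$ of such an $H(-n)$-strand is the $H$ of $\Sigma^m H(q)$; any nonzero downward contribution from it into an $F$ of $Y$ would, together with the bottom $H$ of the struck representation cell of $Y$, complete to a $B$-type summand in the decomposition, contradicting the spacelike hypothesis. For Step 5, the analogous point is that every $F$ appearing in $\Cof(f)$ is connected by the differentials to some $H$ (either the bottom of its originating representation cell of $Y$ or the $H$ of $\Sigma^m H(q)$), so no $A_k$-strand (which would consist of $F$'s disconnected from any $H$) can be isolated by the algorithm without forcing a compensating pathology elsewhere. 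The weight bounds $0 \leq n \leq k$ for the surviving $H(n)$-summands then follow by tracking the Kronholm shifts through the algorithm: they preserve the total $F$-count in each degree and only redistribute $F$'s among cells whose topological dimensions and coweights remain admissible.
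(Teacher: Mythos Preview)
Your overall strategy coincides with the paper's: reduce to the algebraic statement (Theorem~\ref{th:Kronholm}) and prove it by induction on cells, attaching one representation cell at a time, running the splitting algorithm on the cofiber, and ruling out the unwanted strand types. However, the proposal has two genuine gaps that the paper's argument is specifically designed to close.

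First, your heuristics for eliminating $A_k$ and $H(-n)$ summands are not proofs. Saying each $F$ is ``connected by the differentials to some $H$'' is a statement about the \emph{original} basis of the cofiber, but the algorithm of Section~\ref{se:complex-classify-proof} repeatedly changes bases, and there is no invariant notion of ``connectedness'' that survives those changes. Likewise, the claim that the only candidate for the top $H$ of an $H(-n)$-strand is the $H$ from $V$ is asserted but not established; after basis changes, an $H$ coming from the bottom of some $W_i$ could in principle acquire a nonzero map to an $F$ below it. The paper does not attempt such global heuristics. Instead it proves Lemma~\ref{le:maps}, which pins down every non-null component $\Sigma^{-1}V\to W_i$ up to homotopy as one of three explicit shapes, and then traces the algorithm case by case: the identity shape yields disks, the $p$-shape is excluded by the spacelike hypothesis (this is exactly where $B$-strands would arise), and the $u$-shape produces only Kronholm shifts that keep every strand a representation cell.

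Second, Lemma~\ref{le:maps} has dimensional hypotheses ($a\geq s$, and $b\geq t$ when $a=s$) that are \emph{not} automatic from your induction hypothesis. The paper secures them by attaching cells in order of increasing topological dimension and, within a dimension, increasing weight, and by strengthening the induction to the condition (**): every split cell in $Y$ has $k_i\leq m$, and $r_i\leq q$ whenever $k_i=m$. Maintaining (**) through the inductive step is itself nontrivial---it is the last paragraph of the paper's proof, where one argues that Step~4 of the algorithm always attaches the top $F$ of $V$ to a strand of length at most $\wt(V)$. Without (**), attaching maps of forms outside Lemma~\ref{le:maps} are possible, and your case analysis would not cover them. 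Your proposal mentions neither the attachment order nor (**), and the phrase ``tracking the Kronholm shifts'' does not substitute for this bookkeeping.
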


\begin{proof}
  The object of $\cD(\undt)$ corresponding to $H\undt
  \Smash \Sigma^\infty L$ can be represented by  a Rep$(C_2)$-chain complex $X$ satisfying Definition~\ref{complexdef}.  So Theorem~\ref{th:Kronholm}
  applies and $X$ is quasi-isomorphic to $\oplus_i \Sigma^{k_i}H(r_i)$ where $0\leq r_i \leq k_i$ for all $i$.  By Proposition~\ref{pr:cohom-rings}, the bigraded cohomology of $X$ is a free $\M_2$-module and thus so is the bigraded cohomology of the $C_2$-space $L$.
  \end{proof}

Now we turn to the proofs of Theorem~\ref{th:Kronholm}. We first outline a short proof of the freeness theorem via localization.  
We thank the referee for suggesting this argument.

\begin{proof}[Proof of freeness in Theorem~\ref{th:Kronholm} via localization] Let $X$ be a finite $\Rep(C_2)$-chain complex. Theorem~\ref{th:complex-splitting} implies
that up to quasi-isomorphism $X$ is the sum of fundamental complexes. The spacelike assumptions rules out the
appearance of any $B$-type summands.  To rule out $A$-type summands, it is enough to
rule out any $\rho$-torsion in the $\tau$-localization of the cohomology. This can
be done by inducting on the dimension of the complex and using
that $\M_2[\tau^{-1}]$ is a graded PID. We outline this argument in the paragraph below.

The base case is immediate because $X_0$ is just a direct sum of copies of $H$. In the inductive step we build $X_m$ from $X_{m-1}$ by attaching $m$-cells. The attaching map induces a map on the $\tau$-localized cohomology, which leads us to analyze the map of free $\M_2[\tau^{-1}]$-modules
\[
 \tau^{-1}H^{*,*}(X_{m-1})\to \bigoplus_{\text{\# of $m$-cells}} \Sigma^{m-1}\M_2[\tau^{-1}] .
\]
The module $\tau^{-1}H^{*,*}(X_{m-1})$ will be a direct sum of free $\M_2[\tau^{-1}]$-modules with generators in topological degrees strictly less than $m$.  For degree reasons, the only summands on which this map can be nonzero are the ones generated in topological degree $m-1$. On these summands, analyzing the map of $\M_2[\tau^{-1}]$-modules reduces to analyzing a map of $\F_2$-vector spaces. 

By making a change of basis if necessary, there are only two options for a degree $m-1$ summand in the domain: the summand is in the kernel, or it is mapped isomorphically onto a degree $m-1$ summand in the codomain. Thus, using the long exact sequence on cohomology, $\tau^{-1}H^{*,*}(X_m)$ will again be a direct sum of free $\M_2[\tau^{-1}]$-modules whose generators are in topological degrees $m$ or less. In particular, $\tau^{-1}H^{*,*}(X_m)$ has no $\rho$-torsion and so neither does $H^{*,*}(X_m)$. By induction, $X$ has no $A$-type summands and is thus quasi-isomorphic to a sum of terms of the form $\Sigma^{k_i}H(r_i)$. \end{proof}

\begin{remark}
The above argument has the advantage of being short and 
clear.  However, it obscures 
how the chain complexes are glued together and how the weights shift. That is, while the localization argument guarantees the 
 result will be a direct sum of strands of the form 
 $\Sigma^{k_i}H(r_i)$, it does not give us a way to predict the values of 
 $k_i$ or $r_i$ based on our starting cells.
 
 We thus give a second proof 
 that sheds some light on the nature of the Kronholm
 shifts, by which the bigradings in the original cell structure shift
 around to become the bigradings in the splitting decomposition.  We certainly do not claim the following proof is easier or clearer, but rather that it illustrates explicitly how the weights of the attached cells get shifted.  One can see these shifts are a consequence of the splitting algorithm in the proof below.
\end{remark}

We now embark on proving Theorem~\ref{th:Kronholm} using our splitting algorithm. We again aim to induct on the dimension of a $\Rep(C_2)$-chain complex $X$.  However, it will be simpler to consider the attaching map for a single representation cell $V$.  We thus consider a map $f\colon \Sigma^{-1}V \to Y$ where $V$ is a single representation cell and by induction we assume $Y$ is split as a direct sum of representation cells.  An example is depicted in Figure~\ref{fig:attaching_single_cell}, where there is a single representation cell $\Sigma^{-1}V$ shown on the left with a (potentially complicated) attaching map $f$ to the direct sum of representation cells on the right.  Notice on the right-hand side, the cells are ordered by topological dimension, the degree of the top $F$.

\begin{figure}[ht]
  \begin{tikzcd}[column sep=tiny, row sep=scriptsize]
   F \arrow{d} & & & & & & & & F \arrow{d} & F \arrow{d} \\
   F \arrow{d} & \arrow{rrr}{f} & \phantom{F} &  & \phantom{F} & F \arrow{d} & F \arrow{d} & F \arrow{d} & F \arrow{d} & F \arrow{d} \\
   H & & & & & F \arrow{d} & H & H & F \arrow{d} & H\\
     & & & & & H & & & H &
  \end{tikzcd}
      \caption{Attaching a single representation cell.}
      \label{fig:attaching_single_cell}
  \end{figure}

If we take the cofiber of $f$, then we get a complicated complex to which we can apply the splitting algorithm from Section~\ref{se:complex-classify-proof}.  According to the algorithm, the cofiber will split into strands, but not necessarily the same vertical strands we started with.  One can picture various copies of $F$ and $H$ breaking and reattaching to each other.  However, at the end of the process, there are the same number of summands of $F$ and $H$.  

Since we have assumed the attaching map is spacelike, there will be no $B$-type strands in the final decomposition.  Our aim is to show that no strands of the form $A_k$ or $H(-n)$ appear, so there are only strands of the form $H(n)$ for $n \geq 0$.  For a nontrivial attaching map, we will find either several disks split off or some number of copies of $F$ from $V$ will transfer to other strands.  The latter will decrease the weight of the newly attached cell and increase the weights of the others.

In the course of this, we will need to
consider maps between representation cells of particular dimensions.
Up to homotopy, we may choose nice representatives for the attaching
map on each summand.  In the following lemma, we determine the
possible maps for our setting. From now on we draw chain complexes
horizontally.

\begin{lemma}\label{le:maps}
Let $V = \Sigma^{a}H(b)$ and $W = \Sigma^s H(t)$ be representation cells with $a \geq s$, and if $a = s$ then $b \geq t$.  Then for any map $\Sigma^{-1}V \to W$, exactly one of the following holds:
\begin{enumerate}
\item the cells satisfy $\tp(\Sigma^{-1}V)=\tp(W)$ and
$\cowt(\Sigma^{-1}V)=\cowt(W)$, and the map is homotopic to the identity,
\begin{center}
  \begin{tikzcd}[column sep=scriptsize, row sep=scriptsize]
    \Sigma^{-1}V \arrow{d} & & F \arrow{r} \arrow{d}{1} & F \arrow{r} \arrow{d}{1} & \cdots \arrow{r} & F \arrow{r} \arrow{d}{1} & H \arrow{d}{1} & \phantom{H} & \phantom{H}\\
    W & & F \arrow{r} & F \arrow{r} & \cdots \arrow{r} & F \arrow{r} & H & & \semicolon
  \end{tikzcd}
\end{center}
\item the cells satisfy $\tp(\Sigma^{-1}V) = \tp(W)$ and
$\cowt(\Sigma^{-1}V) > \cowt(W)$, and the map is homotopic to one of the form
\begin{center}
  \begin{tikzcd}[column sep=scriptsize, row sep=scriptsize]
     \Sigma^{-1}V \arrow{d} & & F \arrow{r} \arrow{d}{1} & F \arrow{r} \arrow{d}{1} & F \arrow{r} \arrow{d}{1} & F \arrow{r} \arrow{d}{1} & H \arrow{d}{p} & & \\
    W & & F \arrow{r} & F \arrow{r} & F \arrow{r} & F \arrow{r} & F \arrow{r} & F \arrow{r} & H \semicolon
  \end{tikzcd}
\end{center}
\item the cells satisfy $\tp(\Sigma^{-1}V) \geq \tp(W)$ and
$\cowt(\Sigma^{-1}V) \leq \cowt(W)-2$, and the map is homotopic to one of the form
\begin{center}
  \begin{tikzcd}[column sep=scriptsize, row sep=scriptsize]
     \Sigma^{-1}V \arrow{d} & & F \arrow{r} & F \arrow{r} \arrow{d}{u} & F \arrow{r} \arrow{d}{0} & F \arrow{r} \arrow{d}{0} & F \arrow{r} \arrow{d}{0} & \arrow{r} F & H \\
    W & & & F \arrow{r} & F \arrow{r} & F \arrow{r} & H & & \semicolon
  \end{tikzcd}
\end{center}
\item the map is null-homotopic.
\end{enumerate}
\end{lemma}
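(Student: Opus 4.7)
The plan is to decompose the lemma into two steps: first compute $\cD(\undt)(\Sigma^{-1}V, W)$ as an abelian group, showing it equals $\Z/2$ exactly in cases (1), (2), (3) and vanishes in every other configuration allowed by the hypothesis, and then exhibit an explicit chain-level representative of the generator in each of those three cases.

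For the Hom computation, invertibility of $H(b)$ in $\cD(\undt)$ (Proposition~\ref{pr:dbox}(iv)) together with the adjunction $\cD(\undt)(\Sigma^p H, X) = H_p(X)_\bdot$ lets me rewrite
\[
\cD(\undt)(\Sigma^{-1}V, W) \iso \cD(\undt)(\Sigma^{a-1}H(b), \Sigma^s H(t)) \iso H_{a-1-s}(H(t-b))_\bdot.
\]
I would then read $H_i(H(n))$ off Proposition~\ref{pr:strand-homology} for every $(i, n) \in \Z^2$, noting that the $\cpdot$-components of $H$, $H^{op}$, and $S\dt$ are each $\Z/2$ while $S\tht$ has $\cpdot$-component $0$. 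Setting $(i, n) = (a-1-s,\, t-b)$ and running the case analysis under the standing hypothesis ($a \geq s$, with $b \geq t$ when $a = s$), the $\Z/2$ outputs align precisely with the three regimes (1), (2), (3), while every other allowed pair yields $0$; this proves case (4).

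For the explicit representatives, case (1) is trivial since $\Sigma^{-1}V$ and $W$ coincide as complexes, so the identity works. In case (2), define $f$ to be the identity on each common top-range $F$ and $p\colon H \to F$ at the bottom degree of $\Sigma^{-1}V$; commutativity at the base reduces to $p \circ p = 1+t = u$ matching the differential $u$ of $W$ in that degree. In case (3), the inequalities $\tp(\Sigma^{-1}V) \geq \tp(W)$ and $\cowt(\Sigma^{-1}V) \leq \cowt(W)-2$ force the nontrivial part of $W$ to sit inside the nontrivial part of $\Sigma^{-1}V$; pick any degree $j$ where both complexes have an $F$ (or the adjacent degree where $W$ has its bottom $H$ when $t = 0$) and set $f_j$ equal to the unique nonzero map there, all other $f_k = 0$. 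Chain-map commutativity then follows from the identities $u \circ u = 0$ and $p \circ u = 0$ in $\Mack_{C_2}$.

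The main obstacle will be verifying that the candidate maps in cases (2) and (3) are not null-homotopic; once that is done, the $\Z/2$ Hom calculation automatically promotes each to a generator. I plan to write a generic putative chain homotopy $h$ and convert $f = dh + hd$ into a degree-by-degree system. The key inputs are the commutativity of $\End_{\undt}(F) \iso \Z/2[C_2]$ together with the augmentation identity $u \cdot h = \epsilon(h) \cdot u$, and the vanishing $p \circ u = 0$. Combined with the boundary constraints imposed by the top and bottom $H$-caps of $W$ (where the homotopy is forced to be zero or to land in a constrained hom module), the system collapses to a recurrence on the $\epsilon$-values of the $h_k$ that propagates zeros inward from both endpoints, contradicting the required jump at the prescribed degree $j$ and so ruling out any null-homotopy.
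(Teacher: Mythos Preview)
Your approach is essentially the same as the paper's: both reduce the Hom computation to the homology of $H(t-b)$ via invertibility, perform a case analysis under the standing hypotheses to match the nonzero outcomes to cases (1)--(3), and exhibit the explicit chain representatives. The only difference is that you plan a detailed degree-by-degree contradiction argument to verify non-nullity of the representatives, whereas the paper simply asserts ``one can readily check'' this; one small imprecision in your case~(3) is the phrase ``the unique nonzero map'' for $F\to F$ (there are three), but only $u$ yields a chain map with all other components zero, so your intent is clear.
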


\begin{proof}
To justify these are the only cases, we start by computing homotopy classes of maps $\cD(\undt)(\Sigma^{-1}V, W)$.  Using that $H(b)$ is invertible with inverse $H(-b)$, we find that
\begin{align*}
  \cD(\undt)(\Sigma^{-1}V, W) &\cong \cD(\undt)(\Sigma^{a-1}H(b),\Sigma^{s}H(t))\\
  &\cong \cD(\undt)(\Sigma^{a-1-s}H(b),H(t))\\
  &\cong \cD(\undt)(\Sigma^{a-1-s}H,H(t-b))\\
  &\cong \begin{cases}
    \Z/2 & \text{if } t-b \geq 0 \text{ and } -(t-b) \leq a-1-s \leq 0 \\
    \Z/2 & \text{if } t-b \leq -2 \text{ and } 0 \leq a-1-s \leq -(t-b)-2\\
    0 & \text{otherwise.}
  \end{cases}
\end{align*}
The last isomorphism follows from cases \textit{(b)--(d)} of
Proposition~\ref{pr:homotopy-compute}.  That the only nonzero value is $\Z/2$ means there is at most one nontrivial homotopy class of maps between representation cells.  One can readily check the three maps described in parts \textit{(1)--(3)} of the lemma are non-null.  It remains to show these are the only possibilities that satisfy the given constraints on $V$ and $W$. We have two cases for non-null maps: $t-b\geq 0$ or $t-b\leq -2$.

Observe in the context of representation cells $V$ and $W$, the value $t-b$ measures how many more copies of $F$ the cell $W$ has compared to $V$, i.e.\ how much longer the second strand is than the first. We now consider the two cases.\medskip

\noindent \emph{Case 1 $(t-b\geq 0)$}: When $t-b \geq 0$, the strand $V$ is the same length or shorter than $W$.  Under these circumstances, we show the constraints placed on the dimensions of the cells lead to either the identity map or the map in part \textit{(2)}.  

To get a non-null map, we must also have $a-1-s \leq 0$ or equivalently $a-1 \leq s$.  This means the top $F$ in $W$ is in the same degree or higher than (i.e.\ to the left of) the top $F$ in $\Sigma^{-1}V$.  By hypothesis, we also have $a \geq s$ or equivalently $a-1-s \geq -1$.  We see there are only two possibilities: $a-1-s = 0$ and the strands begin at the same place, or $a-1-s = -1$ and there is a single $F$ in $W$ above the start of $\Sigma^{-1}V$.

Suppose $a-1-s=0$, so the strands begin in the same degree.  If the strands are the same length, we have an easy choice of non-null map: the identity.  Otherwise, $W$ is longer than $\Sigma^{-1}V$.  In that case, we may assume the map is of the form in part \textit{(2)} of the lemma, having identity maps between copies of $F$ and $p \colon H \to F$ in the degree of $\cowt(V)$, as below:
\begin{center}
  \begin{tikzcd}[column sep=scriptsize, row sep=scriptsize]
    \Sigma^{-1}V \arrow{d} & & F \arrow{r} \arrow{d}{1} & F \arrow{r} \arrow{d}{1} & \cdots \arrow{r} & F \arrow{r} \arrow{d}{1} & H \arrow{d}{p} & & \\
    W & & F \arrow{r} & F \arrow{r} & \cdots \arrow{r} & F \arrow{r} & F \arrow{r} & F \arrow{r} & H.
  \end{tikzcd}
\end{center}
This map is not null and thus represents the nonzero homotopy class.  Notice here $\tp(\Sigma^{-1}V)=\tp(W)$ and the restrictions on the lengths of the strands mean $\cowt(\Sigma^{-1}V) > \cowt(W)$, as desired.

Lastly, consider when $a-1-s = -1$. We show there are no maps that
satisfy the hypotheses of the lemma. In this case we have $a=s$, for
which the lemma specifies that we must have $b \geq t$.  On the other
hand, $t \geq b$ in this case.  Thus $t = b$, but this contradicts
that $-(t-b) \leq a-1-s$ since $0$ is not less than $-1$.
\medskip

\noindent\emph{Case 2 $(t-b\leq -2)$}: This inequality implies
$\Sigma^{-1}V$ has at least two more nonzero terms than
$W$. Rearranging the two inequalities from the homotopy class
calculation, $0 \leq a-1-s$ and $a-1-s \leq -(t-b)-2$, we immediately
find $s \leq a-1$ and $(a-1)-b \leq s-t-2$.  That is,
$\tp(W)\leq \tp(\Sigma^{-1}V)$ 
and $\cowt(\Sigma^{-1}V) \leq \cowt(W)-2$.  A map of the form in part \textit{(3)} of the lemma satisfies these constraints and is not null, and thus is a representative of the unique nontrivial homotopy class.

\end{proof}

\begin{remark}
Observe that each of the non-null maps in Lemma~\ref{le:maps} has a
distinct lowest degree (right-most) nonzero component: either $\id$, $p$, or
$u$, corresponding to the cases (1)--(3).    This will be important in
our application below.
\end{remark}

We are now ready to prove Kronholm's theorem in the chain
complex setting using the splitting algorithm.

\begin{proof}[Proof of Theorem~\ref{th:Kronholm} via splitting]

We induct on the number of representation cells in the
$\Rep(C_2)$-chain complex $X$.  To build $X$, instead of attaching all
the $m$-cells at once, attach representation cells one at a time in
order of increasing topological dimension, and within each topological
dimension in order of increasing weight.  We attach the cells in order of
increasing weight to have better control of the attaching maps.  This will allow us to reduce to studying the attaching maps from Lemma~\ref{le:maps}.

In order to maintain control of the attaching maps, our induction will prove a slightly stronger result than in the
statement of the theorem.  We prove that if $X$ is obtained by
attaching an $(m,q)$-representation cell $V$ to a $\Rep(C_2)$-chain
complex $Y$ where $Y\he \oplus_i \Sigma^{k_i}H(r_i)$ with $0\leq
r_i\leq k_i$ {\bf and also satisfying} 
\begin{equation} 
\tag{**}
\text{$k_i\leq m$, and if $k_i=m$ then $r_i\leq q$}
\end{equation}
for all $i$, then $X$ is quasi-isomorphic to a direct sum of
representation cells satisfying the same inequalities as in (**).  In particular, any cells of dimension $m$ have weight no more than $q$. 
The base case where $Y$ has no cells is trivial.  

We need to analyze the attaching map $f\colon \Sigma^{m-1}H(q) \to Y$.  Since
$Y$ is quasi-isomorphic to $\oplus_i \Sigma^{k_i}H(r_i)$ and $Y$ is
cofibrant, while $\oplus_i \Sigma^{k_i}H(r_i)$ is fibrant, in the
projective model structure on $\Ch(\undt\MMod)$, there must be a
quasi-isomorphism $g\colon Y \to \oplus_i \Sigma^{k_i}H(r_i)$.  So we
can instead consider the attaching map $gf$
\[
\Sigma^{m-1}H(q) \to \oplus_i \Sigma^{k_i}H(r_i),  
\]
whose cofiber will still be quasi-isomorphic to $X$.  Thus, we can just
replace $Y$ with the direct sum in the codomain.
For convenience, let $W_i$ denote the summand
$\Sigma^{k_i}H(r_i)$, so $X$ is quasi-isomorphic to the cofiber of
$\Sigma^{-1}V \to \oplus_i W_i$.  We will apply the splitting
algorithm from Section~\ref{se:complex-classify-proof} to the cofiber.
Recall that the algorithm proceeds from lowest degree nonzero term to
highest, or from right to left when we draw complexes horizontally.

If the map $\Sigma^{-1}V \to \oplus_i W_i$ is null-homotopic, then the
cofiber immediately splits as the direct sum $V \oplus \left(\oplus_i
W_i\right)$.  The condition (**) on the cells is immediate.

The more interesting situation arises from a non-null map.  Projecting
onto each summand, we get maps of the form $\Sigma^{-1}V \to W_i$.
Since the cells $W_i$ satisfy condition (**) by assumption, we can apply Lemma~\ref{le:maps} letting $W=W_i$.  Thus there are only
three non-null possibilities (up to homotopy) for the map $\Sigma^{-1}V \to W_i$; as in
Lemma~\ref{le:maps} we label these possibilities (1)--(3).  Choose
these nice representatives for each non-null map and the zero map for
any null portion.  As the splitting algorithm proceeds from bottom to
top, we will want to consider the lowest degrees first. Recall the
lowest degree nonzero map for each representative is: $\id \colon H
\to H$ for case \textit{(1)}, $p \colon H
\to F$ in case \textit{(2)},  and  $u \colon F \to F$
in case \textit{(3)}.

Now consider $\Cof(\Sigma^{-1}V \to \oplus_i W_i)$ and apply the
splitting algorithm.  There are no maps between the $W_i$, so reading
from right to left the complex is split into strands until degree
$\cowt(V)$.  In degree $\cowt(V)$, the bottom $H$ from $V$ supports
either the identity map, $p$, or the zero map into the various strands.  If that copy of $H$
supports a nonzero map, we begin to apply the splitting algorithm
here.

The splitting algorithm considers isomorphisms first and uses them to split off disks.  So if the bottom $H$ in $V$ supports the identity map to a copy of $H$ in any of the summands, then for at least one $i$ there is a map of the form $\id: V \to W_i$.  An example of the cofiber (omitting other strands for brevity) is pictured below:
\begin{center}
  \begin{tikzcd}[column sep=scriptsize, row sep=scriptsize]
     V & F \arrow{r}{u} \arrow{dr}{1} & F \arrow{r}{u} \arrow{dr}{1} & \cdots \arrow{r}{u} & F \arrow{r}{p} \arrow{dr}{1} & H \arrow{dr}{1}  \\
    W_i & & F \arrow{r}[swap]{u} & F \arrow{r}[swap]{u} & \cdots \arrow{r}[swap]{u} & F \arrow{r}[swap]{p} & H.
  \end{tikzcd}
\end{center}
Regardless of whether the terms from $V$ support other nonzero maps,
in each degree the algorithm prioritizes the identity maps between
terms and we can choose to use the identity maps to $W_i$.  The
algorithm will split off many disks, as many as the length of $V$.
After the change of bases, no terms from $V$ will support nonzero
maps, so the remainder of the complex will be split as it was in $Y$.
Thus $X$ will be quasi-isomorphic to $Y$ but with one strand ($W_i$)
removed.  As before, none of the remaining strands have changed so
the inequalities from (**)  still hold.\medskip

Now suppose there are no identity maps supported by the bottom $H$ from $V$.  If that $H$ supports a nonzero map, it must be $p\colon H \to F$.  We will ultimately see this is not possible.  If it were, there would be at least one summand $W_i$ with a map $V \to W_i$ of the form in part \textit{(2)} of Lemma~\ref{le:maps}.  An example of the cofiber (again omitting other strands) is pictured below:
\begin{center}
  \begin{tikzcd}[column sep=scriptsize, row sep=scriptsize]
    V & F \arrow{r}{u} \arrow{dr}{1} & F \arrow{r}{u} \arrow{dr}{1} & F \arrow{r}{u} \arrow{dr}{1} & F \arrow{r}{u} \arrow{dr}{1} & H \arrow{dr}{p} & & \\
    W_i & & F \arrow{r}[swap]{u} & F \arrow{r}[swap]{u} & F \arrow{r}[swap]{u} & F \arrow{r}[swap]{u} & F \arrow{r}[swap]{u} & F \arrow{r}[swap]{p} & H.
  \end{tikzcd}
\end{center}
In the degree of $\cowt(V)$, the algorithm prioritizes the map $p \colon H \to F$.  Applying the algorithm would split off a $B$-type summand ($B_1$ in the example pictured).  By assumption, all the attaching maps in our complex are spacelike, contradicting that there are any $B$-type summands.  Thus the bottom $H$ from $V$ cannot support any nonzero maps to other summands other than the identity, and hence there are no maps of the form in part \textit{(2)} of Lemma~\ref{le:maps}. \medskip

Having dealt with these possibilities, we may now assume the bottom $H$ from $V$ does not support any nonzero maps, and any remaining non-null maps supported by this strand are of the form in part \textit{(3)} of Lemma~\ref{le:maps}.  Reading from bottom to top (i.e.\ right to left) in the cofiber, the strands will be split until we encounter a copy of $F$ from $V$ supporting the map $u \colon F \to F$.  Here Step 4 of the algorithm splits the strands by attaching the $F$ from $V$ to the shortest strand ending in $H$.

An example of the cofiber of such a map $\Sigma^{-1}V \to W_i$ (omitting other strands as usual) is depicted below:
\begin{center}
  \begin{tikzcd}[column sep=scriptsize, row sep=scriptsize]
    V & F \arrow{r} & F \arrow{r} \arrow{dr}[swap]{u} & F \arrow{r}  & F \arrow{r}  & F \arrow{r} & \arrow{r} F & H \\
    W_i & & & F \arrow{r} & F \arrow{r} & F \arrow{r} & H & \period
  \end{tikzcd}
\end{center}
After the splitting, the resulting strands for this example are
\begin{center}
  \begin{tikzcd}[column sep=scriptsize, row sep=scriptsize]
    F \arrow{r} & F \arrow{dr} & F \arrow{r}  & F \arrow{r}  & F \arrow{r} & \arrow{r} F & H \\
    & & F \arrow{r} & F \arrow{r} & F \arrow{r} & H & \period
  \end{tikzcd}
\end{center}
Observe that after the splitting, there are again two representation cells with topological dimensions $\tp(V)$ and $\tp(W_i)$.  Moreover, attaching the $F$ to the second strand changes the weights.  The weight of the first strand decreases by the same amount the weight of the second strand increases.

Of course, there may be many non-null maps of the form in part \textit{(3)}.  Continue reading from right to left until all strands have been split according to the algorithm.  The strand $V$ is finite so this process eventually terminates. \medskip

We give an example to illustrate these last steps in more detail.  In the example, we only show the shortest strands at each stage, since the algorithm will effectively ignore any longer ones.  Alternatively, using a diagonal change of basis for the cofiber, one may assume the copies of $F$ from $V$ support a nonzero map to at most one strand of each topological dimension. The example is shown below:
\begin{center}
  \begin{tikzcd}[column sep=scriptsize, row sep=scriptsize]
    V  & F \arrow{r} \arrow{dddr}[swap]{u} & F \arrow{r} \arrow{ddr}[swap]{u} & F \arrow{r} \arrow{dr}[swap]{u} & F \arrow{r}  & F \arrow{r}  & F \arrow{r} & \arrow{r} F & H \\
    W_{i_1} & & & & F \arrow{r} & F \arrow{r} & F \arrow{r} & H &\\
    W_{i_2} & & & F \arrow{r} & F \arrow{r} & H & & & \\
    W_{i_3} & & F \arrow{r} & F \arrow{r} & F \arrow{r} & F \arrow{r} & F \arrow{r} & H & \period
  \end{tikzcd}
\end{center}
At the first stage, the fifth copy of $F$ from $V$ (reading right to left) moves to the second strand, as in the previous example:
\begin{center}
    \begin{tikzcd}[column sep=scriptsize, row sep=scriptsize]
      \phantom{V} & F \arrow{r} \arrow{dddr}[swap]{u} & F \arrow{r} \arrow{ddr}[swap]{u} & F  \arrow{dr} & F \arrow{r}  & F \arrow{r}  & F \arrow{r} & \arrow{r} F & H \\
      \phantom{W_{i_1}} & & & & F \arrow{r} & F \arrow{r} & F \arrow{r} & H &\\
      \phantom{W_{i_2}} & & & F \arrow{r} & F \arrow{r} & H & & & \\
      \phantom{W_{i_3}} & & F \arrow{r} & F \arrow{r} & F \arrow{r} & F \arrow{r} & F \arrow{r} & H & \period
    \end{tikzcd}
  \end{center}
Then the algorithm attaches the next $F$ from $V$ onto the shortest strand, which is now the third strand pictured:
\begin{center}
    \begin{tikzcd}[column sep=scriptsize, row sep=scriptsize]
      \phantom{V} & F \arrow{r} \arrow{dddr}[swap]{u} & F  \arrow{ddr} & F  \arrow{dr} & F \arrow{r}  & F \arrow{r}  & F \arrow{r} & \arrow{r} F & H \\
      \phantom{W_{i_1}} & & & & F \arrow{r} & F \arrow{r} & F \arrow{r} & H &\\
      \phantom{W_{i_2}} & & & F \arrow{r} & F \arrow{r} & H & & & \\
      \phantom{W_{i_3}} & & F \arrow{r} & F \arrow{r} & F \arrow{r} & F \arrow{r} & F \arrow{r} & H & \period
    \end{tikzcd}
  \end{center}
Finally, the remaining $F$ from $V$ attaches to the third strand since it is shorter than the fourth:
\begin{center}
    \begin{tikzcd}[column sep=scriptsize, row sep=scriptsize]
      \phantom{V} & F \arrow{r} & F  \arrow{ddr} & F  \arrow{dr} & F \arrow{r}  & F \arrow{r}  & F \arrow{r} & \arrow{r} F & H \\
      \phantom{W_{i_1}} & & & & F \arrow{r} & F \arrow{r} & F \arrow{r} & H &\\
      \phantom{W_{i_2}} & & & F \arrow{r} & F \arrow{r} & H & & & \\
      \phantom{W_{i_3}} & & F \arrow{r} & F \arrow{r} & F \arrow{r} & F \arrow{r} & F \arrow{r} & H & \period
    \end{tikzcd}
  \end{center}

At the end of this process we have split the cofiber into a number of strands, though they may have different lengths than the original strands.  In any case, all of the strands are of the form $\Sigma^k H(r)$ for various $k$ and $r$ satisfying $0 \leq r \leq k$.  Thus $X$ is quasi-isomorphic to a direct sum of representation cells as desired.

It only remains to verify that the inequalities from (**) are satisfied by $X$.  In fact, since the copies of $F$ from $V$ have simply
detached and reattached, the collection of topological dimensions of
the strands is preserved (as well as the total weight).  So all the
topological dimensions are still less than or equal to $m$.  Note that
by Lemma~\ref{le:maps}, $\Sigma^{-1}V$ does not admit a non-null map to any of the
summands in $Y$ having topological dimension $m$; so these cells are
unaffected in the cofiber $X$ and still have weights less than or
equal to $q$ by (**).  The only other cell in $X$ of topological dimension $m$
is the one onto which the 
top $F$ from $V$ gets attached.  According to Step 4 of the algorithm, which
at each iteration chooses the shortest strand ending in $H$ to split off, this top $F$ from $V$
must have attached to a strand no longer than the original $V$.  So
that strand contributes a representation cell with weight at most $q$,
and thus (**) is still satisfied.  This completes the
induction.
\end{proof}

\newpage


\bibliographystyle{amsalpha}

\end{document}